\newcommand{\EE}{\mbox{\bf E}\,}
\newcommand{\PP}{\mathbb{\bf P}\,}
\newcommand{\R}{\mathbb{R}}
\newcommand{\C}{\mathbb{C}}
\newcommand{\HH}{\mathbb{H}}
\newcommand{\N}{\mathbb{N}}
\newcommand{\D}{\mathbb{D}}
\newcommand{\TT}{\mathbb{T}}
\newcommand{\Ree}{\mbox{Re}\,}
\newcommand{\Imm}{\mbox{Im}\,}
\newcommand{\pa}{\partial}
\newcommand{\no}{\noindent}
\newcommand{\BGE}{\begin{equation}}
\newcommand{\BGEN}{\begin{equation*}}
\newcommand{\EDE}{\end{equation}}
\newcommand{\EDEN}{\end{equation*}}
\def\til{\widetilde}
\def\ha{\widehat}
\def\sem{\setminus}
\def\lin{\overline}
\def\conf{\stackrel{\rm Conf}{\twoheadrightarrow}}
\def\luto{\stackrel{\rm l.u.}{\longrightarrow}}
\def\dto{\stackrel{\rm Cara}{\longrightarrow}}
\DeclareMathOperator{\ccap}{cap} 
 \DeclareMathOperator{\diam}{diam}
\DeclareMathOperator{\dist}{dist} \DeclareMathOperator{\dcap}{dcap}
\DeclareMathOperator{\hcap}{hcap} \DeclareMathOperator{\id}{id}
 \DeclareMathOperator{\SLE}{SLE}
\newtheorem{Theorem}{Theorem}[section]
\newtheorem{Lemma}[Theorem]{Lemma}
\newtheorem{Definition}[Theorem]{Definition}
\newtheorem{Corollary}[Theorem]{Corollary}
\newtheorem{Proposition}[Theorem]{Proposition}
\numberwithin{equation}{section}
\begin{document}
\title{Ergodicity of the tip of an SLE curve}
\author{Dapeng Zhan\footnote{Research partially supported by NSF grants DMS-1056840 and Sloan fellowship}}
\affil{Michigan State University}
\maketitle
\abstract{We first prove that, for $\kappa\in(0,4)$, a whole-plane SLE$(\kappa;\kappa+2)$ trace stopped at a fixed capacity time satisfies reversibility. We then use this reversibility result to prove that, for $\kappa\in(0,4)$, a chordal SLE$_\kappa$ curve stopped at a fixed capacity time can be mapped conformally to the initial segment of a whole-plane SLE$(\kappa;\kappa+2)$ trace. A similar but weaker result holds for radial SLE$_\kappa$. These results are then used to study the ergodic behavior of an SLE curve near its tip point at a fixed capacity time. The proofs rely on the symmetry of backward SLE weldings and conformal removability of SLE$_\kappa$ curves for $\kappa\in(0,4)$.}

\section{Introduction}
The Schramm-Loewner Evolution $\SLE_{\kappa}$, introduced by Oded Schramm, generates random curves in plane domains, which are the scaling limits of a number of critical two dimensional lattice models. Many work have been done to prove the convergence of various discrete models to SLE with different parameters $\kappa$. It is also interesting to study the geometric properties of the SLE curves.

The current paper focuses on studying the tips of two versions of SLE: chordal SLE and radial SLE at some fixed capacity time.
There were previous work on the tips of SLE, e.g., \cite{tip-fractal}, in which the multifractal spectrum of the SLE tip is studied.
This paper studies the ergodic property of the SLE near its tip. Consider an SLE$_\kappa$ ($\kappa\in(0,4)$) curve $\beta$, which is a simple curve. Let $h:[0,1]\to [-\infty,T]$ be a decreasing function such that $h(1)=-\infty$ and the (logarithm) capacity of $\beta([t,1])$ is $h(t)$. Then we claim that the harmonic measure of a particular side of $\beta([h^{-1}(t),1])$ changes in an ergodic way as $t\to -\infty$. See Section \ref{ergodicity}.

We will use results about backward SLE derived in \cite{RZ}. The traditional chordal or radial SLE$_\kappa$ is defined by solving a chordal or radial Loewner equation driven by $\sqrt\kappa B(t)$. Adding a minus sign to the (forward) Loewner equations, we get the backward Loewner equations. The backward chordal or radial SLE$_\kappa$ is then defined by solving a backward chordal or radial Loewner equation driven by $\sqrt\kappa B(t)$.


The backward radial SLE$(\kappa;\rho)$ processes resemble the forward radial SLE$(\kappa;\rho)$ processes, and play an important role in this paper. If $\kappa\in(0,4]$ and $\rho\le -\frac\kappa 2-2$, a backward radial SLE$(\kappa;\rho)$ process induces a random welding $\phi$, which is an involution (an auto homeomorphism whose inverse is itself) of the unit disc with exactly two fixed points such that for $w\ne z$, $w=\phi(z)$ iff $f_t(z)=f_t(w)$ when $t$ is big enough, where $(f_t)$ are the solutions of the backward Loewner equation. It is proven in \cite{RZ} that, for $\kappa\in(0,4]$, there is a coupling of two different backward radial SLE$(\kappa;-\kappa-6)$ processes, which induce the same welding.

In Section \ref{normalized} of this paper, we use a limit procedure to define a normalized backward radial SLE$(\kappa;\rho)$ trace, and prove that, up to a reflection about the unit circle, it agrees with the forward whole-plane SLE$(\kappa;-4-\rho)$ curve (Theorem \ref{cor-whole}). Using the symmetry of backward radial SLE$(\kappa;-\kappa-6)$ welding together with the conformal removability of SLE$_\kappa$ curves, we prove in Section \ref{image} that, for $\kappa\in(0,4)$, a whole-plane SLE$(\kappa;\kappa+2)$ curve stopped at time $0$ satisfies reversibility (Theorem \ref{reversibility-whole}). This result is then used to prove that, for $\kappa\in(0,4)$, a forward chordal SLE$_\kappa$ curve stopped at a fixed capacity time can be mapped conformally to an initial segment of a whole-plane SLE$(\kappa;\kappa+2)$ curve, and the same is true up to a change of the probability measure for a forward radial SLE$_\kappa$ (Theorems \ref{Thm2} and \ref{Thm-R}). In Section \ref{ergodicity}, we use the above conformal relations to derive ergodic properties of a chordal or radial SLE$_\kappa$ curves at a fixed capacity time (Theorem \ref{chordal-radial-ergodicity}).

Throughout this paper, we use the following symbols and notation. Let $\ha\C=\C\cup\{\infty\}$, $\D=\{z\in\C:|z|<1\}$,  $\D^*=\ha\C\sem\lin\D$,
$\TT=\{z\in\C:|z|=1\}$, and $\HH=\{z\in\C:\Imm z>0\}$. Let $\cot_2(z)=\cot(z/2)$ and $\sin_2(z)=\sin(z/2)$. Let $I_\TT(z)=1/\lin z$ be the reflections about $\TT$, respectively. By an interval on $\TT$, we mean a connected subset of $\TT$. We use $B(t)$ to denote a standard real Brownian motion. We use $C(J)$ to denote the space of real valued continuous functions on $J$. By $f:D\conf E$ we mean that $f$ maps $D$ conformally onto $E$. By $f_n\luto f$ in $U$ we mean that $f_n$ converges to $f$ locally uniformly in $U$.
\vskip 4mm
\no{\bf Acknowledgements.} I  would like to thank Gregory Lawler for helpful discussions about radial Bessel processes.

\section{Loewner Equations} \label{Loewner}
\subsection{Forward equations}
We review the definitions and basic facts about (forward) Loewner equations. The reader is referred to \cite{LawSLE} for details.

A set $K$ is called an $\HH$-hull if it is a bounded relatively closed subset of $\HH$, and $\HH\sem K$ is simply connected. For every $\HH$-hull $K$, there is a unique $g_K:\HH\sem K\conf \HH$ such that $g_K(z)-z\to 0$ as $z\to \infty$. The number $\hcap(K):=\lim_{z\to\infty} z(g_K(z)-z)$ is always nonnegative, and is called the half plane capacity of $K$. A set $K$ is called a $\D$-hull if it is a relatively closed subset of $\D$, does not contain $0$, and $\D\sem K$ is simply connected. For every $\D$-hull $K$, there is a unique $g_K:\D\sem K\conf \D$ such that $g_K(0)=0$ and $g_K'(0)>0$. The number $\dcap(K):=\log(g_K'(0))$ is always nonnegative, and is called the disc capacity of $K$. A set $K$ is called a $\C$-hull if it is a connected compact subset of $\C$ such that $\C\sem K$ is connected. For every $\C$-hull with more than one point, $\ha\C\sem K$ is simply connected, and there is a unique $g_K:\ha\C\sem K\conf \D^*$ such that $g_K(\infty)=\infty$ and $g_K'(\infty):=\lim_{z\to\infty} z/g_K(z)>0$. The real number $\ccap(K):=\log(g_K'(\infty))$ is called the whole-plane capacity of $K$. In either of the three cases, let $f_K=g_K^{-1}$.

Let $\lambda\in C([0,T))$, where $T\in(0,\infty]$. The chordal Loewner equation driven by $\lambda$ is
$$\pa_t g_t(z)=\frac{2}{g_t(z)-\lambda(t)},\quad 0\le t<T;\quad \quad g_0(z)=z.$$ 
The radial Loewner equation driven by $\lambda$ is
$$\pa_t g_t(z)=g_t(z)\frac{e^{i\lambda(t)}+g_t(z)}{e^{i\lambda(t)}-g_t(z)},\quad 0\le t<T;\quad g_0(z)=z. $$ 
Let $g_t$, $0\le t<T$, be the solutions of the chordal (resp.\ radial) Loewner equation.
For each $t\in[0,T)$, let $K_t$ be the set of $z\in\HH$ (resp.\ $\in\D$) at which $g_t$ is not defined. Then for each $t$, $K_t$ is an $\HH$(resp.\ $\D$)-hull with $\hcap(K_t)=2t$ (resp.\ $\dcap(K_t)=t$) and $g_{K_t}=g_t$. We call $g_t$ and $K_t$, $0\le t<T$,  the chordal (resp.\ radial) Loewner maps and hulls driven by $\lambda$.
We say that the process generates a chordal (resp.\ radial) trace $\beta$ if each $g_t^{-1}$ extends continuously to $\lin\HH$ (resp.\ $\lin\D$), and $\beta(t):= g_t^{-1}(\lambda(t))$ (resp.\ $:=g_t^{-1}(e^{i\lambda(t)})$), $0\le t<T$, is a continuous curve in $\lin\HH$ (resp.\ $\lin\D$). If the chordal (resp.\ radial) trace $\beta$ exists, then for each $t$, $K_t$ is the $\HH$-hull generated by $\beta([0,t])$, i.e., $\HH\sem K_t$ (resp.\ $\D\sem K_t$) is the component of $\HH\sem \beta([0,t])$ (resp.\ $\D\sem\beta([0,t])$) which is unbounded (resp.\ contains $0$).  Note that $\beta(0)=\lambda(0)\in\R$ (resp.\ $=e^{i\lambda(0)}\in\TT$). The trace $\beta$ is called $\HH$-simple (resp.\ $\D$-simple) if it has no self-intersections and intersects $\R$ (resp.\ $\TT$) only at its one end point, in which case we have $K_t=\beta((0,t])$ for $0\le t<T$. Since $\hcap(K_t)=2t$ (resp.\ $\dcap(K_t)=t$) for all $t$, we say that the chordal (resp.\ radial) trace is parameterized by the half-plane (resp.\ disc) capacity.


A simple property of the chordal (resp.\ radial) Loewner process is the translation (resp.\ rotation) symmetry. Let $C\in\R$ and $\lambda^*=\lambda+C$. Let $g^*_t$ and $K^*_t$ be the chordal (resp.\ radial) Loewner maps and hulls driven by $\lambda^*$. Then $K^*_t=C+K_t$ and $g^*_t(z)=C+g_t(z-C)$ (resp.\ $K^*_t=e^{iC} K_t$ and $g^*_t(z)=e^{iC}g_t(z/e^{iC})$). If $\lambda$ generates a chordal (resp.\ radial) trace $\beta$, then $\lambda^*$ also generates a chordal (resp.\ radial) trace $\beta^*$ such that $\beta^*=C+\beta$ (resp.\ $=e^{iC}\beta$).

Let $\kappa>0$. The chordal (resp.\ radial) SLE$_\kappa$ is defined by solving the chordal (resp.\ radial) Loewner equation with $\lambda(t)=\sqrt\kappa B(t)$, and the process a.s.\ generates a chordal (resp.\ radial) trace, which is $\HH$(resp.\ $\D$)-simple if $\kappa\in(0,4]$.

Let $T\in\R$ and $\lambda\in C((-\infty,T])$. The whole-plane Loewner equation driven by $\lambda$ is
$$\left\{\begin{array}{ll} \pa_t g_t(z)=g_t(z)\frac{e^{i\lambda(t)}+g_t(z)}{e^{i\lambda(t)}-g_t(z)},&t\le T;\\
\lim_{t\to-\infty} e^t g_t(z)=z,&z\ne 0.
\end{array}\right.$$
It turns out that the family $(g_t)$ always exists, and is uniquely determined by $(e^{i\lambda(t)})$. Moreover, there is an increasing family of $\C$-hulls $(K_t)_{-\infty<t\le T}$ in $\C$ with $\bigcap_t K_t=\{0\}$ such that $\ccap(K_t)=t$ and $g_{K_t}=g_t$. We call $g_t$ and $K_t$, $-\infty<t<T$, the whole-plane Loewner maps and hulls driven by $\lambda$. We say that the process generates a whole-plane trace $\beta$ if each $g_t^{-1}$ extends continuously to $\lin{\D^*}$, and $\beta(t):= g_t^{-1}(e^{i\lambda(t)})$, $-\infty< t<T$, is a continuous curve in $\C$. If the whole-plane trace $\beta$ exists, then it extends continuously to $[-\infty,T]$ with $\beta(-\infty)=0$, and for every $t$, $\C\sem K_t$ is the unbounded component of $\C\sem\beta([-\infty,t])$. If $\beta$ is a simple curve, then $K_t=\beta([-\infty,t])$ for every $t$. So we say that the whole-plane trace is parameterized by the whole-plane capacity.


\subsection{Backward equations}
Now we review the definitions and basic facts about backward Loewner equations. The reader is referred to \cite{RZ} for details.

Let $T\in(0,\infty]$ and  $\lambda\in C([0,T))$. The backward chordal Loewner equation driven by $\lambda$ is
$$ \pa_t f_t(z)=-\frac{2}{f_t(z)-\lambda(t)},\quad 0\le t<T; \quad f_0(z)=z. $$ 
The backward radial Loewner process driven by $\lambda$ is
$$ \pa_t f_t(z)=-f_t(z)\frac{e^{i\lambda(t)}+f_t(z)}{e^{i\lambda(t)}-f_t(z)},\quad 0\le t<T;\quad f_0(z)=z. $$ 
Let $f_t$, $0\le t<T$, be the solutions of the backward chordal (resp.\ radial) Loewner equation.
Let $L_t=\HH\sem f_t(\HH)$ (resp.\ $\D\sem f_t(\D)$), $0\le t<T$. Then every $L_t$ is an $\HH$(resp.\ $\D$)-hull with $\hcap(L_t)=2t$ (resp.\ $\dcap(L_t)=t$) and $f_{L_t}=f_t$. We call $f_t$ and $L_t$, $0\le t<T$, the backward chordal (resp.\ radial) Loewner maps and hulls driven by $\lambda$.

Define a family of maps $f_{t_2,t_1}$, $t_1,t_2\in[0,T)$, such that, for any fixed $t_1\in[0,T)$ and $z\in\ha\C\sem\{\lambda(t_1)\}$, the function $t_2\mapsto f_{t_2,t_1}(z)$ is the solution of the first (resp.\ second) equation below (with the maximal definition interval):
$$ \pa_{t_2} f_{t_2,t_1}(z)=-\frac{2}{f_{t_2,t_1}(z)-\lambda(t_2)},\quad f_{t_1,t_1}(z)=z; $$ 
\BGE \pa_{t_2} f_{t_2,t_1}(z)=-f_{t_2,t_1}(z)\frac{e^{i\lambda(t_2)}+f_{t_2,t_1}(z)}{e^{i\lambda(t_2)}-f_{t_2,t_1}(z)},\quad f_{t_1,t_1}(z)=z.\label{radial-backward-2}\EDE
We call $(f_{t_2,t_1})$ the backward chordal (resp.\ radial) Loewner flow driven by $\lambda$. Note that we allow that $t_2$ to be smaller than $t_1$ if $t_1>0$. If $t_2\ge t_1$, $f_{t_2,t_1}$ is defined on the whole $\HH$ (resp.\ $\D$); and this is not the case if $t_2<t_1$.
The following lemma is obvious.

\begin{Lemma}
\begin{enumerate}
  \item[(i)] For any $t_1,t_2,t_3\in[0,T)$, $f_{t_3,t_2}\circ f_{t_2,t_1}$ is a restriction of $f_{t_3,t_1}$. In particular, this implies that $f_{t_1,t_2}=f_{t_2,t_1}^{-1}$.
  \item[(ii)] For any fixed $t_0\in[0,T)$, $f_{t_0+t,t_0}$, $0\le t<T-t_0$, are the backward chordal (resp.\ radial) Loewner maps driven by $\lambda(t_0+t)$, $0\le t<T-t_0$. Especially, $f_{t,0}=f_t$, $0\le t<T$.
  \item[(iii)] For any fixed $t_0\in[0,T)$, $f_{t_0-t,t_0}$, $0\le t\le t_0$, are the forward chordal (resp.\ radial) Loewner maps driven by $\lambda(t_0-t)$, $0\le t\le t_0$.
\end{enumerate} \label{ft2t1}
\end{Lemma}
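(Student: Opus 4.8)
The plan is to deduce all three statements from the uniqueness theorem for ordinary differential equations, applied to the right-hand sides of the backward Loewner equations. Write $F(t,w)=-2/(w-\lambda(t))$ in the chordal case and $F(t,w)=-w\,(e^{i\lambda(t)}+w)/(e^{i\lambda(t)}-w)$ in the radial case. In both cases $F$ is continuous in $t$ and analytic in $w$ off the moving singularity ($w=\lambda(t)$, resp.\ $w=e^{i\lambda(t)}$), so for each $t_1\in[0,T)$ and each admissible $z$ there is a unique maximal solution $t_2\mapsto f_{t_2,t_1}(z)$ of $\pa_{t_2}f_{t_2,t_1}(z)=F(t_2,f_{t_2,t_1}(z))$ with $f_{t_1,t_1}(z)=z$; this is exactly how $(f_{t_2,t_1})$ was defined. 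Each of (i)--(iii) is then an instance of the elementary flow- and reparametrization-calculus for this single ODE.

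For (i), fix $t_1,t_2$ and a $z$ for which $w:=f_{t_2,t_1}(z)$ is defined. Viewed in its first index, $t_3\mapsto f_{t_3,t_2}(w)$ solves $\pa_{t_3}(\cdot)=F(t_3,\cdot)$ and equals $w$ at $t_3=t_2$; but $t_3\mapsto f_{t_3,t_1}(z)$ solves the same equation and also equals $f_{t_2,t_1}(z)=w$ at $t_3=t_2$. Uniqueness forces $f_{t_3,t_2}(f_{t_2,t_1}(z))=f_{t_3,t_1}(z)$ for every $t_3$ at which the left side is defined. Since the composition can have a strictly smaller domain than $f_{t_3,t_1}$ --- this is the one point to watch, as $f_{t_2,t_1}$ is defined on all of $\HH$ (resp.\ $\D$) only when $t_2\ge t_1$ --- the conclusion is precisely that $f_{t_3,t_2}\circ f_{t_2,t_1}$ is a restriction of $f_{t_3,t_1}$. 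Taking $t_3=t_1$ shows that $f_{t_1,t_2}\circ f_{t_2,t_1}$ and, exchanging the roles of $t_1$ and $t_2$, $f_{t_2,t_1}\circ f_{t_1,t_2}$ are both restrictions of the identity, whence $f_{t_1,t_2}=f_{t_2,t_1}^{-1}$.

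Statements (ii) and (iii) are changes of variable in the flow parameter. For (ii) put $h_t:=f_{t_0+t,t_0}$; the chain rule gives $\pa_t h_t(z)=F(t_0+t,h_t(z))$ with $h_0(z)=z$, which is verbatim the backward chordal (resp.\ radial) Loewner equation driven by $t\mapsto\lambda(t_0+t)$, and since $t_0+t\ge t_0$ the map $h_t$ is defined on all of $\HH$ (resp.\ $\D$), so the hull description $L_t=\HH\sem h_t(\HH)$ and the capacity normalization transfer automatically; the case $t_0=0$ is just $f_{t,0}=f_t$. For (iii) put $k_t:=f_{t_0-t,t_0}$; then $\pa_t k_t(z)=-F(t_0-t,k_t(z))$, and $-F$ is exactly the forward chordal (resp.\ radial) Loewner vector field, so $(k_t)_{0\le t\le t_0}$ solves the forward equation driven by $t\mapsto\lambda(t_0-t)$ with $k_0(z)=z$. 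The only genuine subtlety in the whole lemma is the domain bookkeeping in (i); everything else is the chain rule together with ODE uniqueness, which is why the statement is called obvious.
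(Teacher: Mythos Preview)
Your proof is correct and follows exactly the natural route; the paper itself omits the argument entirely, stating only that the lemma is obvious. Your write-up supplies precisely the details one would expect: ODE uniqueness for (i), and the time-reparametrization $t\mapsto t_0\pm t$ for (ii)--(iii), together with the only genuine wrinkle, namely the domain bookkeeping that makes (i) a restriction rather than an equality.
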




We say that a backward chordal (resp.\ radial) Loewner process driven by $\lambda\in C([0,T))$ generates a family of backward chordal (resp.\ radial) traces $\beta_t$, $0\le t\le t_0$, if for each fixed $t_0\in(0,T)$, the forward chordal (resp.\ radial) Loewner process driven by $\lambda(t_0-t)$, $0\le t\le t_0$, generates a chordal (resp.\ radial) trace, which is $\beta_{t_0}(t_0-t)$, $0\le t\le t_0$. Equivalently, this means that, for each $t_0$, $\beta_{t_0}:[0,t_0]\to\lin{\HH}$ (resp.\ $\lin\D$) is continuous, and or any $t_2\ge t_1\ge 0$, $f_{t_2,t_1}$ extends continuously to $\lin\HH$ (resp.\ $\lin{\D}$) such that $\beta_{t_2}(t_1)=f_{t_2,t_1}({\lambda(t_1)})$ (resp.\ $f_{t_2,t_1}(e^{i\lambda(t_1)})$). Taking $t_2=t_1=t$, we get $\beta_t(t)=\lambda(t)\in\R$ (resp. $=e^{i\lambda(t)}\in\TT$). Moreover, the equality  $f_{t_2,t_1}\circ f_{t_1,t_0}=f_{t_2,t_0}$, $t_2\ge t_1\ge t_0\ge 0$, holds after the continuation, and so we have
\BGE f_{t_2,t_1}(\beta_{t_1}(t))=\beta_{t_2}(t),\quad t_2\ge t_1\ge t\ge 0.\label{backward-trace}\EDE

The backward chordal (resp.\ radial) SLE$_\kappa$ is defined to be the backward chordal (resp.\ radial) Loewner process driven by $\sqrt\kappa B(t)$, $0\le t<\infty$. The existence of the forward chordal (resp.\ radial SLE$_\kappa$) trace together with Lemma \ref{ft2t1} and the translation (resp.\ rotation) symmetry implies that the backward chordal (resp.\ radial) SLE$_\kappa$ process generates a family of backward chordal (resp.\ radial) traces, which are $\HH$(resp.\ $\D$)-simple, if $\kappa\le 4$. 

\vskip 4mm
\no{\bf Remark.} One should keep in mind that each $\beta_t$ is a continuous function defined on $[0,t]$, $\beta_t(0)$ is the tip of $\beta_t$, and $\beta_t(t)$ is the root of $\beta_t$, which lies on $\R$. The parametrization is different from a forward chordal Loewner trace.
\vskip 4mm

For every $\HH$(resp.\ $\D$)-hull $L$, $g_L$ extends analytically to $\R\sem\lin{L}$ (resp.\ $\TT\sem\lin{L}$), and maps $\R\sem\lin{L}$ (resp.\ $\TT\sem\lin{L}$) to an open subset of $\R$ (resp.\ $\TT$). The set $S_L:=\R\sem g_L(\R\sem\lin{L})$ (resp.\ $:=\TT\sem g_L(\TT\sem\lin{L})$) is a compact subset of $\R$ (resp.\ $\TT$), and is called the support of $L$. The map $f_L$ then extends analytically to $\R\sem S_L$ (resp.\ $\TT\sem S_L$). If $(L_t)_{0\le t<T}$ are $\HH$(resp.\ $\D$)-hulls generated by a backward chordal (resp.\ radial) Loewner process, then each $S_{L_t}$ is an interval on $\R$ (resp.\ $\TT$), and $S_{L_{t_1}}\subset S_{L_{t_2}}$ if $t_1<t_2$ (c.f.\ Lemmas 2.7 and 3.3 in \cite{RZ}). The following is Lemma 3.5 in \cite{RZ}.

\begin{Lemma}
  Let $L_t$, $0\le t<\infty$, be $\D$-hulls generated by a backward radial Loewner process. Then $\bigcup_t S_{L_t}$ is equal to either  $\TT$ or  $\TT$ without a single point. \label{union-support}
\end{Lemma}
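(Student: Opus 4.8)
The plan is to analyze the growing family of supports $S_{L_t}$ on $\TT$ and show that the complement of their union can contain at most one point. I would argue as follows.

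\medskip

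\textbf{Step 1: Monotone limit on the circle.} By the results quoted from \cite{RZ}, each $S_{L_t}$ is an interval (connected subset) of $\TT$, and these intervals are nested increasingly in $t$. Hence $I:=\bigcup_{t\ge 0} S_{L_t}$ is itself a connected subset of $\TT$, i.e.\ either all of $\TT$, or an arc (open, half-open, or closed, possibly a single point or empty). So the whole content of the lemma is to rule out every possibility except $\TT$ and $\TT$ minus a point; equivalently, to show that the arc-length of $I$ is the full $2\pi$, i.e.\ $\mm(\TT\sem I)=0$, where $\mm$ is arc length.

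\medskip

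\textbf{Step 2: Reduce to growth of the capacity and the size of the support.} The key quantitative input is that $\dcap(L_t)=t\to\infty$ as $t\to\infty$. The plan is to relate $\dcap(L)$ to the arc-length $\mm(S_L)$ of the support: I expect an inequality of the form
$$ \dcap(L)\le C\,\mm(S_L)^2 \quad\text{(or }\dcap(L)\le \Phi(\mm(S_L))\text{ for some bounded }\Phi\text{ when }\mm(S_L)<2\pi\text{)}. $$
Indeed, $f_{L}$ maps $\TT\sem S_L$ into $\TT\sem\lin L$, the hull $L$ is ``captured'' between the two arcs, and an extremal-length / harmonic-measure estimate should bound $\dcap(L)=\log f_L'(0)$ in terms of how much of the circle is occupied by $S_L$. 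If $\mm(S_L)$ stays bounded away from $2\pi$, then $\dcap(L)$ stays bounded. Since $\dcap(L_t)=t\to\infty$, we must have $\mm(S_{L_t})\to 2\pi$, hence $\mm(\TT\sem I)=0$, hence $I$ is $\TT$ or $\TT$ minus a point. (One can also run this via the forward radial Loewner maps $g_{L_t}=f_t^{-1}$ and Koebe/distortion bounds: if $g_{L_t}$ extends analytically across an arc of fixed positive length, its derivative at $0$ — whose log is $t$ — is controlled.)

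\medskip

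\textbf{Step 3: Assemble.} Combining Steps 1 and 2: $I$ is connected with full arc-length, so $\TT\sem I$ is a connected set of zero arc length, i.e.\ empty or a single point. This is exactly the assertion. I would also double-check the edge case where some individual $S_{L_t}$ already equals $\TT$ (then $I=\TT$ trivially) and the case $t_0$ finite versus $t_0=\infty$ — but since here $t$ ranges over $[0,\infty)$ and $\dcap$ is unbounded, this does not arise.

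\medskip

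\textbf{Main obstacle.} The crux is Step 2: making precise and proving the estimate that a $\D$-hull whose support occupies an arc of length bounded away from the full circle has bounded disc capacity. The cleanest route is probably an extremal-length argument comparing the modulus of the annular region $\D^*\sem L$ (equivalently $f_L(\D^*)$ complement) with that of $\D^*$, since $\dcap(L)$ is precisely the logarithm of the conformal modulus change; one must be careful that $L$ could come very close to $\TT$ or have complicated shape, but the support being a \emph{small} arc forces $L$ to ``hang'' from that small arc, and the Gr\"otzsch/Teichm\"uller-type extremal problems give the needed bound. An alternative is to invoke directly the relevant estimate from \cite{RZ}, if one of the lemmas there already packages the relation between support length and disc capacity; in that case Step 2 becomes a citation and the proof is essentially Steps 1 and 3.
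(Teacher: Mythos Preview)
The paper does not prove this lemma; it is quoted as Lemma~3.5 of \cite{RZ}. So there is no paper-proof to compare against, and your proposal has to stand on its own.

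Steps 1 and 3 are fine. The real gap is Step 2: the inequality you posit---that $\dcap(L)$ is bounded once $\mm(S_L)$ is bounded away from $2\pi$---is \emph{false} for general $\D$-hulls, and neither of your suggested justifications works. Take $L_\eps=[\eps,\eps+\eps^{10}]\subset\R\cap\D$. By Koebe's $1/4$ theorem applied to $f_{L_\eps}$, one gets $\dist(0,L_\eps)\ge \tfrac14 e^{-\dcap(L_\eps)}$, hence $\dcap(L_\eps)\ge\log\tfrac{1}{4\eps}\to\infty$. But $\mm(S_{L_\eps})/(2\pi)$ equals the harmonic measure of $L_\eps$ from $0$ in $\D\sem L_\eps$; rescaling by $1/\eps$, this is the probability that planar Brownian motion from $0$ hits a segment of length $\eps^{9}$ near $1$ before the circle of radius $1/\eps$, and a Green's-function comparison shows this tends to a constant near $1/10$, not to $1$. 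Your parenthetical alternative fails on the same example: here $\lin{L_\eps}\cap\TT=\emptyset$, so $g_{L_\eps}$ extends analytically across \emph{all} of $\TT$, yet $g_{L_\eps}'(0)=e^{\dcap(L_\eps)}\to\infty$.

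What makes the lemma true is structure specific to Loewner hulls that your Step~2 never invokes. One clean route works directly on the boundary flow: for $e^{ix}\in\TT\sem S_{L_t}$ write $f_t(e^{ix})=e^{i\psi_t(x)}$; differentiating the covering backward equation gives
\[
\pa_t\log|f_t'(e^{ix})|=\pa_t\log\psi_t'(x)=\frac{1}{2\sin_2^{2}(\psi_t(x)-\lambda(t))}\ge\frac12,
\]
so $|f_t'|\ge e^{t/2}$ on $\TT\sem S_{L_t}$. If $\TT\sem\bigcup_t S_{L_t}$ contained an arc $J$ of length $\ell>0$, then $f_t(J)\subset\TT$ would have length $\int_J|f_t'|\ge e^{t/2}\ell\to\infty$, which is impossible. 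Alternatively, you can repair your harmonic-measure approach by first observing (via Lemma~\ref{ft2t1}(iii)) that each $L_t$ is a \emph{forward} radial Loewner hull, so $\lin{L_t}$ is connected and meets $\TT$; Beurling's projection theorem then turns ``$\dcap(L_t)=t\to\infty$'' into ``harmonic measure of $L_t$ from $0$ tends to $1$'', and your Steps 1--3 go through. Either way, the missing ingredient is exactly this connectedness/Loewner structure, not a general capacity--support inequality.
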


Now we review the welding induced by a backward Loewner process. See Section 3.5 of \cite{RZ} for details.

Suppose $L=\beta$ is an $\HH$(resp.\ $\D$)-simple curve. Then $S_\beta$ is the union of two intervals on $\R$ (resp.\ $\TT$), which intersects at one point, and $f_\beta$ extends continuously to $S_\beta$, and maps the two intervals onto the two sides of $\beta$. Every point on $\beta$ except the tip point has two preimages. The welding $\phi_\beta$ induced by $\beta$ is the involution of $S_\beta$ with exactly one fixed point, which is the $f_\beta$-pre-image of the tip of $\beta$, such that for $x\ne y\in S_\beta$, $y=\phi_\beta(x)$ if and only if $f_{\beta}(x)=f_{\beta}(y)$.

Suppose a backward chordal (resp.\ radial) Loewner process generates a family of  $\HH$(resp.\ $\D$)-simple traces $(\beta_t)_{0\le t<T}$. Then for any $t_1<t_2$, $S_{\beta_{t_1}}$ is contained in the interior of $S_{\beta_{t_2}}$, and $\phi_{\beta_{t_1}}$ is a restriction of $\phi_{\beta_{t_2}}$. The latter can be seen from $f_{t_2,t_1}\circ f_{t_1}=f_{t_2}$. So the process naturally induces a welding $\phi$, which is an involution of the open interval $\bigcup_{0\le t<T} S_{\beta_t}$ on $\R$ (resp.\ $\TT$) such that $\phi|_{S_{\beta_t}}=\phi_{\beta_t}$ for each $t$. The lamination has only one fixed point, which is $\lambda(0)\in\R$ (resp.\ $e^{i\lambda(0)}\in\TT$). Consider the radial case and suppose $T=\infty$. Lemma \ref{union-support} and the properties of  $S_{\beta_t}$'s imply that $\TT\sem \bigcup_{0\le t<\infty} S_{\beta_t}$ contains exactly one point, say $w_0$. We call $w_0$ the joint point of the process, which is the only point such that $f_t(w_0)\in\TT$ for all $t\ge 0$.  In this case we extend $\phi$ to an involution of $\TT$ with exactly two fixed points: $e^{i\lambda(0)}$ and $w_0$.

\section{SLE$(\kappa;\rho)$ Processes} \label{Section-kappa-rho'}
In this section, we review the definitions of the forward and backward radial SLE$(\kappa;\rho)$ processes, respectively, as well as the whole-plane SLE$(\kappa;\rho)$ process.

Let $\kappa>0$ and and $\rho\in\R$. Let $\sigma\in\{1,-1\}$. The case $\sigma=1$ (resp.\ $=-1$) corresponds to the forward (resp.\ backward) process. 
Let $z\ne w\in\TT$. Choose $x,y\in\R$ such that $e^{ix}=z$, $e^{iy}=w$, and $0<x-y<2\pi$.  Let $\lambda(t)$ and $q(t)$, $0\le t<T$, be the solution of the system of SDE:
\BGE \left\{\begin{array}{ll} d\lambda(t)=\sqrt\kappa dB(t)+\sigma\frac\rho2\cot_2(\lambda(t)-q(t))dt,& \lambda(0)=x;\\
dq(t)=\sigma\cot_2(q(t)-\lambda(t))dt,&q(0)=y.
\end{array}\right.\label{kappa-tau-sigma}\EDE
If $\sigma=1$ (resp.\ $=-1$), the forward (resp.\ backward) radial Loewner process driven by $\lambda$ is called a forward (resp.\ backward) SLE$(\kappa;\rho)$ process started from $(z;w)$. Recall that $\cot_2(z)=\cot(z/2)$. The appearance of $\cot_2$ comes from the covering forward and backward radial Loewner equations. Since $\cot_2$ has period $2\pi$,  it is easy to see that the definition does not depend on the choice of $x,y$.

Let $Z_t=\lambda(t)-q(t)$. Then $(\frac 12Z_{\frac 4\kappa t})$ is a radial Bessel process of dimension $\delta:=\frac 4\kappa \sigma(\frac\rho 2+1)+1$ (See Appendix \ref{B'}). Thus, $T=\infty$ if $\delta\ge 2$; $T<\infty$ if $\delta<2$.

\begin{Lemma}
  Let $\kappa>0$ and $\rho\le -\frac\kappa 2-2$. Let $L_t$, $0\le t<\infty$, be $\D$-hulls generated by a backward radial SLE$(\kappa;\rho)$ process started from $(z;w)$. Then $\bigcup_{t\ge 0} S_{L_t}=\TT\sem\{w\}$. \label{joint}
\end{Lemma}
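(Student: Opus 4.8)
The plan is to combine Lemma \ref{union-support}, which already tells us that $\bigcup_{t\ge 0} S_{L_t}$ is either all of $\TT$ or $\TT$ minus a single point, with an analysis of the driving process $(\lambda,q)$ near the endpoint $w$. Since $\rho\le -\frac\kappa2-2$ forces $\delta=\frac4\kappa\sigma(\frac\rho2+1)+1\le 1<2$ in the backward ($\sigma=-1$) case, the swallowing time $T$ for the Bessel-type process $Z_t=\lambda(t)-q(t)$ is a.s.\ finite, but that is the swallowing time for the process started from the chosen initial separation; what matters here is the long-time behaviour of the flow applied to the point $w$ itself. So first I would set up the covering radial Loewner picture: lift everything to $\R$ via $e^{i\cdot}$, so that $f_t$ becomes a map $\til f_t$ satisfying the covering backward radial Loewner ODE, and the support $S_{L_t}$ lifts to an interval $\til S_t\subset\R$ whose complement (mod $2\pi$) is governed by the images $\til f_t(x\pm 2k\pi)$ and $\til f_t(y+2k\pi)$. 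The point $w=e^{iy}$ stays on $\TT$ under all $f_t$ precisely because $q(t)$ solves its own ODE with $q(0)=y$ and never leaves $\R$ in the covering picture; hence $w\notin S_{L_t}$ for every $t$, which already gives $\bigcup_{t\ge0}S_{L_t}\subset\TT\sem\{w\}$.

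For the reverse inclusion I would argue that every point of $\TT$ other than $w$ is eventually swallowed, i.e.\ eventually enters some $S_{L_t}$. Equivalently, in the covering picture, for any $u\in\R$ with $u\not\equiv y\pmod{2\pi}$, the solution $t\mapsto \til f_t(u)$ of the backward covering ODE ceases to exist in finite time (its trajectory is absorbed into the growing hull). By Lemma \ref{ft2t1}(iii) and the time-reversal relation between the backward flow and a forward radial Loewner flow, this is the statement that $u$ is hit by the forward SLE$(\kappa;\rho)$-type trace, or rather swallowed by its hulls, at some finite time. The key mechanism is that the reduced driving function $Z_t=\lambda(t)-q(t)$, after the Bessel time change, is a radial Bessel process of dimension $\delta\le 1$, so it is recurrent to $0$ and in fact hits $0$; each time $Z$ returns near $0$ the backward hull $L_t$ grows to engulf a further neighbourhood on $\TT$ on the appropriate side of the joint point. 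Because $\rho\le-\frac\kappa2-2$ makes the drift in the $\lambda$-equation strong enough (this is exactly the threshold appearing in \cite{RZ} for the welding to be defined), the hull sweeps out the entire circle except the single point $w$ that the $q$-coordinate protects. Lemma \ref{union-support} then upgrades "all but at most one point, and $w$ is excluded" to the exact identity $\bigcup_{t\ge0}S_{L_t}=\TT\sem\{w\}$: the excluded point from that lemma must be $w$, since we have shown $w$ is excluded and at most one point is.

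Concretely the steps are: (1) lift to the covering equation and record that $q(t)\in\R$ for all $t$, so $w\notin S_{L_t}$ for all $t$, giving $\bigcup_t S_{L_t}\subset\TT\sem\{w\}$; (2) invoke Lemma \ref{union-support} to get that $\TT\sem\bigcup_t S_{L_t}$ is either empty or a single point, so it remains to rule out the empty case — equivalently to show $\bigcup_t S_{L_t}\ne\TT$, which is immediate from (1); wait, that already finishes it, so instead (2') the real content is to show $\bigcup_t S_{L_t}$ is \emph{not} strictly smaller than $\TT\sem\{w\}$, i.e.\ that no \emph{second} point is missed — but Lemma \ref{union-support} forbids two missed points, so (1) plus Lemma \ref{union-support} already give the result. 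Thus the proof is genuinely short: the only substantive point is step (1), that $w$ itself is never swallowed, together with citing Lemma \ref{union-support}. I expect the main (minor) obstacle to be bookkeeping the covering-map conventions so that "$q(t)\in\R$ for all $t\in[0,\infty)$" is justified — one must check $T=\infty$ for the $q$-coordinate's existence, i.e.\ that $q$ and $\lambda$ do not collide in finite time in a way that destroys the solution of \eqref{kappa-tau-sigma}; but for $\rho\le-\frac\kappa2-2$ the Bessel dimension is $\le1$, the process $Z_t$ is defined for all $t$ after the time change with $Z_t>0$ except at isolated (or a closed measure-zero set of) times, and the standard analysis in \cite{RZ} shows the SDE solution extends to $[0,\infty)$ with $q(t)=e^{i\cdot}$-preimage staying on $\TT$. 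Assembling these gives $\bigcup_{t\ge0}S_{L_t}=\TT\sem\{w\}$.
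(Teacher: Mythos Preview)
Your overall structure is exactly the paper's: show $w\notin S_{L_t}$ for every $t$ because $e^{iq(t)}=f_t(w)\in\TT$, then invoke Lemma~\ref{union-support}. You even recognize midway through that nothing more is needed.

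However, you have a sign error in the Bessel dimension. With $\sigma=-1$ and $\rho\le -\tfrac\kappa2-2$ one has $\tfrac\rho2+1\le -\tfrac\kappa4$, hence $\sigma(\tfrac\rho2+1)\ge\tfrac\kappa4$ and
\[
\delta=\tfrac4\kappa\,\sigma\bigl(\tfrac\rho2+1\bigr)+1\ \ge\ 2,
\]
not $\delta\le 1$. This is not merely cosmetic. With the correct $\delta\ge 2$ the radial Bessel process $(\tfrac12 Z_{4t/\kappa})$ never hits $\{0,\pi\}$, so the system (\ref{kappa-tau-sigma}) has $T=\infty$ directly, $q(t)$ is defined for all $t\ge 0$, and $e^{iq(t)}=f_t(w)$ follows by comparing the ODE for $q$ with the backward radial Loewner equation. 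All of your discussion of recurrence to $0$, absorption, hitting $\{0,2\pi\}$ at isolated times, and extending the SDE past swallowing is therefore unnecessary --- and in fact could not be made to work as stated: if $\delta$ really were $\le 1$, local integrability of $\cot_2(Z_s)$ would fail and the extension you appeal to (cf.\ the Remark after Proposition~\ref{Prop2}, which requires $\delta>1$) would not be available. Once the sign is fixed, your argument collapses to the two-line proof the paper gives.
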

\begin{proof} Since $\sigma=-1$ for the backward equation, $\rho\le -\frac\kappa 2-2$ implies that $\delta\ge 2$, and so $T=\infty$.
Let $f_t$, $0\le t<\infty$, be the conformal maps generated by the backward radial SLE$(\kappa;\rho)$ process. Formula (\ref{kappa-tau-sigma}) in the case $\sigma=-1$ implies that $e^{iq(t)}=f_t(w)$, $0\le t<\infty$. This means that $w\not\in S_{L_t}$, $0\le t<\infty$. The conclusion then follows from Lemma \ref{union-support}.
\end{proof}

Assume that $\delta\ge 2$, which means that $\rho\ge \frac\kappa 2-2$ if $\sigma=1$ and $\rho\le -\frac\kappa 2-2$ if $\sigma=-1$. From Corollary \ref{Xtstationary'}, $(Z_t)$ has a unique stationary distribution $\mu_\delta$, which has a density proportional to $\sin_2(x)^{\delta-1}$, and the stationary process is reversible. Let $(\bar Z_t)_{t\in\R}$ denote the stationary process. Let $\bar y$ be a random variable with uniform distribution $U_{[0,2\pi)}$ on $[0,2\pi)$ such that $\bar y$ is independent of $(\bar Z_t)$. Let $\bar q(t)={\bar y}-\sigma\int_0^t \cot_2(\bar Z_s)ds$ and $\bar \lambda(t)=\bar q(t)+\bar Z_t$, $t\in\R$. If $\sigma=1$ (resp.\ $=-1$), the forward (resp.\ backward) radial Loewner process driven by $\bar \lambda(t)$, $0\le t<\infty$, is called a stationary forward (resp.\ backward) radial SLE$(\kappa;\rho)$ process. Equivalently, a stationary forward (resp.\ backward) radial SLE$(\kappa;\rho)$ process is a forward (resp.\ backward) radial SLE$(\kappa;\rho)$ process started from a random pair $(e^{i\bar x},e^{i\bar y})$ with $(\bar x,\bar x-\bar y)\sim U_{[0,2\pi)}\times \mu_\delta$. If $\sigma=1$, the whole-plane Loewner process driven by $\bar \lambda(t)$, $t\in\R$, is called a whole-plane SLE$(\kappa;\rho)$ process.

It is easy to verify the following Markov-type relation between a whole-plane SLE$(\kappa;\rho)$ process and a forward radial SLE$(\kappa;\rho)$ process. Recall that $I_\TT(z)=1/\bar z$ is the reflection about $\TT$.
Let $g_t$ and $K_t$, $t\in\R$, be maps and hulls generated by a whole-plane SLE$(\kappa;\rho)$ process. Let $t_0\in\R$. Then $I_\TT\circ g_{t_0+t}\circ g_{t_0}^{-1}\circ I_\TT$ and $I_\TT\circ g_{t_0}(K_{t_0+t}\sem K_{t_0})$, $t\ge 0$, are maps and hulls generated by a stationary forward radial SLE$(\kappa;\rho)$ process.

Using the reversibility of the stationary radial Bessel processes of dimension $\delta\ge 2$, we obtain the following lemma.

\begin{Lemma}
  Let $\kappa>0$ and $\rho\le -\frac\kappa 2-2$. Let $\lambda(t)$, $t\ge 0$, be a driving function of a stationary backward radial SLE$(\kappa;\rho)$ process. Then for any $t_0>0$, $\lambda(t_0-t)$, $0\le t\le t_0$, is a driving function up to time $t_0$ of a stationary forward radial SLE$(\kappa;-4-\rho)$ process; and $\lambda(-t)$, $-\infty< t\le 0$, is a driving function up to time $0$ of a whole-plane SLE$(\kappa;-4-\rho)$ process. \label{reversal-driving'}
\end{Lemma}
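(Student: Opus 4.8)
The plan is to reduce the whole statement to the reversibility of the stationary radial Bessel process of dimension $\delta\ge 2$ (Corollary~\ref{Xtstationary'}), by tracking how the driving function $\lambda$ and its companion $q$ transform under time reversal. The first ingredient is the algebraic fact that makes $\rho\mapsto -4-\rho$ the correct reparametrization: for the backward process ($\sigma=-1$) with parameter $\rho$, the auxiliary process $Z_t=\lambda(t)-q(t)$ is, after the standard time change, a radial Bessel process of dimension $\delta=-\frac4\kappa(\frac\rho2+1)+1$, while for the forward process ($\sigma=1$) with parameter $\rho'=-4-\rho$ one has $\frac{\rho'}2+1=-(\frac\rho2+1)$, hence dimension $\delta'=\frac4\kappa(\frac{\rho'}2+1)+1=\delta$. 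Since $\rho\le-\frac\kappa2-2$ forces $\delta\ge2$, the stationary companion process $\bar Z$ exists, has marginal law $\mu_\delta$, and is reversible; and because $\delta=\delta'$, the backward radial SLE$(\kappa;\rho)$, the forward radial SLE$(\kappa;-4-\rho)$, and the whole-plane SLE$(\kappa;-4-\rho)$ all carry the same $\bar Z$ in law.

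For the fixed-time claim, write the stationary backward radial SLE$(\kappa;\rho)$ via the $\sigma=-1$ case of (\ref{kappa-tau-sigma}): $\bar q(t)=\bar y+\int_0^t\cot_2(\bar Z_s)\,ds$ and $\bar\lambda(t)=\bar q(t)+\bar Z_t$, with $\bar y\sim U_{[0,2\pi)}$ independent of $(\bar Z_s)$. Fix $t_0>0$ and set $\tilde Z_t=\bar Z_{t_0-t}$, $\tilde q(t)=\bar q(t_0-t)$, $\tilde\lambda(t)=\bar\lambda(t_0-t)$ for $0\le t\le t_0$. Then $\tilde\lambda=\tilde q+\tilde Z$, and differentiating $\tilde q(t)=\bar q(t_0-t)$ gives $\tilde q'(t)=-\cot_2(\tilde Z_t)=\cot_2(\tilde q(t)-\tilde\lambda(t))$, using that $\cot_2$ is odd; this is exactly the $\sigma=1$ relation linking $q$ to $\lambda$. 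Consequently $\tilde\lambda$ is the \emph{same} deterministic functional of the pair $\bigl(\tilde\lambda(0),(\tilde Z_t)_{0\le t\le t_0}\bigr)$ as the driving function of a forward radial SLE$(\kappa;-4-\rho)$ is of its initial point together with its companion path. It then remains to identify the law of this pair: by stationarity together with reversibility of $\bar Z$, $(\tilde Z_t)_{0\le t\le t_0}=(\bar Z_{t_0-t})_{0\le t\le t_0}$ has the law of the stationary time-changed radial Bessel process of dimension $\delta=\delta'$ restricted to $[0,t_0]$; and, conditioning on the path $(\bar Z_s)_{0\le s\le t_0}$, the value $\tilde\lambda(0)=\bar\lambda(t_0)=\bar y+\bar Z_{t_0}+\int_0^{t_0}\cot_2(\bar Z_s)\,ds$ is $\bar y$ plus a quantity measurable with respect to that path, hence is uniform modulo $2\pi$ and independent of $(\tilde Z_t)_{0\le t\le t_0}$. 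This matches precisely the initial data of a stationary forward radial SLE$(\kappa;-4-\rho)$, proving the first assertion.

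For the whole-plane claim the argument is identical but with no truncation: put $\tilde Z_t=\bar Z_{-t}$ and $\tilde\lambda(t)=\lambda(-t)$ for $t\le0$, and a change of variables $u=-v$ in the integral yields $\tilde\lambda(t)=\bar y+\tilde Z_t-\int_0^t\cot_2(\tilde Z_v)\,dv$, which is exactly the form of the driving function of a whole-plane SLE$(\kappa;-4-\rho)$ process built from a two-sided stationary radial Bessel process of dimension $\delta'=\delta$ and an independent uniform variable. Reversibility of $\bar Z$ gives $(\tilde Z_t)_{t\in\R}\stackrel{d}{=}(\bar Z_t)_{t\in\R}$, and independence of $\bar y$ from $(\bar Z_s)$ handles the uniform part, so the two driving functions agree in law, in particular on $(-\infty,0]$.

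I do not expect a genuine obstacle here: granted the reversibility of the stationary radial Bessel process, the rest is bookkeeping. The two points needing the most care are (i) checking that the time-reversed relation between $q$ and $\lambda$ is the $\sigma=+1$ relation and not the $\sigma=-1$ one — this is where the oddness of $\cot_2$ and the shift $\rho\mapsto-4-\rho$ cooperate — and (ii) verifying that the reference point ($\bar\lambda(t_0)$, resp.\ $\bar y$) is, after reversal, uniform modulo $2\pi$ and independent of the reversed companion path, which uses crucially that $\bar y$ is independent of the \emph{entire} process $(\bar Z_s)$ rather than merely of $\bar Z_0$.
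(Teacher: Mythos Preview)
Your proposal is correct and follows exactly the approach the paper intends: the paper simply asserts that the lemma follows from the reversibility of the stationary radial Bessel process of dimension $\delta\ge 2$ (Corollary~\ref{Xtstationary'}) without spelling out the bookkeeping, and you have carried out that bookkeeping correctly, including the key algebraic identity $\delta'=\frac4\kappa(\frac{-4-\rho}2+1)+1=-\frac4\kappa(\frac\rho2+1)+1=\delta$ and the verification that the time-reversed pair $(\tilde\lambda,\tilde q)$ satisfies the $\sigma=+1$ system together with the correct independence/uniformity of the initial datum modulo $2\pi$.
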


Girsanov's theorem implies that many properties of forward or backward radial SLE$_\kappa$ process carry over to radial SLE$(\kappa;\rho)$ processes. For example, a forward (resp.\ backward) radial SLE$(\kappa;\rho)$ process generates a forward radial trace (resp.\ a family of backward radial traces). 
If $\kappa\le 4$ and $\rho\le -\frac\kappa 2-2$, then a backward radial SLE$(\kappa;\rho)$ process induces a welding, say $\phi$, of $\TT$ with two fixed points. Suppose the process is started from $(z;w)$. From $e^{i\lambda(0)}=e^{i(q(0)+Z_0)}=e^{ix}=z$ we see that $z$ is one fixed point of $\phi$. Lemma \ref{joint} implies that $w$ is the joint point of the process, and so is the other fixed point of $\phi$.

\begin{Corollary}
  Let $\kappa>0$ and $\rho\le -\frac\kappa 2-2$. Let $(\beta_t)$ be a family of backward radial traces generated by a stationary backward radial SLE$(\kappa;\rho)$ process. Let $\beta$ be a stationary forward radial SLE$(\kappa;-4-\rho)$ trace. Then for every fixed $t_0\in(0,\infty)$, $\beta_{t_0}(t)$, $0\le t\le t_0$, has the same distribution as $\beta(t_0-t)$, $0\le t\le t_0$. \label{reversal-trace}
\end{Corollary}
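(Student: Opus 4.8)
The plan is to deduce this immediately from the reversal of driving functions (Lemma~\ref{reversal-driving'}) together with the definition of a family of backward radial traces, the key point being that a forward radial trace, when it exists, is a deterministic measurable functional of its driving function on a compact time interval.

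Fix $t_0\in(0,\infty)$ and let $\lambda$ be the driving function of the given stationary backward radial SLE$(\kappa;\rho)$ process. By the very definition of the family $(\beta_t)$ of backward radial traces, the forward radial Loewner process driven by $s\mapsto\lambda(t_0-s)$, $0\le s\le t_0$, generates a radial trace which is exactly $s\mapsto\beta_{t_0}(t_0-s)$, $0\le s\le t_0$. Hence there is a measurable map $\Phi_{t_0}$ from $C([0,t_0])$ into the space of continuous curves such that $(\beta_{t_0}(t_0-s))_{0\le s\le t_0}=\Phi_{t_0}\big((\lambda(t_0-s))_{0\le s\le t_0}\big)$ almost surely; such a $\Phi_{t_0}$ exists because the radial Loewner maps $g_s$, and hence $\beta(s)=\lim_{z\to e^{i\lambda(s)}}g_s^{-1}(z)$ where it exists, depend measurably on the driving function restricted to $[0,t_0]$.

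On the other hand, by Lemma~\ref{reversal-driving'} the time-reversed process $s\mapsto\lambda(t_0-s)$, $0\le s\le t_0$, has the same law as the driving function up to time $t_0$ of a stationary forward radial SLE$(\kappa;-4-\rho)$ process; here $-4-\rho\ge\frac\kappa2-2$, so $\delta\ge2$ and this stationary process is well defined, and (by the Girsanov argument recalled above) it a.s.\ generates a forward radial trace, namely, after restriction to $[0,t_0]$, the trace $\beta$ of the statement, which equals $\Phi_{t_0}$ evaluated at its own driving function. Since equality in law is preserved by the common measurable map $\Phi_{t_0}$, we conclude that $(\beta_{t_0}(t_0-s))_{0\le s\le t_0}$ and $(\beta(s))_{0\le s\le t_0}$ have the same distribution; writing $t=t_0-s$ gives the assertion.

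All of this is routine once Lemma~\ref{reversal-driving'} is in hand; the only mild points of care — and the closest thing to an obstacle — are (a) the measurability of the forward radial trace as a functional of its driving function on the compact interval $[0,t_0]$, and (b) matching the two reversals, i.e.\ checking that the ``driving function up to time $t_0$'' appearing in Lemma~\ref{reversal-driving'} really determines the forward trace on all of $[0,t_0]$ and that the parameter change $\rho\mapsto-4-\rho$ is precisely the one produced by the reversal of the underlying stationary radial Bessel process. I expect (b) to be where one must be most attentive, but it requires no new estimate beyond what has already been set up.
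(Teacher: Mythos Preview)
Your proposal is correct and follows essentially the same route as the paper: the corollary is stated there without an explicit proof, as an immediate consequence of Lemma~\ref{reversal-driving'} together with the definition of backward radial traces (Lemma~\ref{ft2t1}(iii)) and the Girsanov-based existence of the forward trace. Your extra care about the measurable dependence of the trace on the driving function makes explicit the one routine point the paper leaves implicit.
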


\no{\bf Remark.} One special value of $\rho$ is $-4$. Theorem 6.8 in \cite{RZ} implies that, if $\kappa\in(0,4]$, a stationary backward radial SLE$(\kappa;-4)$ process is a stationary  backward radial SLE$_\kappa$ process, i.e., the process driven by $\lambda(t)=\bar x+\sqrt{\kappa} B(t)$, where $\bar x$ is a random variable uniformly distributed on $[0,2\pi)$ and independent of $B(t)$. So the above corollary provides a connection between a family of stationary backward radial SLE$_\kappa$ traces and a stationary forward radial SLE$_\kappa$ trace.

\vskip 4mm
We are especially interested in the backward radial SLE$(\kappa;-\kappa-6)$ processes. The proposition below is a particular case of Theorem 4.7 in \cite{RZ}.

\begin{Proposition}
  Let $\kappa>0$ and $z_0\ne z_\infty\in\TT$. Let $f_t$ and $L_t$, $0\le t<\infty$, be the backward radial SLE$(\kappa;-\kappa-6)$ maps and hulls started from $(z_0,z_\infty)$. Let $W$ be a M\"obius transformation with $W(\D)=\HH$, $W(z_0)=0$, and $W(z_\infty)=\infty$. Then there is a strictly increasing function $v$ with $v([0,\infty))=[0,\infty)$ such that $W^*(L_{v(t)})$, $0\le t<\infty$, are the $\HH$-hulls driven by a backward chordal SLE$_\kappa$ process. \label{Prop1}
\end{Proposition}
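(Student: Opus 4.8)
**Proof proposal for Proposition \ref{Prop1} (backward radial SLE$(\kappa;-\kappa-6)$ conformally equivalent to backward chordal SLE$_\kappa$).**

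The plan is to transport the backward radial Loewner equation driven by the SLE$(\kappa;-\kappa-6)$ pair $(\lambda,q)$ through the fixed M\"obius map $W:\D\conf\HH$ with $W(z_0)=0$, $W(z_\infty)=\infty$, and show that the resulting family of $\HH$-hulls, after a capacity reparametrization, is driven by a backward chordal Loewner equation whose driving function is $\sqrt\kappa$ times a Brownian motion. First I would set up the conjugated maps $\til f_t := W\circ f_t\circ W^{-1}$; since $f_t$ fixes the point $z_\infty$ with $f_t(z_\infty)=e^{iq(t)}\in\TT$ and $W$ sends $z_\infty$ to $\infty$, one checks that $\til f_t$ is a conformal map of $\HH$ onto $\HH\sem \til L_t$ with $\til L_t = W^*(L_t)$ an $\HH$-hull, and $\til f_t$ is normalized at $\infty$ up to an affine factor coming from $W\circ(\cdot)\circ W^{-1}$ evaluated near $\infty$. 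Differentiating $\til f_t(z)$ in $t$ and using the backward radial equation \eqref{radial-backward-2} for $f_t$, together with the chain rule and the explicit form of $W$, I expect to obtain
\[
\pa_t \til f_t(z) = -\,\frac{a(t)}{\til f_t(z)-\xi(t)} + b(t)\,,
\]
where $\xi(t)=W(e^{i\lambda(t)}/\text{appropriate sheet})$ is the image of the radial driving point, and $a(t)>0$, $b(t)\in\R$ are deterministic-in-$(\lambda,q)$ functions built from derivatives of $W$ at $e^{i\lambda(t)}$ and $e^{iq(t)}$. The term $b(t)$ is an additive drift in $\HH$ that does not affect the hulls (it can be absorbed), and $a(t)$ gives the time change: set $v$ so that $\int_0^{v(t)} a(s)\,\tfrac{ds}{2}\cdot(\text{const}) = t$, i.e. $v$ is chosen so that $\hcap$ of the reparametrized hulls grows linearly.

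The key step is then to compute the It\^o differential of the chordal driving function $\xi(t)$. Here I would use that $W$ conjugates the radial covering equation to (a version of) the chordal one; concretely, $\xi = W(e^{i\lambda})$ up to real translation, so $d\xi = W'(e^{i\lambda})ie^{i\lambda}\,d\lambda + \tfrac12(W''(e^{i\lambda})(ie^{i\lambda})^2 + W'(e^{i\lambda})i^2 e^{i\lambda})\,d\langle\lambda\rangle$. Plugging in the SDE \eqref{kappa-tau-sigma} for $d\lambda$ with $\sigma=-1$, $\rho=-\kappa-6$, so that the drift term is $-\tfrac{\rho}{2}\cot_2(\lambda-q) = (\tfrac{\kappa}{2}+3)\cot_2(\lambda-q)$, and using $d\langle\lambda\rangle = \kappa\,dt$, the point of the computation is that the drift contributions from the $\cot_2$ force term, from the It\^o second-order term, and from the $q$-dynamics (since $\xi$ should be measured relative to a fixed point, here $0=W(z_0)$, not relative to a moving marked point) all conspire — after dividing by $\sqrt{a(t)}$ in the new time $v$ — to cancel exactly. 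This is the standard SLE$(\kappa;\kappa+6)$-type miracle: the value $\rho=-\kappa-6$ on the backward side is precisely tuned so that conjugating to the chordal picture kills the drift, leaving $d\xi = \sqrt{\kappa a(t)}\,dB(t) + (\text{removable drift } b)$, i.e. in the $v$-time the driving function is $\sqrt\kappa$ times a Brownian motion. Finally I would check $v([0,\infty))=[0,\infty)$: since $\rho\le -\tfrac\kappa2-2$ gives $\delta\ge2$ hence $T=\infty$ for the radial process, and since $a(t)$ stays bounded away from $0$ and $\infty$ on compacts but the total capacity is carried off to infinity as the radial hulls exhaust $\D$ (by Lemma \ref{joint}, $\bigcup_t S_{L_t}=\TT\sem\{z_\infty\}$), the reparametrized half-plane capacity $2v^{-1}$ is strictly increasing with range all of $[0,\infty)$.

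The main obstacle I anticipate is bookkeeping the marked point and the M\"obius derivatives cleanly: one must be careful that $W$ is applied on the correct branch of the covering map (the radial equation naturally lives on the cylinder via $e^{i\cdot}$), that the normalization $\til f_t(z)-z\to 0$ is only achieved after subtracting the affine part, and that the "removable drift" $b(t)$ really is removable — i.e. the hulls $\til L_t$ depend only on $\xi(\cdot)-\int b$, which is the translation symmetry of the backward chordal equation recorded earlier in Section \ref{Loewner}. Once the drift cancellation is verified by a direct (if tedious) It\^o computation, invoking Proposition's source Theorem 4.7 of \cite{RZ} — or simply citing it, as the statement says this is a special case — completes the argument; in a self-contained write-up the cancellation identity for $\cot_2$ is the one computational lemma that must be stated explicitly.
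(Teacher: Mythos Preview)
The paper does not prove this proposition: it is stated as a particular case of Theorem~4.7 in \cite{RZ}, with the $W^*$-notation and the family $W_t=W^{L_t}$ unpacked in the paragraph that follows. So there is no ``paper's own proof'' to compare against, only the cited source.

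Your sketch has the right overall shape (conjugate to $\HH$, reparametrize by $\hcap$, compute the It\^o drift of the chordal driving function, observe a cancellation at $\rho=-\kappa-6$), but the first step contains a genuine gap. You set $\til f_t=W\circ f_t\circ W^{-1}$ with the \emph{fixed} M\"obius map $W$ and claim
\[
\pa_t\til f_t(z)=-\frac{a(t)}{\til f_t(z)-\xi(t)}+b(t).
\]
This cannot hold. Since $f_t(z_\infty)=e^{iq(t)}$ and $q$ is not constant, $\til f_t(\infty)=W(e^{iq(t)})$ is a \emph{finite} real point for $t>0$; thus $\til f_t$ is not a chordal-normalized map. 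If you actually carry out the chain rule, writing $u=W^{-1}(\til f_t(z))$, you get
\[
\pa_t\til f_t(z)=-W'(u)\,u\,\frac{e^{i\lambda(t)}+u}{e^{i\lambda(t)}-u},
\]
and as $u\to z_\infty$ (i.e.\ $\til f_t\to\infty$) the factor $W'(u)$ blows up like $(u-z_\infty)^{-2}$, producing growth of order $\til f_t^{\,2}$, not a bounded constant $b(t)$. So the ``removable affine drift'' is not affine at all; translating does not restore the chordal normalization.

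What the argument in \cite{RZ} actually does---and what the paragraph after the proposition spells out---is to replace the fixed $W$ by the \emph{time-dependent} M\"obius family $W_t:=W^{L_t}:\D\to\HH$ determined by $W_t\circ f_t=\ha f_{v^{-1}(t)}\circ W$; this is exactly what makes $W^*(L_t):=W_t(L_t)$ an $\HH$-hull with the correct normalization at $\infty$. The It\^o computation you describe is then performed for the driving function $\ha\lambda(v^{-1}(t))=W_t(e^{i\lambda(t)})$ of the $W_t$-conjugated process, and it is there that the choice $\rho=-\kappa-6$ kills the drift. Your closing remark that one can ``simply cite'' Theorem~4.7 of \cite{RZ} is in fact what the paper does; if you want a self-contained write-up, the missing ingredient is the $W_t$-family, not a sharper bookkeeping of $b(t)$.
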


The symbol $W^*(L)$ is defined in Section 2.3 of \cite{RZ}. Lemma 2.20 in \cite{RZ} ensures  that for a $\D$-hull $L$ and a M\"obius transformation $W$ with $W^{-1}(\infty)\not\in S_L$, there is a unique M\"obius transformation $W^L$ with $W^L(\D)=\HH$ such that $W^L(L)$ is an $\HH$-hull, and $W^L\circ f^\D_L=f^{\HH}_{W^L(L)}\circ W$
holds in $\D$. 
The $W^*(L)$ is defined to be the $\HH$-hull $W^L(L)$. Since $z_\infty$ is the joint point of the process, $W^{-1}(\infty)=z_\infty\not\in S_{L_t}$ for each $t$, and so $W^{L_t}$ and $W^*(L_t)$ are well defined.

Write $W_t=W^{L_t}$, $0\le t< \infty$. Let $\lambda$ be the driving function for the backward radial Loewner process $(L_t)$. Let $\ha\lambda$ be the driving function for the backward chordal process $(W^*(L_{v(t)})=W_{v(t)}(L_{v(t)}))$. Then (4.10) in \cite{RZ} implies that $W_t(e^{i\lambda(t)})=\ha\lambda(v(t))$. In fact, in (4.10) of \cite{RZ}, the $\til W$ satisfies that $e^{i\til W(z)}=W(e^{iz})$, and the $\lambda^*(t)$ corresponds to the $\ha\lambda(v(t))$ here. Let $f_t$ (resp.\ $\ha f_t$), $f_{t_2,t_1}$ (resp.\ $\ha f_{t_2,t_1}$), and $(\beta_t)$ (resp.\ $\ha\beta_t$), $0\le t<\infty$, be the backward radial (resp.\ chordal) Loewner maps, flows, and traces driven by $\lambda$ (resp.\ $\ha\lambda$).
Then we have $W_t\circ f_t=\ha f_{v^{-1}(t)}\circ W$ in $\D$ for any $t\ge 0$. Applying this equality to $t=t_2$ and $t=t_1$, where $t_2\ge t_1\ge 0$, and using Lemma \ref{ft2t1}, we get
$W_{t_2}\circ f_{t_2,t_1}\circ f_{t_1}=\ha f_{v^{-1}(t_2),v^{-1}(t_1)}\circ W_{t_1}\circ f_{t_1}$
in $\D$, which implies that $W_{t_2}\circ f_{t_2,t_1}=\ha f_{v^{-1}(t_2),v^{-1}(t_1)}\circ W_{t_1}$ in $\D$, and so
$$\ha\beta_{t_2}(t_1)=\ha f_{t_2,t_1}(\ha \lambda(t_1))=\ha f_{t_2,t_1}\circ W_{v(t_1)}(e^{i\lambda(v(t_1))})$$$$=W_{v(t_2)}\circ f_{v(t_2),v(t_1)}(e^{i\lambda(v(t_1))})=W_{v(t_2)}(\beta_{v(t_2)}(v(t_1))).$$
Thus, the proposition above implies the following corollary.

\begin{Corollary}
  Let $\kappa>0$ and $z_0\ne z_\infty\in\TT$. Let $\beta_t$, $0\le t<\infty$, be the backward radial SLE$(\kappa;-\kappa-6)$ traces started from $(z_0,z_\infty)$. Then there exist a strictly increasing function $v$ with $v([0,\infty))=[0,\infty)$, and a family of M\"obius transformations $(W_t)_{t\ge 0}$ with $W_t(\D)=\HH$, such that $\ha\beta_{t}:=W_{v(t)}\circ \beta_{v(t)}\circ v$, $0\le t<\infty$, are backward chordal traces generated by a backward chordal SLE$_\kappa$ process. \label{Prop1-cor}
\end{Corollary}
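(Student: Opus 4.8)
In the end this is a bookkeeping consequence of Proposition~\ref{Prop1}: once the hulls of the two processes are matched up, the plan is to conjugate the entire backward Loewner flow by the M\"obius maps $W^{L_t}$ and then read off the traces on the boundary. First I would invoke Proposition~\ref{Prop1} to get the strictly increasing $v$ with $v([0,\infty))=[0,\infty)$ for which $\ha L_t:=W^*(L_{v(t)})=W_{v(t)}(L_{v(t)})$, $0\le t<\infty$, are the $\HH$-hulls of a backward chordal $\SLE_\kappa$ process, where $W_t:=W^{L_t}:\D\conf\HH$ are the M\"obius maps furnished by the construction of $W^*$; this makes sense because $z_\infty=W^{-1}(\infty)$ is the joint point of the process, so $z_\infty\notin S_{L_t}$ for all $t$ by Lemma~\ref{joint}. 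Write $\lambda,\ha\lambda$ for the driving functions of $(L_t)$ and $(\ha L_t)$, and $f_t,f_{t_2,t_1},\beta_t$ (resp.\ $\ha f_t,\ha f_{t_2,t_1},\ha\beta_t$) for the associated backward radial (resp.\ chordal) maps, flows and traces; all of these traces exist for every $\kappa>0$ by Girsanov together with the existence of the backward radial and chordal $\SLE_\kappa$ traces. Applying the defining identity $W^L\circ f^\D_L=f^\HH_{W^L(L)}\circ W$ with $L=L_{v(t)}$ (so that $W^L(L)=\ha L_t$ and $f^\HH_{W^L(L)}=\ha f_t$) yields $W_{v(t)}\circ f_{v(t)}=\ha f_t\circ W$ on $\D$, i.e.\ $W_s\circ f_s=\ha f_{v^{-1}(s)}\circ W$ on $\D$ for every $s\ge 0$.

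Next I would pass from maps to flows. Using $f_{s_2}=f_{s_2,s_1}\circ f_{s_1}$ and $\ha f_{v^{-1}(s_2)}=\ha f_{v^{-1}(s_2),v^{-1}(s_1)}\circ\ha f_{v^{-1}(s_1)}$ from Lemma~\ref{ft2t1}(i), inserting the previous identity at $s_1$ and at $s_2$, cancelling the conformal map $f_{s_1}$, and continuing analytically to all of $\D$, one gets
$$W_{s_2}\circ f_{s_2,s_1}=\ha f_{v^{-1}(s_2),v^{-1}(s_1)}\circ W_{s_1}\quad\text{on }\D,\qquad s_2\ge s_1\ge 0.$$
I would also record the marked-point relation: $W_s$ sends the root $e^{i\lambda(s)}$ of $\beta_s$ to the root of $\ha\beta_{v^{-1}(s)}$, equivalently $\ha\lambda(t)=W_{v(t)}(e^{i\lambda(v(t))})$, which follows from comparing the covering chordal and radial Loewner equations (cf.\ (4.10) in \cite{RZ}). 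Note $e^{i\lambda(s)}\ne z_\infty$ by Lemma~\ref{joint}, and in fact the pole of $W_s$, namely $f_s(z_\infty)$, is a point of $\TT$ distinct from the root, so $W_s$ is finite and continuous at every point of $\beta_s$.

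Finally, since the backward radial $\SLE(\kappa;-\kappa-6)$ process generates backward radial traces, $f_{v(t_2),v(t_1)}$ extends continuously to $e^{i\lambda(v(t_1))}$ with value $\beta_{v(t_2)}(v(t_1))$, and $\ha f_{t_2,t_1}$ extends continuously to $\lin\HH$; taking $s_i=v(t_i)$ in the flow identity and passing to the boundary point $e^{i\lambda(v(t_1))}$ gives, for $t_2\ge t_1\ge 0$,
$$\ha\beta_{t_2}(t_1)=\ha f_{t_2,t_1}(\ha\lambda(t_1))=\ha f_{t_2,t_1}\big(W_{v(t_1)}(e^{i\lambda(v(t_1))})\big)=W_{v(t_2)}\big(f_{v(t_2),v(t_1)}(e^{i\lambda(v(t_1))})\big)=W_{v(t_2)}\big(\beta_{v(t_2)}(v(t_1))\big).$$
Hence $\ha\beta_t:=W_{v(t)}\circ\beta_{v(t)}\circ v$ is a continuous curve on $[0,t]$ for each $t$ (the pole of $W_{v(t)}$ being off $\beta_{v(t)}$), and the display shows it satisfies $\ha\beta_{t_2}(t_1)=\ha f_{t_2,t_1}(\ha\lambda(t_1))$, so $(\ha\beta_t)$ is precisely the family of backward chordal traces driven by $\ha\lambda$; by Proposition~\ref{Prop1}, $\ha\lambda$ drives a backward chordal $\SLE_\kappa$ process, which is the assertion.

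Since Proposition~\ref{Prop1} carries all the real weight, the only point I expect to require care is the continuous boundary extension---checking that $f_{v(t_2),v(t_1)}$, $W_{v(t_1)}$ and $\ha f_{t_2,t_1}$ each extend continuously to the relevant marked point and that the flow identity, a priori valid only in the interiors $\D$ and $\HH$, survives this extension. This rests on the existence of the backward radial $\SLE(\kappa;-\kappa-6)$ traces and, for $W_{v(t_1)}$, on $z_\infty$ being the joint point, so that $W^{L_t}$ is genuinely defined and finite along $\beta_t$.
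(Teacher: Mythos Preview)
Your proof is correct and follows essentially the same approach as the paper's own argument: invoke Proposition~\ref{Prop1}, use the defining identity $W^L\circ f^\D_L=f^\HH_{W^L(L)}\circ W$ to get $W_t\circ f_t=\ha f_{v^{-1}(t)}\circ W$, pass to flows via Lemma~\ref{ft2t1}, record the marked-point relation $\ha\lambda(t)=W_{v(t)}(e^{i\lambda(v(t))})$ from (4.10) in \cite{RZ}, and then chase the resulting identity down to the boundary to identify $\ha\beta_{t_2}(t_1)$. Your additional care about the continuous boundary extension and the location of the pole of $W_s$ is a welcome elaboration of points the paper leaves implicit.
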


The following proposition is Theorem 6.1 in \cite{RZ}.

\begin{Proposition}
  Let $\kappa\in(0,4]$. Let $z_1\ne z_2\in\TT$. There is a coupling of two backward radial SLE$(\kappa;-\kappa-6)$ processes, one started from $(z_1;z_2)$, the other started from $(z_2;z_1)$, such that the two processes induce the same welding. \label{Prop2}
\end{Proposition}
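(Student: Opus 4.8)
The plan is to build the coupling by realizing both backward radial SLE$(\kappa;-\kappa-6)$ processes inside a single backward chordal SLE$_\kappa$ picture, using the conformal transfer of Proposition \ref{Prop1} (and its trace version, Corollary \ref{Prop1-cor}) together with the symmetry of backward chordal SLE$_\kappa$ weldings established in \cite{RZ}. The key observation is that the welding induced by a backward radial process is a conformally covariant object: if $\phi$ is the welding of the radial process $(L_t)$ started from $(z_0;z_\infty)$ and $W$ is a M\"obius map sending $\D$ to $\HH$ with $W(z_0)=0$, $W(z_\infty)=\infty$, then the welding $\ha\phi$ of the transferred backward chordal process $(W^*(L_{v(t)}))$ satisfies $\ha\phi = W\circ\phi\circ W^{-1}$ on the relevant intervals; this is immediate from $W_{t}\circ f_{t_2,t_1}=\ha f_{v^{-1}(t_2),v^{-1}(t_1)}\circ W_{t_1}$, exactly the identity derived just before Corollary \ref{Prop1-cor}, since $f_\beta(x)=f_\beta(y)$ is preserved under post-composition with the conformal maps $W_t$.

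First I would fix $z_1\ne z_2\in\TT$ and choose two M\"obius transformations: $W^{(1)}$ with $W^{(1)}(\D)=\HH$, $W^{(1)}(z_1)=0$, $W^{(1)}(z_2)=\infty$, and $W^{(2)}$ with $W^{(2)}(\D)=\HH$, $W^{(2)}(z_2)=0$, $W^{(2)}(z_1)=\infty$. Up to a real scaling and translation, which are symmetries of backward chordal SLE$_\kappa$, I may arrange that $M:=W^{(2)}\circ (W^{(1)})^{-1}$ is the map $x\mapsto -c^2/x$ for some $c>0$, i.e.\ the composition is the standard conformal involution of $\HH$ exchanging $0$ and $\infty$. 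Next I would invoke the symmetry of backward chordal SLE$_\kappa$ welding: by Theorem 6.1-type results in \cite{RZ} (the chordal analogue underlying Proposition \ref{Prop2}, or rather the statement from which Proposition \ref{Prop2} is meant to be derived), a single backward chordal SLE$_\kappa$ process, after pushing forward by the inversion $M$, has the same law; more precisely there is a coupling of two backward chordal SLE$_\kappa$ processes related by $M$ that induce the same welding $\psi$ of $\R$ (with $\psi$ an involution with the appropriate fixed points). I would then pull this coupling back through $W^{(1)}$ and $W^{(2)}$ respectively: by Proposition \ref{Prop1} applied in reverse (every backward chordal SLE$_\kappa$ hull family arises, after a time change, as the $W^*$-image of a backward radial SLE$(\kappa;-\kappa-6)$ family — this is the content of Theorem 4.7 in \cite{RZ}, of which Proposition \ref{Prop1} is the stated direction), the first chordal process comes from a backward radial SLE$(\kappa;-\kappa-6)$ started from $(z_1;z_2)$ and the second from one started from $(z_2;z_1)$. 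Since $W^{(2)} = M\circ W^{(1)}$, the two radial weldings are $\phi_1 = (W^{(1)})^{-1}\circ\psi\circ W^{(1)}$ and $\phi_2 = (W^{(2)})^{-1}\circ\psi\circ W^{(2)} = (W^{(1)})^{-1}\circ M^{-1}\circ\psi\circ M\circ W^{(1)}$, and the chordal symmetry says $M^{-1}\circ\psi\circ M = \psi$, whence $\phi_1=\phi_2$ as involutions of $\TT$.

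The step I expect to be the main obstacle is matching the time parametrizations and making sure the two radial processes are genuinely SLE$(\kappa;-\kappa-6)$ processes started from the correct pairs of points — i.e.\ establishing the \emph{converse} to Proposition \ref{Prop1}, that every backward chordal SLE$_\kappa$ picture lifts (after an increasing reparametrization $v$) to a backward radial SLE$(\kappa;-\kappa-6)$ picture with the prescribed root and joint point. One must check that the lift produces the $\cot_2$ drift with $\rho=-\kappa-6$ (and not some other $\rho$), which requires computing the It\^o drift of $W_t^{-1}$ applied to the chordal driving Brownian motion; this is where the specific value $-\kappa-6$ and the condition $\kappa\le 4$ enter (it is exactly the value making the welding well-defined, cf.\ $\rho\le-\frac\kappa2-2$). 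A secondary subtlety is that the weldings $\phi_i$ are a priori only defined on the increasing union $\bigcup_t S_{\beta_t^{(i)}}$, which by Lemma \ref{union-support} and Lemma \ref{joint} is $\TT\sem\{z_j\}$; one must verify that both processes have the same joint point and then that the extensions to involutions of all of $\TT$ (fixing $e^{i\lambda^{(i)}(0)}$ and the joint point) agree, but this is forced once $\phi_1=\phi_2$ on the common dense subset together with the identification of fixed points $\{z_1,z_2\}$ for both. I would carry out the drift computation carefully using the covering-map form of the backward radial Loewner equation and the relation $W_t(e^{i\lambda(t)})=\ha\lambda(v(t))$ recorded in the excerpt, and otherwise assemble the pieces as above.
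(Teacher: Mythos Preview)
The paper does not prove this proposition at all; it simply records it as Theorem 6.1 of \cite{RZ}. So there is no in-paper argument to compare against --- you are supplying a derivation where the paper only supplies a citation.

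Your reduction strategy (transfer both radial processes to backward chordal SLE$_\kappa$ via Proposition~\ref{Prop1}, invoke a chordal welding symmetry, then pull back) is the natural heuristic and is close in spirit to how \cite{RZ} organizes things. Two points deserve care beyond what you flag. First, the chordal input you write down, ``$M^{-1}\circ\psi\circ M=\psi$'' for a \emph{single} backward chordal SLE$_\kappa$ welding $\psi$ and a fixed inversion $M(x)=-c^2/x$, is a strong almost-sure invariance, not merely an equality in law; you should be explicit that this is exactly the content of the chordal welding-symmetry theorem in \cite{RZ} (and that the coupling of the two chordal processes is what produces it, rather than the other way around). Second, as you already note, the argument needs the converse direction of Proposition~\ref{Prop1}/Theorem~4.7 of \cite{RZ} --- that a backward chordal SLE$_\kappa$, pulled back through a M\"obius map $\HH\to\D$, is a backward radial SLE$(\kappa;-\kappa-6)$ with the prescribed start and force point --- so that the second chordal process you build really is the $W^{(2)}$-image of a backward radial SLE$(\kappa;-\kappa-6)$ started from $(z_2;z_1)$. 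With those two inputs granted, your welding-transfer identity $\ha\phi=W\circ\phi\circ W^{-1}$ and the computation $\phi_1=\phi_2$ go through. But since both of those inputs live in \cite{RZ} alongside Theorem~6.1 itself, your argument is really a repackaging of the same circle of results rather than an independent route.
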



\no{\bf Remark.} If $\delta=\frac 4\kappa \sigma(\frac\rho 2+1)+1\in(1,2)$, we may define a forward (resp.\ backward) radial SLE$(\kappa;\rho)$ process in the case $\sigma=1$ (resp.\ $\sigma=-1$) such that the time interval of the process is $[0,\infty)$. First, the second remark in Appendix \ref{B'} says that a radial Bessel process $(X_t)$ of dimension $\delta>0$ started from $(x-y)/2$ can be defined for all $t\ge 0$. Second, the transition density of $(X_t)$ given by Proposition (\ref{densityY'}) (which is also true in the case $\delta\in(0,2)$) shows that, if $\delta>1$, then $\cot(X_t)$, $0\le t<\infty$, is locally integrable. Thus, if $\delta>1$, we may let $q(t)=y-\sigma\int_0^t \cot_2(Z_s)ds$ and $\lambda(t)=q(t)+Z_t$, $0\le t<\infty$, where $Z_t=2X_{\frac \kappa 4 t}$, and use $\lambda$ as the driving function to define a forward (resp.\ backward) radial SLE$(\kappa;\rho)$ process. The corresponding stationary processes are similarly defined. Lemma \ref{reversal-driving'} still holds thanks to the reversibility of the stationary radial Bessel process in the case $\delta\in (1,2)$. But Girsanov's theorem does not apply beyond the time that $\lambda(t)-q(t)$ hits $\{0,2\pi\}$.

\section{Normalized Backward Radial Loewner Trace} \label{normalized}
In general, a backward chordal (resp.\ radial) Loewner process does not naturally generate a single curve even if the backward chordal (resp.\ radial) traces $(\beta_t)$ exist, because they may not satisfy $\beta_{t_1}\subset\beta_{t_2}$ when $t_1\le t_2$. A normalization method was introduced in \cite{RZ} to define a normalized backward chordal Loewner trace (under certain conditions). In this section we will define a normalized backward radial Loewner trace.


\begin{Lemma} Let $\lambda\in C([0,\infty))$, and $(f_{t_2,t_1})$ be the backward radial Loewner flow driven by $\lambda$.
Define $F_{t_2,t_1}=e^{t_2} f_{t_2,t_1}$, $t_2\ge t_1\ge 0$. Then for every fixed $t_0\in[0,\infty)$, $F_{t,t_0}$ converges locally uniformly in $\D$ as $t\to\infty$ to a conformal map, denoted by $F_{\infty,t_0}$, which satisfies that $F_{\infty,t_0}(0)=0$, $F_{\infty,t_0}'(0)=e^{t_0}$, and
\BGE F_{\infty,t_2}\circ f_{t_2,t_1}=F_{\infty,t_1},\quad t_2\ge t_1\ge 0.\label{Ft2t1}\EDE
Moreover, let  $G_s=I_{\TT}\circ F_{\infty,-s}^{-1}\circ I_{\TT}$ and  $K_s=\C\sem I_\TT\circ F_{\infty,-s}(\D)$, $-\infty<s\le 0$. Then $G_s$ and $K_s$ are whole-plane Loewner maps and hulls driven by $\lambda(-s)$, $-\infty<s\le 0$.
  \label{lemma1}
\end{Lemma}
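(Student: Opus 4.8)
The plan is to first establish convergence of $F_{t,t_0}=e^t f_{t,t_0}$ as $t\to\infty$, then identify the limit's normalization and the functional equation, and finally reverse time and apply $I_\TT$ to recognize the whole-plane Loewner maps. For the convergence, I would fix $t_0$ and study $t\mapsto F_{t,t_0}(z)$ via the backward radial Loewner equation (\ref{radial-backward-2}). Writing $h_t(z)=F_{t,t_0}(z)=e^t f_{t,t_0}(z)$, differentiating gives $\pa_t h_t = e^t f_{t,t_0} + e^t \pa_t f_{t,t_0} = h_t - e^t f_{t,t_0}\frac{e^{i\lambda(t)}+f_{t,t_0}}{e^{i\lambda(t)}-f_{t,t_0}} = h_t\cdot\left(1 - \frac{e^{i\lambda(t)}+f_{t,t_0}}{e^{i\lambda(t)}-f_{t,t_0}}\right) = h_t\cdot\frac{-2f_{t,t_0}}{e^{i\lambda(t)}-f_{t,t_0}}$. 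Since $f_{t,t_0}$ maps $\D$ into $\D$ fixing no interior point with $f_{t,t_0}(0)$ staying away from $\TT$ in a controlled way, and since for $z$ in a fixed compact subset of $\D$ we have $|f_{t,t_0}(z)|\to 0$ as $t\to\infty$ (the hulls $L_t$ fill up, or more directly $f_{t,t_0}'(0)=e^{-(t-t_0)}\to 0$ so by Schwarz $|f_{t,t_0}(z)|\le |z|e^{-(t-t_0)}$), the logarithmic derivative $\pa_t\log h_t$ is absolutely integrable in $t$ on $[t_0,\infty)$, uniformly on compacts. Hence $\log h_t(z)$ converges locally uniformly, so $F_{t,t_0}\to F_{\infty,t_0}$ locally uniformly in $\D$; the limit is conformal (nonconstant, as a locally uniform limit of injective holomorphic maps that do not degenerate, since $F_{t,t_0}'(0)=e^t f_{t,t_0}'(0)=e^{t_0}$ is constant in $t$).

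The normalization $F_{\infty,t_0}(0)=0$ is immediate from $f_{t,t_0}(0)=0$ wait --- actually $f_{t,t_0}(0)$ need not be $0$; rather one uses $f_{L_t}=f_t$ fixes nothing special, so instead I track $F_{\infty,t_0}(0)=\lim e^t f_{t,t_0}(0)$, which converges to some point I must show is $0$: since $|f_{t,t_0}(0)|$ decays, but $e^t$ grows, the cleaner route is to note $F_{t,t_0}$ maps $\D$ into $e^t\D$ and a Koebe-type/Schwarz argument pins the limit; alternatively, by Lemma \ref{ft2t1}(ii) $f_{t,t_0}$ is a backward radial Loewner map, hence $f_{t,t_0}=f^\D_{L_{t,t_0}}$ with $f^\D_L(0)$ being the image of $0$, and by radial normalization $g_{L}(0)=0$, so $f_L(0)=0$; thus $f_{t,t_0}(0)=0$ and $F_{\infty,t_0}(0)=0$, $F_{\infty,t_0}'(0)=e^{t_0}$ follows from $g_{L_{t,t_0}}'(0)=e^{t-t_0}$. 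The identity (\ref{Ft2t1}) follows by multiplying $f_{t,t_1}=f_{t,t_2}\circ f_{t_2,t_1}$ (Lemma \ref{ft2t1}(i)) by $e^t$ and letting $t\to\infty$: $F_{t,t_1}=(e^t f_{t,t_2})\circ f_{t_2,t_1}\to F_{\infty,t_2}\circ f_{t_2,t_1}$.

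For the last assertion, set $G_s=I_\TT\circ F_{\infty,-s}^{-1}\circ I_\TT$ and $K_s=\C\sem I_\TT(F_{\infty,-s}(\D))$ for $s\le 0$. First check $G_s$ is well defined: $F_{\infty,-s}:\D\conf F_{\infty,-s}(\D)$, a simply connected domain containing $0$ with $F_{\infty,-s}'(0)=e^{-s}$, so $I_\TT(F_{\infty,-s}(\D))$ is a simply connected domain containing $\infty$, i.e.\ $\ha\C\sem K_s$ with $K_s$ a $\C$-hull; and $G_s=g_{K_s}$ with $g_{K_s}(\infty)=\infty$, $g_{K_s}'(\infty)=\lim z/G_s(z)$. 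A direct computation: $I_\TT\circ F_{\infty,-s}^{-1}\circ I_\TT(z)\sim (\overline{F_{\infty,-s}^{-1}(1/\bar z)})^{-1}$, and using $F_{\infty,-s}'(0)=e^{-s}$ one gets $g_{K_s}'(\infty)=e^{-s}$, so $\ccap(K_s)=-s=s'$... I should double-check the sign so that $\ccap(K_s)=s$ as claimed; the convention $g_K'(\infty)=\lim z/g_K(z)$ with $F_{\infty,-s}'(0)=e^{-s}$ gives $\ccap(K_s)=s$ after the inversion flips the exponent. Then translate (\ref{Ft2t1}) through $I_\TT$: for $s_2\ge s_1$, i.e.\ $-s_2\le -s_1$, (\ref{Ft2t1}) reads $F_{\infty,-s_1}=F_{\infty,-s_2}\circ f_{-s_1,-s_2}$, hence $G_{s_1}\circ G_{s_2}^{-1}= I_\TT\circ F_{\infty,-s_1}^{-1}\circ F_{\infty,-s_2}\circ I_\TT = I_\TT\circ f_{-s_1,-s_2}^{-1}\circ I_\TT = I_\TT\circ f_{-s_2,-s_1}\circ I_\TT$, which by Lemma \ref{ft2t1}(ii) (reading $f_{-s_1,-s_2}$, $s_2\le s_1$ as forward radial maps driven by $\lambda((-s_2)+\cdot)$... ) is precisely a forward radial Loewner map; differentiating the whole-plane equation and matching with the radial Loewner PDE for the increments $G_{s_1}\circ G_{s_2}^{-1}$ identifies the driving function as $\lambda(-s)$, and the initial condition $\lim_{s\to-\infty}e^s G_s(z)=z$ follows from $F_{\infty,-s}'(0)=e^{-s}$ together with $\lim_{s\to-\infty}F_{\infty,-s}(z)/e^{-s}=z$ (Koebe distortion, since the domains $F_{\infty,-s}(\D)$ shrink to $\{0\}$ at rate $e^{-s}$).

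The main obstacle I expect is the convergence step: controlling $\int_{t_0}^\infty \left|\frac{-2 f_{t,t_0}(z)}{e^{i\lambda(t)}-f_{t,t_0}(z)}\right|dt$ uniformly on compact subsets of $\D$. The Schwarz-lemma bound $|f_{t,t_0}(z)|\le |z|\,e^{-(t-t_0)}$ gives exponential decay of the numerator, and one needs a matching lower bound $|e^{i\lambda(t)}-f_{t,t_0}(z)|\geq c>0$ on compacts, which holds since $|f_{t,t_0}(z)|\le|z|<1$ stays bounded away from $\TT$; together these make the integrand $O(e^{-(t-t_0)})$, giving the desired uniform convergence. The identification of the driving function of the whole-plane process in the last part is then a routine matching of the Loewner PDEs, using that $I_\TT$ conjugates the backward radial equation at $e^{i\lambda}$ into the whole-plane equation at $e^{i\lambda}$.
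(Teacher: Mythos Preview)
Your approach is genuinely different from the paper's and, once patched, is more direct. The paper argues by normal families: it extracts subsequential limits of $(F_{t,t_0})$ via Koebe distortion, builds the candidate whole-plane maps $G_s$ from any such limit, and then invokes \emph{uniqueness} of the whole-plane Loewner solution to conclude that all subsequential limits agree, hence the full limit exists. You instead integrate the ODE for $\log F_{t,t_0}$ directly and show the integral converges, which yields convergence in one stroke without the subsequence--uniqueness loop. That is a real simplification.

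There is, however, one incorrect step you must fix. The bound ``by Schwarz $|f_{t,t_0}(z)|\le |z|\,e^{-(t-t_0)}$'' is false: the Schwarz lemma gives only $|f_{t,t_0}(z)|\le|z|$, and for univalent self-maps of $\D$ fixing $0$ the inequality $|f(z)|\le |f'(0)|\,|z|$ can fail (already for the backward radial flow with constant driving one computes $|f_t(-r)|>r e^{-t}$ for small $t>0$). What you actually need is the Koebe growth theorem applied to $e^{t-t_0}f_{t,t_0}$, which gives
\[
|f_{t,t_0}(z)|\le \frac{|z|}{(1-|z|)^2}\,e^{-(t-t_0)},\qquad z\in\D.
\]
Combined with the trivial bound $|e^{i\lambda(t)}-f_{t,t_0}(z)|\ge 1-|z|$ (Schwarz now used correctly), this makes $|\pa_t\log F_{t,t_0}(z)|\le C_r\,e^{-(t-t_0)}$ on $\{|z|\le r\}$, and your integrability argument goes through.

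Two further comments. First, in the whole-plane identification you have an index slip: with $s_1\le s_2\le 0$, (\ref{Ft2t1}) (taking $t_2=-s_1\ge t_1=-s_2$) gives $F_{\infty,-s_1}\circ f_{-s_1,-s_2}=F_{\infty,-s_2}$, hence $G_{s_1}\circ G_{s_2}^{-1}=I_\TT\circ f_{-s_1,-s_2}\circ I_\TT=f_{-s_1,-s_2}$ (the flow commutes with $I_\TT$), not $f_{-s_2,-s_1}$. Second, your verification of the initial condition $\lim_{s\to-\infty} e^s G_s(z)=z$ is garbled: the domains $F_{\infty,-s}(\D)$ \emph{expand} to $\C$ as $s\to-\infty$, they do not shrink. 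What is needed is $e^t F_{\infty,t}^{-1}(w)\to w$ locally uniformly in $\C$; this follows because $e^t F_{\infty,t}^{-1}$ is univalent on $F_{\infty,t}(\D)\supset\{|w|<e^t/4\}$ with value $0$ and derivative $1$ at the origin, so by Koebe distortion it is locally bounded and every locally uniform limit is an entire univalent map fixing $0$ with derivative $1$, hence the identity. This is the step the paper carries out via $\ha F_{\infty,t}(z)=F_{\infty,t}(e^{-t}z)$ and Lemma~\ref{domain convergence*}.
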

\begin{proof}
Lemma \ref{ft2t1} (ii) implies that, if $t_2\ge t_1\ge 0$, then $f_{t_2,t_1}$ is a conformal map on $\D$ with $f_{t_2,t_1}(0)=0$ and $f_{t_2,t_1}'(0)=e^{-(t_2-t_1)}$. Thus, every $F_{t_2,t_1}$ is a conformal map on $\D$ that satisfies $F_{t_2,t_1}(0)=0$ and $F_{t_2,t_1}'(0)=e^{t_1}$. Koebe's distortion theorem (c.f.\ \cite{Ahl}) implies that, for every fixed $t_1$, $(F_{t_2,t_1})_{t_2\ge t_1}$ is a normal family. Let $S$ be a countable unbounded subset of $[0,\infty)$, and write $S_{\ge t}=\{x\in S:x\ge t\}$ for every $t\ge 0$.
Using a diagonal argument, we can find a positive sequence $t_n\to \infty$ such that for any $x\in S$, $(F_{t_n,x})$ converges locally uniformly in $\D$. Let $F_{\infty,x}$ denote the limit. Lemma \ref{domain convergence*} implies that $F_{\infty,x}$ is a conformal map on $\D$, and satisfies $F_{\infty,x}(0)=0$ and $F_{\infty,x}'(0)=e^{x}$.

Let $x_2\ge x_1\in S$. From $f_{t_n,x_2}\circ f_{x_2,x_1}=f_{t_n,x_1}$ we conclude that $F_{\infty,x_2}\circ f_{x_2,x_1}=F_{\infty,x_1}$. For $t\in[0,\infty)$, choose $x\in S_{\ge t}$ and define the conformal map $F_{\infty,t}=F_{\infty,x}\circ f_{x,t}$ on $\D$. Lemma \ref{ft2t1} (i) and $F_{\infty,x_2}\circ f_{x_2,x_1}=F_{\infty,x_1}$ for $x_2\ge x_1\in S$ imply that the definition of $F_{\infty,t}$ does not depend on the choice of $x\in S_{\ge t}$, and (\ref{Ft2t1}) holds.

From (\ref{radial-backward-2}) we see that $f_{t_2,t_1}$ commutes with the reflection $I_\TT(z)=1/\bar z$. Since $f_{t_2,t_1}^{-1}=f_{t_1,t_2}$, using (\ref{Ft2t1}) we get $G_{s_1}=f_{-s_1,-s_2}\circ G_{s_2}$ if $s_1\le s_2\le 0$. From (\ref{radial-backward-2}) we see that $G_{s}$ satisfies the equation
\BGE \pa_s G_{s}(z)=G_{s}(z)\frac{e^{i\lambda(-s)}+G_{s}(z)}{e^{i\lambda(-s)}-G_{s}(z)},\quad -\infty<s\le 0.\label{paG}\EDE

Let $\ha F_{\infty,t}(z)=F_{\infty,t}(e^{-t} z)$, $t\ge 0$. Then each $\ha F_{\infty,t}$ is a conformal map defined on $e^t\D $, and satisfies $\ha F_{\infty,t}(0)=0$ and $\ha F_{\infty,t}'(0)=1$. As $t\to\infty$, $e^t\D\dto \C$ (c.f.\ Definition \ref{def-lim}). Koebe's distortion theorem implies that $|\ha F_{\infty,t}(z)|\le \frac{|z|}{(1-e^{-t}|z|)^2}$ for $z\in e^t\D$. Thus, for every $r>0$, there exists $t_0\in\R$ such that, if $t\ge t_0$, then $|\ha F_{\infty,t}|\le 2r$ on $\{|z|\le r\}$. Therefore, every sequence $(t_n)$, which tends to $\infty$, contains a subsequence $(t_{n_k})$ such that $\ha F_{\infty,t_{n_k}}$ converges locally uniformly in $\C$. Applying Lemma \ref{domain convergence*}, we see that the limit function is a conformal map on $\C$, which fixes $0$ and has derivative $1$ at $0$. Such conformal map must be the identity. Hence $\ha F_{\infty,t}\luto \id$ in $\C$ as $t\to\infty$. Applying Lemma \ref{domain convergence*} again, we see that $e^t F_{\infty,t}^{-1}(z)\luto \id$ in $\C$ as $t\to\infty$. Thus, $\lim_{s\to-\infty} e^{s} G_{s}(z)=z$ for any $z\in\C\sem\{0\}$, which together with (\ref{paG}) implies that $G_s$, $-\infty<s\le 0$, are whole-plane Loewner maps driven by $\lambda(-s)$. The $K_s$ are the corresponding hulls because $K_s=\C\sem G_s^{-1}(\D^*)$.

It remains to show that, for any $t\in[0,\infty)$, $F_{x,t}\luto F_{\infty,t}$ in $\D$ as $x\to \infty$. Assume that this is not true for some $t_0\in[0,\infty)$. Since $(F_{x,t_0})_{x\ge t_0}$ is a normal family, there exists $x_n\to\infty$ such that $F_{x_n,t_0}$ converges locally uniformly in $\D$ to a function other than $F_{\infty,t_0}$. Let $\til F_{\infty,t_0}$ denote the limit. Let $S=\N\cup\{t_0\}$. By passing to a subsequence, we may assume that, for every $t\in S$, $F_{x_n,t}\luto \til F_{\infty,t}$ in $\D$. Now we may repeat the above construction to define $\til F_{\infty,t}$ for every $t\in[0,\infty)$. The previous argument shows that $I_\TT\circ\til F_{\infty,-t}^{-1}\circ I_\TT$, $-\infty<t\le 0$, are the whole-plane Loewner maps driven by $\lambda(-t)$, $-\infty<t\le 0$. Since the same is true for $I_{\TT}\circ F_{\infty,-t}^{-1}\circ I_{\TT}$, we get $\til F_{\infty,t}=F_{\infty,t}$ for every $t$, which contradicts that $\til F_{\infty,t_0}\ne  F_{\infty,t_0}$. Thus, $F_{x,t}\luto F_{\infty,t}$ in $\D$ as $x\to \infty$.
\end{proof}

\begin{Lemma}
Let $\lambda\in C([0,\infty))$. Let $(F_{\infty,t})_{t\ge 0}$ be given by the above lemma. Suppose the backward radial Loewner process driven by $\lambda$ generates a family of backward radial Loewner traces $\beta_t$, $0\le t<\infty$, and
  \BGE \forall t_0\in[0,\infty),\quad \exists t_1\in(t_0,\infty),\quad \beta_{t_1}([0,t_0])\subset\D.  \label{beta>0}\EDE
Then every $F_{\infty,t}$ extends continuously (in the spherical metric) to $\lin\D$, and there is a continuous curve $\beta(t)$, $0\le t<\infty$, with $\lim_{t\to\infty}\beta(t)=\infty$ such that \BGE \beta(t)=F_{\infty,t_0}(\beta_{t_0}(t)),\quad t_0\ge t\ge 0;\label{beta1}\EDE
and for any $t\ge 0$, $F_{\infty,t}(\D)$ is the component of $\C\sem \beta([t,\infty))$ that contains $0$.
Furthermore, $\gamma(s):=I_\TT(\beta(-s))$, $-\infty<s\le 0$, is the whole-plane Loewner trace driven by $\lambda(-s)$. \label{lemma2}
\end{Lemma}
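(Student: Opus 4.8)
The plan is to build the curve $\beta$ out of the maps $F_{\infty,t}$ and the traces $\beta_t$, then to identify the domains $\Omega_t:=F_{\infty,t}(\D)$ topologically with complementary components of $\beta$, and finally to transfer everything to the whole-plane side via the reflection $I_\TT$.

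\textbf{Construction of $\beta$.} If $\beta_{t_1}([0,t])\subset\D$ for some $t_1>t$, then $\beta_{t_1'}([0,t])\subset\D$ for every $t_1'\ge t_1$, since $\beta_{t_1'}(s)=f_{t_1',t_1}(\beta_{t_1}(s))$ by (\ref{backward-trace}) and $f_{t_1',t_1}(\D)\subset\D$. Hence (\ref{beta>0}) gives, for each $t$, a threshold $T(t)>t$ with $\beta_{t_1}([0,t])\subset\D$ whenever $t_1\ge T(t)$, and I set $\beta(t):=F_{\infty,t_1}(\beta_{t_1}(t))$ for any such $t_1$. This is independent of $t_1$: for $t_1'\ge t_1\ge T(t)$, $F_{\infty,t_1'}(\beta_{t_1'}(t))=F_{\infty,t_1'}(f_{t_1',t_1}(\beta_{t_1}(t)))=F_{\infty,t_1}(\beta_{t_1}(t))$ by (\ref{Ft2t1}), valid because $\beta_{t_1}(t)\in\D$. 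On each fixed $[0,t_0]$, using the single conformal map $F_{\infty,T(t_0)}$, $\beta$ is a composition of continuous maps into $\D$, so $\beta$ is a continuous curve on $[0,\infty)$. (The full identity (\ref{beta1}), including cases with $\beta_{t_0}(t)\in\TT$, will follow at the end by expanding $F_{\infty,t_0}=F_{\infty,t_1}\circ f_{t_1,t_0}$ on $\lin\D$, once the boundary extension of $F_{\infty,t_0}$ is available.)

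\textbf{Exhaustion, escape to $\infty$, and the topological identification.} By Koebe's $\tfrac14$-theorem, $\Omega_t\supset\{|z|<e^t/4\}$, so $\bigcup_{n\ge0}\Omega_{t+n}=\C$. Next, $\beta([t,\infty))\cap\Omega_t=\emptyset$: for $\sigma\ge t$ pick $t_1\ge\max(\sigma,T(\sigma))$; by Lemma \ref{ft2t1}(iii) and the definition of the backward traces, the $\D$-hull $L_{t_1,t}$ of $f_{t_1,t}$ is the one generated by the curve $\beta_{t_1}([t,t_1])$, and $\Omega_t=F_{\infty,t_1}(\D\sem L_{t_1,t})$; since $\beta_{t_1}(\sigma)$ is a point of that curve lying in $\D$, it lies in $L_{t_1,t}$, so $\beta(\sigma)=F_{\infty,t_1}(\beta_{t_1}(\sigma))\notin\Omega_t$. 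If $\beta$ did not tend to $\infty$, some bounded $\beta(s_k)$ with $s_k\to\infty$ would lie in $\Omega_{t+n}$ for $n$ large while $s_k\ge t+n$ — impossible; so $\beta(t)\to\infty$ and $\beta([t,\infty))$ is closed in $\C$. Consequently the component $U$ of $\C\sem\beta([t,\infty))$ containing $0$ contains $\Omega_t$. For the reverse inclusion, suppose $z_1\in U\sem\Omega_t$ and take a path in $U$ from $0$ to $z_1$; its first exit point $\zeta$ from $\Omega_t$ lies in $\partial\Omega_t$ and not in $\beta([t,\infty))$. Let $m\ge1$ be minimal with $\zeta\in\Omega_{t+m}$ (finite by the exhaustion). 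Then $\zeta\notin\Omega_{t+m-1}$ while $\zeta\in\lin{\Omega_t}\subset\lin{\Omega_{t+m-1}}$, so $\zeta\in\partial\Omega_{t+m-1}$, and also $\zeta\in\Omega_{t+m}\sem\Omega_{t+m-1}=F_{\infty,t+m}(L_{t+m,t+m-1})$. As $F_{\infty,t+m}$ is a homeomorphism of $\D$ onto $\Omega_{t+m}$, the point $F_{\infty,t+m}^{-1}(\zeta)$ lies on $\partial_\D L_{t+m,t+m-1}$, hence on the generating curve $\beta_{t+m}([t+m-1,t+m])$, and being interior to $\D$ it equals $\beta_{t+m}(\rho)$ with $\rho<t+m$; but then $\zeta=F_{\infty,t+m}(\beta_{t+m}(\rho))=\beta(\rho)\in\beta([t,\infty))$, a contradiction. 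Thus $\Omega_t=U$, which is the third assertion.

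\textbf{Boundary extension and the whole-plane trace.} Since $\beta$ is a continuous curve with $\beta(t)\to\infty$, $\beta([t,\infty])$ is a Peano continuum in $\ha\C$, so $\partial\Omega_t$ is locally connected (Torhorst's theorem) and $F_{\infty,t}$ extends continuously to $\lin\D$ in the spherical metric (Carathéodory's theorem; its boundary values may include $\infty$, as $\Omega_t$ can be unbounded). This gives (\ref{beta1}) in full, and in particular $F_{\infty,t}(e^{i\lambda(t)})=\beta(t)$: expand $F_{\infty,t}=F_{\infty,t_1}\circ f_{t_1,t}$ on $\lin\D$ (the hull of $f_{t_1,t}$ being generated by a curve, $f_{t_1,t}$ also extends continuously) and use $f_{t_1,t}(e^{i\lambda(t)})=\beta_{t_1}(t)$ from the definition of the backward traces. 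Finally, by Lemma \ref{lemma1} the maps $G_s=I_\TT\circ F_{\infty,-s}^{-1}\circ I_\TT$ and hulls $K_s$ are the whole-plane Loewner maps and hulls driven by $\lambda(-s)$; the continuous extension of $F_{\infty,-s}$ to $\lin\D$ gives one of $G_s^{-1}=I_\TT\circ F_{\infty,-s}\circ I_\TT$ to $\lin{\D^*}$, and $\gamma(s):=G_s^{-1}(e^{i\lambda(-s)})=I_\TT(F_{\infty,-s}(e^{i\lambda(-s)}))=I_\TT(\beta(-s))$ is a continuous curve with $\gamma(-\infty)=I_\TT(\infty)=0$. Hence the process generates the whole-plane trace $\gamma(s)=I_\TT(\beta(-s))$, as desired.

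\textbf{Expected main obstacle.} The delicate step is the identification $\Omega_t=U$ above. Because the backward radial traces $\beta_{t_1}$ are not assumed $\D$-simple, they cannot be treated as curves directly bounding the $\Omega$'s; one must work throughout with the generated $\D$-hulls $L_{t_1,t}$ and track how boundary points of $\Omega_t$ get exposed on $\beta$ as $t_1$ increases. This is precisely where hypothesis (\ref{beta>0}) is indispensable — without it $\beta$ is not even well defined — while the remaining ingredients (properness of conformal maps, Koebe's and Carathéodory's theorems, the Torhorst theorem, and the reflection identities of Lemma \ref{lemma1}) are routine.
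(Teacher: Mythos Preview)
Your proof is correct and follows essentially the same strategy as the paper's: construct $\beta$ from the hypothesis, show each $\beta(\sigma)$ lies outside $\Omega_t$ for $\sigma\ge t$, identify boundary points of $\Omega_t$ with points on $\beta$ via the inclusion $\partial L_{t_2,t_1}\cap\D\subset\beta_{t_2}([t_1,t_2])$, deduce local connectedness, and pass to the whole-plane side by reflection. The differences are organizational rather than substantive. The paper carries out the boundary argument on the $K_s$ side (showing $\partial K_s\subset\gamma([-\infty,s])$ by using $\bigcap_s K_s=\{0\}$ to locate an index $s_0$ with $z_0\notin K_{s_0}$), obtains the continuous extension of $F_{\infty,t}$ from that, and only afterwards reads off the description of $F_{\infty,t}(\D)$ from the properties of the whole-plane trace; you instead work directly on the $\Omega_t$ side, use the Koebe exhaustion $\bigcup_n\Omega_{t+n}=\C$ in place of $\bigcap_s K_s=\{0\}$, establish $\Omega_t=U$ first, and then invoke Torhorst's theorem explicitly to get local connectedness. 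Both routes rely on the same key mechanism, and your explicit appeal to Torhorst is exactly what underlies the paper's terse ``which implies that $\partial K_s$ is locally connected''.
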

\begin{proof}
For every $t_0\in[0,\infty)$, using (\ref{beta>0}) we may pick $t_1\in(t,T)$ such that $\beta_{t_1}([0,t_0])\subset\D$, and define
$ \beta(t)=F_{\infty,t_1}\circ \beta_{t_1}(t)$,  $ t\in[0,t_0]$. From (\ref{backward-trace}) and (\ref{Ft2t1}) we see that the definition of $\beta$ does not depend on $t_0$ and $t_1$, and $\beta$ is continuous on $[0,\infty)$.

Let $L_{t_2,t_1}=\D\sem f_{t_2,t_1}(\D)$, $t_2\ge t_1\ge 0$. Then $L_{t_2,t_1}$ is the $\D$-hull generated by $ \beta_{t_2}([t_1,t_2])$, i.e., $\D\sem L_{t_2,t_1}$ is the component of $\D\sem \beta_{t_2}([t_1,t_2])$ that contains $0$. Hence $\pa L_{t_2,t_1}\cap\D\subset \beta_{t_2}([t_1,t_2])$.

Let $G_s$ and $K_s$, $-\infty<s\le 0$, be given by the previous lemma. Then $(K_s)$ is an increasing family with $\bigcap_{s\le 0} K_s=\{0\}$. If $s_2\le s_1\le 0$, from $F_{\infty,-s_1}=F_{\infty,-s_2}\circ f_{-s_2,-s_1}$ and $f_{-s_2,-s_1}(\D)=\D\sem L_{-s_2,-s_1}$, we see that $K_{s_1}\sem K_{s_2}=I_{\TT}\circ F_{\infty, -s_2}(L_{-s_2,-s_1})$.

Fix $t_2\ge t_1\ge 0$. Choose $T>t_2$ such that $\beta_T([0,t_2])\subset\D$. Then $\beta(t_2)=F_{\infty,T}\circ \beta_T(t_2)$. Since $f_{T,t_1}:\D\conf \D\sem L_{T,t_1}$, $L_{T,t_1}$ is the $\D$-hull generated by $\beta_T([t_1,T])$, and $t_2\in[t_1,T]$, we see that $\beta_T(t_2)\not\in f_{T,t_1}(\D)$. So $\beta(t_2)\not\in F_{\infty,T}\circ f_{T,t_1}(\D)=F_{\infty,t_1}(\D)$. This implies that, if $s_2\le s_1\le 0$, then $\gamma(s_2)=I_\TT(\beta(-s_2))\in\C\sem I_\TT \circ F_{\infty,-s_1}(\D)=K_{s_1}$. Thus, $\gamma((-\infty,s])\subset K_s$ for every $s\le 0$. Since $\bigcap_{s\le 0} K_s=\{0\}$, we get $\lim_{s\to-\infty}\gamma(s)=0$.

Define $\gamma(-\infty)=0$. Let $s\le 0$. 
Let $z_0\in\pa K_s$. If $z_0=0$, then $z_0=\gamma(-\infty)\in\gamma([-\infty,s])$. Now suppose $z_0\ne 0$. Since $(K_s)$ is increasing and $\bigcap_{s\le 0} K_s=\{0\}$, there is $s_0<s$ such that $z_0\not\in K_{s_0}$. Thus, $z_0\in K_s\sem K_{s_0}=I_{\TT}\circ F_{\infty,-s_0}(L_{-s_0,-s})$. From $z_0\in\pa K_s$ we see that $w_0:=F_{\infty,-s_0}^{-1}\circ I_\TT(z_0)\in \pa L_{-s_0,-s}\cap \D$. Since $L_{-s_0,-s}$ is the $\D$-hull generated by $\beta_{-s_0}([-s,-s_0])$, there is $t_1\in[-s,-s_0]$ such that $w_0=\beta_{-s_0}(t_1)$. Thus, $z_0=I_\TT\circ F_{\infty,-s_0}(\beta_{-s_0}(t_1))=\gamma(-t_1)\in\gamma ([-\infty,s])$. Thus, $\pa K_s\subset \gamma([-\infty,s])$, which implies that $\pa K_s$ is locally connected. Since $I_\TT\circ F_{\infty,-s}\circ I_\TT:\D^*\conf \ha\C\sem K_s$, we see that $F_{\infty,t}$ extends continuously to $\lin\D$ for each $t\ge 0$ (c.f.\ \cite{Pom-bond}). The equality (\ref{Ft2t1}) holds after continuation, which together with (\ref{backward-trace}) and the definition of $\beta$ implies (\ref{beta1}). Setting $t_1=t=-s$, we see that $\gamma(s)=I_\TT\circ F_{\infty, t}(e^{i\lambda(t)})=G_s^{-1}(e^{i\lambda(-s)})$. Thus, $\gamma(s)$, $-\infty\leq s\le 0$, is the whole-plane Loewner trace driven by $\lambda(-s)$, $-\infty<s\le 0$. This implies that $\lim_{t\to\infty}\beta(t)=I_\TT(\lim_{s\to-\infty}\gamma(s))=\infty$.

Finally, from the properties of the whole-plane Loewner trace, we see that for any $s\ge 0$, $G_s^{-1}(\D^*)$ is the component of $\ha\C\sem \gamma([-\infty,s])$ that contains $0$. Since $G_{-t}=I_\TT\circ F_{\infty,t}^{-1}\circ I_\TT$ and $\gamma(-t)=I_\TT(\beta(t))$, we see that, for any $t\ge 0$, $F_{\infty, t}(\D)$ is the component of $\C\sem \beta([t,\infty))$ that contains $I_\TT(\infty)=0$.
\end{proof}

\begin{Definition}
  The  $\beta(t)$, $0\le t<\infty$, given by the lemma is called the normalized backward radial Loewner trace driven by $\lambda$
\end{Definition}

If the backward radial Loewner traces $\beta_t$ are all $\D$-simple traces, then (\ref{beta>0}) clearly holds because we may always choose $t_1=t_0+1$. Moreover, (\ref{beta1}) implies that for any $t_0>0$, $\beta$ restricted to $[0,t_0)$ is simple. Thus, the whole curve $\beta$  is simple. This implies further that $F_{\infty,t}(\D)=\C\sem\beta([t,\infty))$ for any $t\ge 0$. In particular, $F_{\infty,0}$ maps two arcs on $\TT$ with two common end points onto the two sides of $\beta$. Let $\phi$ be the welding induced by the process. The equality $F_{\infty,0}=F_{\infty,t}\circ f_t$ implies that, if $y=\phi(x)$ then $F_{\infty,0}(x)=F_{\infty,0}(y)\in\beta$. The two fixed points of $\phi$ are mapped to the two ends of $\beta$ such that $e^{i\lambda(0)}$ is mapped to $\beta(0)\in\C$, and the joint point is mapped to $\infty$.

We will prove that (\ref{beta>0}) holds in some other cases. We say that an $\HH$(resp.\ $\D$)-hull $K$ is nice if $S_K$ is an interval on $\R$(resp.\ $\D$), and $f_K$ extends continuously to $S_K$ and maps the interior of $S_K$ into $\HH$ (resp.\ $\D$). This means that $\pa K\cap\HH$ (resp.\ $\pa K\cap\D$) is the image of an open curve in $\HH$ (resp.\ $\D$), whose two ends approach $\R$ (resp.\ $\TT$). It is easy to see that, if $K$ is a nice $\HH$-hull, and $W$ is a M\"obius transformation such that $W(\HH)=\D$ and $0\not\in W(K)$, then $W(K)$ is a nice $\D$-hull.

\begin{Lemma}
  Let $\kappa>4$ and $\rho\le -\frac\kappa 2-2$. Let $(L_t)$ be $\D$-hulls generated by a backward radial SLE$(\kappa;\rho)$ process. Then for every fixed $t_0\in(0,\infty)$, a.s.\ $L_{t_0}$ is nice. \label{nice}
\end{Lemma}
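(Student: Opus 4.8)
The plan is to push the statement through Proposition \ref{Prop1} to the backward chordal setting, where it reduces to the (essentially classical) fact that the hull of a chordal SLE$_\kappa$ curve at a fixed capacity time is nice. First I would reduce $\rho$ to $-\kappa-6$. Since $\rho\le-\frac\kappa2-2$ gives $\delta\ge2$, the radial Bessel process $(\frac12 Z_{\frac4\kappa t})$ a.s.\ stays in a compact subset of $(0,\pi)$ on $[0,t_0]$, so the drift $\frac\rho2\cot_2(\lambda-q)$ in (\ref{kappa-tau-sigma}) with $\sigma=-1$ is a.s.\ bounded on $[0,t_0]$; after localizing, Girsanov's theorem shows that the laws on $[0,t_0]$ of the backward radial SLE$(\kappa;\rho)$ and the backward radial SLE$(\kappa;-\kappa-6)$ processes started from the same $(z;w)$ are mutually absolutely continuous. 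Since ``$L_{t_0}$ is nice'' depends only on the process up to time $t_0$, it is enough to treat $\rho=-\kappa-6$.

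Next I would invoke Proposition \ref{Prop1} with $(z_0;z_\infty)=(z;w)$: with $W:\D\conf\HH$, $W(z_0)=0$, $W(z_\infty)=\infty$, and the strictly increasing onto reparametrization $v$, the hulls $\ha L_s:=W^*(L_{v(s)})$, $s\ge0$, are driven by a backward chordal SLE$_\kappa$ process. As $z_\infty$ is the joint point, $W^{-1}(\infty)=z_\infty\notin S_{L_{t_0}}$, so $W^{L_{t_0}}$ and $W^*(L_{t_0})=W^{L_{t_0}}(L_{t_0})$ are defined with $W^{L_{t_0}}:\D\conf\HH$; applying the transfer property recorded right after the definition of ``nice'' to the M\"obius map $(W^{L_{t_0}})^{-1}:\HH\conf\D$, and using $0\notin L_{t_0}$, I obtain that $L_{t_0}$ is nice as soon as $W^*(L_{t_0})$ is a nice $\HH$-hull. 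Writing $t_0=v(s_0)$ we have $W^*(L_{t_0})=\ha L_{s_0}$, and since $v$ is a bijection of $[0,\infty)$ it suffices to show that a.s.\ $\ha L_s$ is a nice $\HH$-hull for every $s>0$.

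For fixed $s$ I would argue on the a.s.\ event that the backward chordal SLE$_\kappa$ process generates a family of backward chordal traces $(\ha\beta_s)$, using three observations: (i) $S_{\ha L_s}$ is an interval on $\R$ (a general property of backward chordal Loewner hulls); (ii) $\HH\sem\ha L_s$ is the unbounded component of $\HH\sem\ha\beta_s([0,s])$, whose boundary is a subcontinuum of the Peano continuum $(\R\cup\{\infty\})\cup\ha\beta_s([0,s])$ in $\ha\C$, hence locally connected, so by Carath\'eodory's theorem $f_{\ha L_s}=g_{\ha L_s}^{-1}$ extends continuously to $\lin\HH$, in particular to $S_{\ha L_s}$; and (iii) $\lin{\ha L_s}\cap\R$ is a (possibly degenerate) interval $[a,b]$, each interior point of which is separated from $\infty$ by $\ha\beta_s([0,s])\cup\R$, hence lies in the closure of a bounded complementary component of $\HH\sem\ha\beta_s([0,s])$ and is not accessible from $\HH\sem\ha L_s$, so $f_{\ha L_s}$ maps the interior of $S_{\ha L_s}$ into $\HH$. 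Together (i)--(iii) give that $\ha L_s$ is nice.

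To promote this to ``for every $s>0$'' (which is what lets us evaluate at the random time $s_0$), I would use Lemma \ref{ft2t1}(iii) together with the time-reversal of Brownian motion and the translation symmetry of the chordal equation to identify, for each fixed $s$, the law of $\ha L_s$ with that of a forward chordal SLE$_\kappa$ hull at capacity time $s$, run the argument of the previous paragraph for such a hull, and then pass to all $s$ using the monotonicity and right-continuity of $s\mapsto\ha L_s$ and the continuity of the backward trace. I expect this last step to be the main obstacle: establishing niceness at \emph{every} capacity time rather than at a single deterministic one amounts to ruling out that the unbounded complementary component ever develops a ``channel'' touching $\R$ at an interior point of $[a,b]$, and this uniform-in-time control is where the geometry of the $\kappa>4$ hull really enters; the absolute-continuity reduction, the M\"obius transfer of niceness, and the continuous extension of $f_{\ha L_s}$ are routine.
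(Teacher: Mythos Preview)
Your reduction by Girsanov to $\rho=-\kappa-6$ and the transfer of niceness through the M\"obius map $W^{L_{t_0}}$ are fine, but the core of the argument has two genuine gaps.

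\textbf{Gap 1: step (iii) is not a valid deduction.} That an interior point $x$ of $[a,b]=\lin{\ha L_s}\cap\R$ lies in the closure of some bounded component of $\HH\sem\ha\beta_s([0,s])$ does \emph{not} prevent $x$ from also lying in $\lin{\HH\sem\ha L_s}$; a boundary point can be shared by several complementary components. What you need is precisely that the outer boundary $\pa(\HH\sem\ha L_s)\cap\lin{\ha L_s}$ meets $\R$ only at $\{a,b\}$, and for $\kappa>4$ this is not a topological triviality about Loewner hulls---it is the content of SLE duality. The paper imports exactly this input by citing Theorem 6.1 in \cite{duality2}, which says that forward chordal SLE$_\kappa$ hulls at stopping times are nice.

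\textbf{Gap 2: the ``every $s$'' upgrade is false.} For $\kappa>4$ there is a.s.\ a nonempty (measure-zero) set of times at which the forward SLE$_\kappa$ tip lies on $\R$; at such times the driving point sits in the interior of $S_{K_t}$ and maps to $\R$, so $K_t$ is \emph{not} nice. Hence one cannot hope to establish niceness simultaneously for all $s>0$ and then specialize to the random $s_0=v^{-1}(t_0)$. Nor does the stopping-time result transfer: $s_0$ is a stopping time for the backward chordal filtration, but after the time-reversal identifying $\ha L_{s_0}$ with a forward hull it is no longer a stopping time for the forward process, so Theorem 6.1 of \cite{duality2} does not apply directly.

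The paper sidesteps both issues by performing the backward/forward reversal in the \emph{radial} picture rather than passing through Proposition \ref{Prop1}. For fixed $t_0$, Lemma \ref{ft2t1}(iii) and rotation symmetry identify the backward radial SLE$_\kappa$ hull $L^0_{t_0}$ in law with a forward radial SLE$_\kappa$ hull at the same \emph{deterministic} time $t_0$; niceness of the latter comes from \cite{duality2} via the chordal/radial equivalence of \cite{LSW2}, on the event $\TT\not\subset\lin{K_{t_0}}$. Only then is Girsanov applied, comparing backward radial SLE$(\kappa;\rho)$ with backward radial SLE$_\kappa$ conditioned on $w_0\notin S_{L^0_{t_0}}$. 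No random time-change enters, and the duality theorem supplies the geometric fact your step (iii) attempted to prove by hand.
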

\begin{proof}
Theorem 6.1 in \cite{duality2} shows that, if $(H_t)$ are $\HH$-hulls generated by a (forward) chordal SLE$_\kappa$ process, then for any stopping time $T\in(0,\infty)$, a.s.\ $H_T$ is a nice $\HH$-hull. From the equivalence between chordal SLE$_\kappa$ and radial SLE$_\kappa$ (Proposition 4.2 in \cite{LSW2}), we conclude that, if $(K_t)$ are $\D$-hulls generated by a forward radial SLE$_\kappa$ process, then for any deterministic point $z_0\in\TT$ and any stopping time $T\in(0,\infty)$ such that $z_0\not\in\lin{K_T}$, a.s.\ $K_T$ is a nice $\D$-hull. This further implies that, for any stopping time $T\in(0,\infty)$, on the event that $\TT\not\subset\lin {K_T}$, a.s.\ $K_T$ is a nice $\D$-hull.
Let $(L^0_t)$ be $\HH$-hulls generated by a backward radial SLE$_\kappa$ process.
The above result in the case that $T$ is a deterministic time together with Lemma \ref{ft2t1} and the rotation symmetry of radial Loewner processes implies that,  for any fixed $t_0\in (0,\infty)$, on the event that $S_{L^0_{t_0}}\ne \TT$, a.s.\ $L^0_{t_0}$ is a nice $\D$-hull.

By rotation symmetry, we may assume that the backward radial SLE$(\kappa;\rho)$ process which generates $(L_t)$ is started from $(1;w_0)$.
Fix $t_0\in(0,\infty)$. Girsanov's theorem implies that the distribution of $(L_t)_{0\le t\le t_0}$ is absolutely continuous w.r.t.\ that of $(L^0_t)_{0\le t\le t_0}$ given by the last paragraph conditioned on the event that $f^0_t(w_0)\in\TT$ for $0\le t\le t_0$. Since $f^0_{t_0}(w_0)\in\TT$ is equivalent to $w_0\in\TT\sem S_{L_{t_0}}$, which implies that $S_{L_{t_0}}\ne \TT$, the proof is completed.
\end{proof}

\begin{Proposition}
  Let $\kappa>0$ and $\rho\le -\frac\kappa 2-2$. Then condition (\ref{beta>0}) almost surely holds for a backward radial SLE$(\kappa;\rho)$ process. \label{condition}
\end{Proposition}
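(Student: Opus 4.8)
The plan is to dispose of the case $\kappa\in(0,4]$ at once and to treat $\kappa>4$ via Lemma~\ref{nice}. When $\kappa\le4$ the backward radial SLE$(\kappa;\rho)$ traces $\beta_t$ are a.s.\ $\D$-simple — this is built into the fact that the process induces a welding of $\TT$, and it also follows from absolute continuity with respect to backward radial SLE$_\kappa$ — so, as noted right after Lemma~\ref{lemma2}, (\ref{beta>0}) holds with $t_1=t_0+1$. Assume from now on $\kappa>4$. Since $\beta_{t_1}([0,t_0])\subseteq\beta_{t_1}([0,n])$ whenever $t_0\le n\le t_1$, it suffices to verify (\ref{beta>0}) for $t_0$ running over $\N$, and hence, by countable additivity, to fix one $t_0$ and show that a.s.\ there is $t_1>t_0$ with $\beta_{t_1}([0,t_0])\subseteq\D$. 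The engine of the proof is the identity $\beta_{t_1}([0,t_0])=f_{t_1,t_0}(\beta_{t_0}([0,t_0]))$ for $t_1>t_0$, which follows from (\ref{backward-trace}) after continuation; I want to pick $t_1$ so large that the $\D$-hull $M:=\D\sem f_{t_1,t_0}(\D)$ is nice and its support $S_M$ already contains $\beta_{t_0}([0,t_0])\cap\TT$ in its interior, for then $f_{t_1,t_0}$ pushes the part of $\beta_{t_0}([0,t_0])$ on $\TT$ into $\D$ (by niceness) while keeping the part inside $\D$ inside $\D$.

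Fix $t_0$ and set $M_s=\D\sem f_{t_0+s,t_0}(\D)$ for $s\ge0$. By Lemma~\ref{ft2t1}(ii), $(f_{t_0+s,t_0})_{s\ge0}$ are the maps of a backward radial SLE$(\kappa;\rho)$ process started from $(e^{i\lambda(t_0)};e^{iq(t_0)})$, where $(\lambda,q)$ solve (\ref{kappa-tau-sigma}); its joint point is $e^{iq(t_0)}=f_{t_0}(w_0)$, $w_0$ being the joint point of the original process. Then: (i) by Lemma~\ref{joint} applied to this shifted process, $\bigcup_{s>0}S_{M_s}=\TT\sem\{f_{t_0}(w_0)\}$, and since the $S_{M_s}$ form an increasing family of closed sub-arcs exhausting this open arc, a short argument upgrades this to $\bigcup_{n\in\N}\mathrm{int}(S_{M_n})=\TT\sem\{f_{t_0}(w_0)\}$; (ii) by Lemma~\ref{nice} applied to the shifted process — this is the only place $\kappa>4$ is used — for each fixed $n$ the hull $M_n$ is a.s.\ nice, hence a.s.\ $M_n$ is nice for all $n\in\N$ simultaneously.

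The crucial step, which I expect to be the main obstacle, is the claim $f_{t_0}(w_0)\notin\beta_{t_0}([0,t_0])$. I would argue as follows: by Lemma~\ref{ft2t1}(iii), $r\mapsto\beta_{t_0}(t_0-r)$, $0\le r\le t_0$, is a forward radial Loewner trace whose generated hull at the final time equals $L_{t_0}$; since a Loewner trace is contained in the closure of the hull it generates, $\beta_{t_0}([0,t_0])\subseteq\overline{L_{t_0}}$. On the other hand Lemma~\ref{joint} gives $w_0\notin S_{L_{t_0}}$, so $f_{t_0}=f_{L_{t_0}}$ is analytic in a $\TT$-neighborhood of $w_0$ and maps it into $\TT\sem\overline{L_{t_0}}$; hence $f_{t_0}(w_0)\notin\overline{L_{t_0}}$, and therefore $f_{t_0}(w_0)\notin\beta_{t_0}([0,t_0])$. (Alternatively, the flow relations of Lemma~\ref{ft2t1} give $f_{t_0,s}(e^{iq(s)})=f_{t_0}(w_0)$ and $e^{iq(s)}\notin S_{L_{t_0,s}}$ for each $s$, where $L_{t_0,s}=\D\sem f_{t_0,s}(\D)$; since $e^{i\lambda(s)}\ne e^{iq(s)}$ because the radial Bessel process of dimension $\delta\ge2$ driving $Z_s=\lambda(s)-q(s)$ never reaches $\{0,2\pi\}$, the standard boundary behavior of $f_{t_0,s}$ yields $\beta_{t_0}(s)=f_{t_0,s}(e^{i\lambda(s)})\ne f_{t_0}(w_0)$ for every $s$.)

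Finally, work on the a.s.\ event from (ii). By the crucial step together with (i), the compact set $\beta_{t_0}([0,t_0])\cap\TT$ lies in $\TT\sem\{f_{t_0}(w_0)\}=\bigcup_n\mathrm{int}(S_{M_n})$, an increasing union of open arcs, so $\beta_{t_0}([0,t_0])\cap\TT\subseteq\mathrm{int}(S_{M_N})$ for some (random) $N$; put $t_1=t_0+N$. Since $M_N$ is nice, $f_{t_1,t_0}=f_{M_N}$ maps $\mathrm{int}(S_{M_N})$ into $\D$, and it maps $\D$ into $f_{t_1,t_0}(\D)=\D\sem M_N\subseteq\D$; hence $f_{t_1,t_0}$ sends all of $\beta_{t_0}([0,t_0])$ into $\D$, i.e.\ $\beta_{t_1}([0,t_0])\subseteq\D$. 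The loose ends still to be checked carefully are the short argument in (i), the legitimacy of choosing $t_1=t_0+N$ adaptively from the countable family guaranteed in (ii), and the verification that Lemmas~\ref{joint} and~\ref{nice} apply to the time-shifted process conditionally on its random starting configuration.
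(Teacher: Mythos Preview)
Your proposal is correct and follows essentially the same route as the paper: dispose of $\kappa\le 4$ by $\D$-simplicity, and for $\kappa>4$ use the Markov property to view $(f_{t_0+s,t_0})_{s\ge 0}$ as a fresh backward radial SLE$(\kappa;\rho)$ started from $(e^{i\lambda(t_0)};f_{t_0}(w_0))$, then combine Lemma~\ref{joint} (supports exhaust $\TT\sem\{f_{t_0}(w_0)\}$), Lemma~\ref{nice} (niceness of the shifted hulls), and the fact $f_{t_0}(w_0)\notin\beta_{t_0}([0,t_0])$ to push $\beta_{t_0}([0,t_0])$ into $\D$ for some large $t_1$. The only cosmetic difference is organizational: the paper phrases the endgame as ``$\PP[\beta_{t_1}([0,t_0])\subset\D]\to 1$ as $t_1\to\infty$'' and then concludes existence of a random $t_1$, whereas you first pass to a countable family $\{M_n\}$ on which niceness holds simultaneously and then select a random $N$ pathwise; your loose ends (the interior-exhaustion in (i), the random choice of $N$, and the conditional application of Lemmas~\ref{joint} and~\ref{nice}) are all routine and handled implicitly in the paper as well.
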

\begin{proof}
The result is clear if $\kappa\le 4$ since the traces are $\D$-simple. Now assume that $\kappa>4$.
  Suppose the process is started from $(z_0;w_0)$. Lemma \ref{joint} implies that $S_{L_{t_0}}\subset \TT\sem\{w_0\}$. So $f_{t_0}(w_0)\not\in \lin{L_{t_0}}$. Since $L_{t_0}$ is the $\D$-hull generated by $\beta_{t_0}$, we have $f_{t_0}(w_0)\not\in \beta_{t_0}([0,t_0])$. The Markov property of Brownian motion and the fact that $e^{i q(t)}=f_t(w_0)$ for all $t$ imply that, conditioned on $\lambda(t)$, $0\le t\le t_0$, the maps $f_{t_0+t,t_0}$, $t\ge 0$, are generated by a backward radial SLE$(\kappa;\rho)$ process started from $(e^{i\lambda(t_0)};f_{t_0}(w_0))$. Let $L_{t_0+t,t_0}=\D\sem f_{L_{t_0+t,t_0}}(\D)$. Lemma \ref{nice} implies that, for every $t_1>t_0$, a.s.\ $L_{t_1,t_0}$ is nice. Lemma \ref{joint} implies that the probability that $\beta_{t_0}([0,t_0])\cap\TT$ is contained in the interior of $S_{L_{t_1,t_0}}$ tends to $1$ as $t_1\to\infty$.

  If $L_{t_1,t_0}$ is nice and $\beta_{t_0}([0,t_0])\cap\TT$ is contained in the interior of $S_{L_{t_1,t_0}}$, then
  $$\beta_{t_1}([0,t_0])=f_{t_1,t_0}(\beta_{t_0}([0,t_0]))=f_{L_{t_1,t_0}}(\beta_{t_0}([0,t_0]))\subset\D.$$
  In fact, if $z\in \beta_{t_1}([0,t_0])\cap\D$, then obviously $f_{L_{t_1,t_0}}(z)\in\D$; if $z\in \beta_{t_0}([0,t_0])\cap\TT$, then $f_{L_{t_1,t_0}}(z)\in\D$ follows from that $L_{t_1,t_0}$ is nice and $z$ lies in the interior of $S_{L_{t_1,t_0}}$. Thus, as $t_1\to\infty$, the probability that $\beta_{t_1}([0,t_0])\subset\D$ tends to $1$. This means that, for every fixed $t_0>0$, a.s.\ there exists a (random) $t_1>t_0$ such that $\beta_{t_1}([0,t_0])\subset\D$. Thus, on an event with probability $1$, (\ref{beta>0}) holds for every $t_0\in\N$. Since $\beta_{t_1}([0,t_0])\subset \beta_{t_1}([0,n])\subset\D$ if $t_0<n\in\N$, we see that (\ref{beta>0}) holds on that event. This completes the proof.
\end{proof}

Thus, a normalized backward radial SLE$(\kappa;\rho)$ trace can be well defined for any $\kappa>0$ and $\rho\le -\frac\kappa 2-2$. Combining Lemmas \ref{reversal-driving'} and \ref{lemma2}, we obtain the following theorem.

\begin{Theorem}
  Let $\kappa>0$ and $\rho\le -\frac\kappa 2-2$. Let $\beta(t)$, $0\le t<\infty$, be a normalized stationary backward radial SLE$(\kappa;\rho)$ trace. Then $\gamma(s):=I_\TT(\beta(-s))$, $-\infty<s\le 0$, is a whole-plane SLE$(\kappa;-4-\rho)$ trace stopped at time $0$. \label{cor-whole}
\end{Theorem}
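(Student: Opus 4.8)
The plan is to read off the statement by combining three facts that have already been established: the pathwise construction of the normalized backward radial trace in Lemma \ref{lemma2}, the almost sure validity of condition (\ref{beta>0}) from Proposition \ref{condition}, and the time-reversal identity for driving functions in Lemma \ref{reversal-driving'}. No new estimates should be needed; the argument is essentially a matter of lining up the three inputs and a routine measurability remark.

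First I would fix a driving function $\lambda\in C([0,\infty))$ of the stationary backward radial SLE$(\kappa;\rho)$ process. By definition this is a backward radial SLE$(\kappa;\rho)$ process started from a random pair $(e^{i\bar x},e^{i\bar y})$ with $(\bar x,\bar x-\bar y)\sim U_{[0,2\pi)}\times\mu_\delta$, so conditionally on $(\bar x,\bar y)$ it is an ordinary backward radial SLE$(\kappa;\rho)$ process started from a deterministic pair. Proposition \ref{condition} applies to each of these conditional laws, hence, after integrating over $(\bar x,\bar y)$, condition (\ref{beta>0}) holds almost surely for the stationary process as well. Therefore Lemma \ref{lemma2} applies: the normalized backward radial trace $\beta(t)$, $0\le t<\infty$, exists (with $\beta(t)\to\infty$), every $F_{\infty,t}$ extends continuously to $\lin\D$, and $\gamma(s):=I_\TT(\beta(-s))$, $-\infty<s\le 0$, is exactly the whole-plane Loewner trace driven by the function $s\mapsto\lambda(-s)$, $-\infty<s\le 0$. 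In particular this whole-plane Loewner trace exists, which is part of the conclusion of Lemma \ref{lemma2}.

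Next I would invoke Lemma \ref{reversal-driving'}, which asserts that $s\mapsto\lambda(-s)$, $-\infty<s\le 0$, has the law of a driving function up to time $0$ of a whole-plane SLE$(\kappa;-4-\rho)$ process. Note that $\rho\le-\frac\kappa2-2$ forces $-4-\rho\ge\frac\kappa2-2$, so the corresponding radial Bessel dimension is $\ge 2$, the whole-plane SLE$(\kappa;-4-\rho)$ process is defined on all of $\R$, and (by Girsanov together with the Markov-type relation to a stationary forward radial SLE$(\kappa;-4-\rho)$ process, whose trace exists) it almost surely generates a whole-plane trace. Since the whole-plane Loewner trace, when it exists, is a measurable function of the driving function, comparing the laws of the driving functions shows that $\gamma(s)=I_\TT(\beta(-s))$, $-\infty<s\le 0$, has the law of a whole-plane SLE$(\kappa;-4-\rho)$ trace stopped at time $0$, which is the assertion of the theorem.

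The step requiring the most care is the bookkeeping of time directions and parametrizations: one must check that the stationary backward radial trace on $[0,\infty)$, after the reflection $I_\TT$ and the reversal $t\mapsto -t$, really is parametrized by whole-plane capacity in the sense used in Lemmas \ref{lemma1} and \ref{lemma2}, and that ``stopped at time $0$'' on both sides refers to the same cutoff. The accompanying point is the (routine but necessary) measurability argument that equality in law of driving functions transfers to equality in law of the generated traces, for which one uses that all the relevant driving functions almost surely do generate traces. All the genuinely analytic content — local connectedness of $\pa K_s$, niceness of backward radial hulls, and reversibility of the stationary radial Bessel process — has already been isolated in Lemmas \ref{lemma2}, \ref{nice} and \ref{reversal-driving'}, so this theorem does not require revisiting it.
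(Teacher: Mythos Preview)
Your proposal is correct and follows exactly the paper's approach: the paper's proof is the one-line remark that the theorem follows by combining Lemmas \ref{reversal-driving'} and \ref{lemma2} (with Proposition \ref{condition} having been invoked just before the statement to guarantee (\ref{beta>0}) holds a.s.). Your write-up simply unpacks these three inputs and adds the routine measurability observation, which is precisely what is needed.
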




\section{Conformal Images of the Tips} \label{image}
\begin{Theorem}
  Let $\kappa\in(0,4)$. Let $\gamma(s)$, $-\infty\le s\le 0$, be a whole-plane SLE$(\kappa;\kappa+2)$ trace stopped at time $0$. Then after an orientation reversing time-change, the curve $\gamma(s)-\gamma(0)$, $-\infty\le s\le 0$, has the same distribution as $\gamma(s)$, $-\infty\le s\le 0$. \label{reversibility-whole}
\end{Theorem}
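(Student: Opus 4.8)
The plan is to deduce the reversibility of the whole-plane SLE$(\kappa;\kappa+2)$ trace stopped at time $0$ from the symmetry of the backward radial SLE$(\kappa;-\kappa-6)$ welding (Proposition \ref{Prop2}), using the identification of the normalized backward radial trace with the whole-plane trace (Theorem \ref{cor-whole}) together with the conformal removability of SLE$_\kappa$ curves for $\kappa\in(0,4)$. Note that $\rho=\kappa+2$ corresponds via $\rho\mapsto -4-\rho$ to the value $-\kappa-6$, so Theorem \ref{cor-whole} says that the curve $\gamma(s)=I_\TT(\beta(-s))$, where $\beta$ is a normalized stationary backward radial SLE$(\kappa;-\kappa-6)$ trace, is a whole-plane SLE$(\kappa;\kappa+2)$ trace stopped at time $0$. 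Since reversing a M\"obius transformation and a reflection about $\TT$ are conformal automorphisms of $\ha\C$, and the statement of the theorem is invariant under such maps (the target curve $\gamma(s)-\gamma(0)$ differs from $\gamma(s)$ only by a translation, and reversibility is a statement about the curve modulo conformal automorphisms of the complement), it suffices to prove the analogous statement for $\beta$ itself: after an orientation reversing time-change, the curve $\beta$, read from its other end, is conformally equivalent (via a M\"obius transformation of $\D$) to $\beta$.

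The key steps, in order, would be as follows. First, take $z_1\ne z_2\in\TT$ and use Proposition \ref{Prop2} to couple two backward radial SLE$(\kappa;-\kappa-6)$ processes, one started from $(z_1;z_2)$ and one from $(z_2;z_1)$, inducing the same welding $\phi$ of $\TT$. By the stationarity discussion in Section \ref{Section-kappa-rho'} and the remark following Corollary \ref{reversal-trace}, one can arrange these to be (the relevant pieces of) stationary processes, so that Theorem \ref{cor-whole} applies to each. Let $\beta^{(1)}$ and $\beta^{(2)}$ be the corresponding normalized backward radial traces, with conformal maps $F^{(i)}_{\infty,0}:\D\conf\C\sem\beta^{(i)}$. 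As recorded in the paragraph after the Definition of the normalized trace, $F^{(i)}_{\infty,0}$ maps the two arcs of $\TT$ (cut at the two fixed points $z_i$ and $z_{3-i}$ of $\phi$) onto the two sides of $\beta^{(i)}$, sending $z_i\mapsto\beta^{(i)}(0)\in\C$ and the joint point $z_{3-i}\mapsto\infty$, and $y=\phi(x)$ implies $F^{(i)}_{\infty,0}(x)=F^{(i)}_{\infty,0}(y)$. Thus $h:=F^{(2)}_{\infty,0}\circ (F^{(1)}_{\infty,0})^{-1}$, a priori defined as a homeomorphism $\C\sem\beta^{(1)}\to\C\sem\beta^{(2)}$, matches the boundary identifications: it is conformal off $\beta^{(1)}$ and extends continuously across $\beta^{(1)}$ because the two weldings agree. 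Second, invoke conformal removability of $\beta^{(1)}$ (which holds because, as a conformal image of an SLE$_\kappa$ curve with $\kappa\in(0,4)$, it is removable; this is the key input flagged in the abstract) to conclude that $h$ extends to a conformal automorphism of $\ha\C$, i.e., a M\"obius transformation. Third, track what $h$ does to the two ends: it sends $\infty$ (the joint-point end of $\beta^{(1)}$) to the $z_2$-end of $\beta^{(2)}$, i.e., to a point of $\C$, and sends $\beta^{(1)}(0)$ (a point of $\C$) to $\infty$; so $h$ is a genuine M\"obius transformation swapping the roles of the two ends, and it conjugates $\beta^{(1)}$, traversed tip-to-root, to $\beta^{(2)}$, traversed root-to-tip. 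Finally, translate this back through $I_\TT$ and Theorem \ref{cor-whole}: since $\gamma^{(i)}(s)=I_\TT(\beta^{(i)}(-s))$ is a whole-plane SLE$(\kappa;\kappa+2)$ trace stopped at time $0$, and the two processes have the same law (both being stationary backward radial SLE$(\kappa;-\kappa-6)$, just started from different but equivalently-distributed pairs), the identity $h\circ\beta^{(1)}=\beta^{(2)}\circ(\text{reversal})$ shows that the orientation-reversed, conformally-mapped curve has the same law as the original; the conformal map, composed with $I_\TT$ on both sides, becomes a M\"obius transformation of $\ha\C$ fixing $0$, which after composition with a further translation yields exactly the statement $\gamma(s)-\gamma(0)\stackrel{d}{=}\gamma(s)$ after an orientation reversing time change.

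The main obstacle I expect is the careful bookkeeping around the two ends and the normalization. Concretely: (i) verifying that the continuous extension of $h$ across $\beta^{(1)}$ is well-defined and continuous requires matching not just the welding on $\TT$ but also checking the behavior at the tip (the unique fixed point of $\phi$ that maps into $\C$) and at the joint point (the fixed point mapping to $\infty$); one must confirm $F^{(1)}_{\infty,0}$ and $F^{(2)}_{\infty,0}$ have compatible boundary behavior there, including that the joint point really maps to $\infty$ for both and that no extra ``prime end'' issues arise. (ii) The time-change: Theorem \ref{cor-whole} gives whole-plane SLE parametrized by whole-plane capacity, whereas the backward radial trace $\beta$ is parametrized by disc capacity of the backward hulls; one must argue that the M\"obius conjugation intertwines these parametrizations up to a monotone (orientation-reversing, after the $s\mapsto -s$) time-change, which is where the ``orientation reversing time-change'' in the statement comes from — this is essentially automatic since both curves are simple and the statement only claims equality of laws as parametrized curves modulo reparametrization, but it needs to be said. (iii) Ensuring the two coupled processes are genuinely stationary (so Theorem \ref{cor-whole} applies verbatim) rather than started from fixed points: this should follow by averaging/uniformizing the starting configuration, but the coupling of Proposition \ref{Prop2} is stated for fixed $z_1,z_2$, so one integrates over the stationary law of the starting pair using rotation symmetry. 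Apart from these points the argument is a fairly standard removability-plus-welding-symmetry argument in the spirit of reversibility proofs for forward SLE.
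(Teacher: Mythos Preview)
Your approach is essentially the paper's: couple two backward radial SLE$(\kappa;-\kappa-6)$ processes via Proposition \ref{Prop2}, use the normalized-trace maps $F^{(i)}_{\infty,0}$ to build a conformal map between the complements of the two traces, extend it across the curve by conformal removability, and read off the resulting M\"obius transformation. The only real difference is that the paper performs the $I_\TT$-conjugation at the outset rather than at the end: it sets $W=I_\TT\circ \til F_{\infty,0}\circ F_{\infty,0}^{-1}\circ I_\TT$, so that $W:\ha\C\sem\gamma\conf\ha\C\sem\til\gamma$ automatically satisfies $W(\infty)=\infty$ and $W'(\infty)=1$ (inherited from $F^{(i)}_{\infty,0}(0)=0$, $(F^{(i)}_{\infty,0})'(0)=1$). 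Removability then forces $W(z)=z+C$ on the nose, and tracking the endpoints gives $C=-\gamma(0)$.

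Your final bookkeeping is slightly muddled and would be cleaned up by exactly this normalization. Your map $h=F^{(2)}_{\infty,0}\circ(F^{(1)}_{\infty,0})^{-1}$ fixes $0$ with $h'(0)=1$; hence $I_\TT\circ h\circ I_\TT$ fixes $\infty$ (not $0$, as you wrote) with derivative $1$ there, so it \emph{is} the translation $z\mapsto z-\gamma^{(1)}(0)$ --- no ``further translation'' is required. (Also, ``M\"obius transformation of $\D$'' in your reduction should read $\ha\C$; the normalized traces live in $\C$, not $\D$.) Your listed obstacles (i)--(iii) are legitimate but routine, and the paper treats them as you anticipate; in particular the stationarity point is handled by conditioning on the starting pair and using rotation covariance of the coupling in Proposition \ref{Prop2}.
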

\begin{proof}
  Theorem \ref{cor-whole} shows that $\beta(t):=I_\TT(\gamma(-t))$, $0\le t\le \infty$, is a normalized stationary backward radial SLE$(\kappa;-\kappa-6)$ trace, which is a simple curve with $ \beta(\infty)=\infty$, and there is $F_{\infty,0}:\D\conf \C\sem\beta$ such that $F_{\infty,0}(0)=0$, $F_{\infty,0}'(0)=1$, and $F_{\infty,0}(x)=F_{\infty,0}(y)$ implies that $y=x$ or $y=\phi(x)$, where $\phi$ is the welding welding induced by the stationary backward radial SLE$(\kappa;-\kappa-6)$ process. Proposition \ref{Prop2} implies that this process can be with another stationary backward radial SLE$(\kappa;-\kappa-6)$ process, which induces the same lamination, but has a different joint point. Let $\til\beta$ and $\til F_{\infty,0}$ be the normalized trace and map for the second process. Let $\til\gamma(s)=I_\TT(\til\beta(-s))$, $-\infty\le s\le 0$. Theorem \ref{cor-whole} implies that $\til\gamma$ is also a whole-plane SLE$(\kappa;\kappa+2)$ trace stopped at time $0$.

  Define $W=I_\TT\circ \til F_{\infty,0}\circ F_{\infty,0}^{-1}\circ I_\TT$. Then $W:\ha\C\sem \gamma\conf \ha\C\sem \til\gamma$ and satisfies that $W(\infty)=\infty$ and $W'(\infty)=1$. Since the two backward radial SLE$(\kappa;-\kappa-6)$ processes induce the same welding, we see that $F_{\infty,0}(x)=F_{\infty,0}(y)$ iff $\til F_{\infty,0}(x)=\til F_{\infty,0}(y)$. Thus, $W$ extends continuously to $\gamma$. The work in \cite{JS} shows that a H\"older curve is conformally removable; while the work in \cite{RS-basic} shows that, for $\kappa\in(0,4)$, a chordal SLE$_\kappa$ trace is a H\"older curve, which together with the Girsanov's theorem and the equivalence between chordal SLE$_\kappa$ and radial SLE$_\kappa$ implies that a radial SLE$(\kappa;\rho)$ trace is conformally removable for $\kappa\in(0,4)$ and $\rho\ge \frac\kappa 2-2$ (which is true if $\rho=\kappa+2$). The Markov-type relation between whole-plane SLE$(\kappa;\kappa+2)$ and radial SLE$(\kappa;\kappa+2)$ processes implies that $\gamma([t_0,0])$ is conformally removable for any $t_0\in(-\infty,0)$, and so is the whole curve $\gamma=\gamma([-\infty,0])$. Thus, $W$ extends to a conformal map defined on $\ha\C$ such that $W(\gamma)=\til\gamma$. Since $W(\infty)=\infty$ and $W'(\infty)=1$, we have $W(z)=z+C$ for some constant $C\in\C$. This means that $\til\gamma=\gamma+C$, where both curves are viewed as sets. Since both curves are simple, $W$ maps end points of $\gamma$ to end points of $\til\gamma$. Now $0$ is an end point of both curves. Since $F_{\infty,0}$ and $\til F_{\infty,0}$ map the joint points of the two processes, respectively, to $\infty$, and the two joints points are different, $W$ does not fixed $0$. So $W$ maps the other end point of $\gamma$: $\gamma(0)$ to $0$, which implies that $C=-\gamma(0)$ and the orientations of $\til\gamma$ and $W(\gamma)=\gamma-\gamma(0)$ are opposite to each other. Thus, the whole-plane SLE$(\kappa;\kappa+2)$ trace $\til\gamma$ up to time $0$ is an orientation reversing time-change of $\gamma-\gamma(0)$ up to time $0$, which completes the proof.
  \end{proof}

\no{\bf Remark.}
This theorem says that a whole-plane SLE$(\kappa;\kappa+2)$ ($\kappa\in(0,4)$) trace stopped at whole-plane capacity time $0$ satisfies reversibility. So a tip segment of the trace at time $0$ has the same shape as an initial segment of the trace.

\begin{Lemma}
  Let $\kappa>0$. Let $\beta$ be a forward chordal SLE$_\kappa$ trace. Let $t_0\in(0,\infty)$ be fixed. Then there is a whole-plane SLE$(\kappa;\kappa+2)$ process, which generates hulls $(K_s)$ and a trace $\gamma$, and a random conformal map $W$ defined on $\HH$ such that $W(\HH)=\ha\C\sem K_{s_0}$ for some random $s_0<0$ and $W(\beta(t))= \gamma(v(t))$, $0\le t\le t_0$, where $v$ is a random strictly increasing function with $v([0,t_0])=[s_0,0]$. \label{W-trace}
\end{Lemma}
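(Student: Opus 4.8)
The plan is to realize a forward chordal SLE$_\kappa$ trace up to time $t_0$ as the time-reversal of a backward chordal SLE$_\kappa$ trace, and then transport this identification along the chain of conformal relations already established: a backward chordal SLE$_\kappa$ trace is, up to a time-change, a M\"obius image of a backward radial SLE$(\kappa;-\kappa-6)$ trace (Corollary \ref{Prop1-cor}); a backward radial trace is the $F_{\infty,t_0}^{-1}$-image of the normalized backward radial trace (Lemma \ref{lemma2}); and, for the \emph{stationary} process, the normalized trace reflected through $\TT$ is a whole-plane SLE$(\kappa;\kappa+2)$ trace stopped at time $0$ (Theorem \ref{cor-whole}). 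Composing the conformal maps produced at these three stages gives $W$, and concatenating the three time-changes gives $v$. All the cited results hold for every $\kappa>0$ with $\rho=-\kappa-6$, so no restriction on $\kappa$ is needed.

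In detail, I would start from a \emph{stationary} backward radial SLE$(\kappa;-\kappa-6)$ process, with backward radial traces $(\beta^r_t)_{t\ge 0}$, normalized trace $\beta^{nr}$ (which exists by Proposition \ref{condition} and Lemma \ref{lemma2}), conformal maps $F_{\infty,t}$, and hulls $K_s=\C\sem I_\TT(F_{\infty,-s}(\D))$ as in Lemma \ref{lemma1}. By Theorem \ref{cor-whole}, $\gamma(s):=I_\TT(\beta^{nr}(-s))$, $-\infty<s\le 0$, is a whole-plane SLE$(\kappa;\kappa+2)$ trace stopped at time $0$ with Loewner hulls $(K_s)$. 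Conditioning on the random starting pair of the stationary process, Corollary \ref{Prop1-cor} applies and produces a strictly increasing $\vartheta$ with $\vartheta([0,\infty))=[0,\infty)$ and M\"obius maps $W_t\colon\D\conf\HH$ such that $\ha\beta_t:=W_{\vartheta(t)}\circ\beta^r_{\vartheta(t)}\circ\vartheta$, $t\ge 0$, are backward chordal traces generated by a backward chordal SLE$_\kappa$ process; since the law of that process is independent of the conditioning, the coupling is legitimate unconditionally. Fixing $t_0$, set $r_0:=\vartheta(t_0)$, $\ha\beta:=\ha\beta_{t_0}$, and let $\ha\lambda$ be the corresponding backward chordal driving function, so $\ha\lambda(t_0)=\ha\beta(t_0)\in\R$. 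By Lemma \ref{ft2t1}(iii), the Markov property of Brownian motion, and translation symmetry, $\beta(t):=\ha\beta(t_0-t)-\ha\lambda(t_0)$, $0\le t\le t_0$, is a forward chordal SLE$_\kappa$ trace up to time $t_0$; it suffices to construct the asserted coupling for this realization of $\beta$.

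Now I would compose. For $0\le s\le t_0$ one has $\ha\beta(s)=W_{r_0}(\beta^r_{r_0}(\vartheta(s)))$, while Lemma \ref{lemma2} gives $\beta^{nr}(u)=F_{\infty,r_0}(\beta^r_{r_0}(u))$ and, by definition, $\beta^{nr}(u)=I_\TT(\gamma(-u))$, for $0\le u\le r_0$. Let $\Phi$ be the conformal map $w\mapsto F_{\infty,r_0}(W_{r_0}^{-1}(w+\ha\lambda(t_0)))$ of $\HH$ onto $F_{\infty,r_0}(\D)$, which extends continuously to $\lin\HH$ since $F_{\infty,r_0}$ extends continuously to $\lin\D$ by Lemma \ref{lemma2}. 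Chasing the identities through, for $0\le t\le t_0$ one gets $\Phi(\beta(t))=F_{\infty,r_0}(W_{r_0}^{-1}(\ha\beta(t_0-t)))=F_{\infty,r_0}(\beta^r_{r_0}(\vartheta(t_0-t)))=\beta^{nr}(\vartheta(t_0-t))=I_\TT(\gamma(-\vartheta(t_0-t)))$. Thus, setting $v(t):=-\vartheta(t_0-t)$ -- strictly increasing with $v([0,t_0])=[-r_0,0]$ -- and $s_0:=-r_0<0$, the map $W:=I_\TT\circ\Phi$ carries $\HH$ bijectively onto $I_\TT(F_{\infty,r_0}(\D))=\ha\C\sem K_{s_0}$, extends continuously to $\lin\HH$, and satisfies $W(\beta(t))=\gamma(v(t))$. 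Since $I_\TT(z)=1/\lin z$ reverses orientation, $W$ is anti-conformal as it stands; but the complex conjugate of a whole-plane SLE$(\kappa;\kappa+2)$ trace is again one (reflection symmetry, from the symmetry of Brownian motion and the oddness of $\cot_2$), so replacing $\gamma$, $(K_s)$ and $W$ by their complex conjugates yields a genuine conformal map $W$ with all the stated properties.

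The work here is not in any single estimate but in the bookkeeping: one must check that the composition of the three time-changes is exactly $v$ and, more delicately, that the domains and images of $W_{r_0}$, $F_{\infty,r_0}$ and $I_\TT$ fit together so that $W$ is defined on \emph{all} of $\HH$ and maps it \emph{onto} $\ha\C\sem K_{s_0}$ -- here one uses that $W_{r_0}$ is a M\"obius bijection $\D\conf\HH$ and that $I_\TT(F_{\infty,r_0}(\D))$ is exactly the complement of the whole-plane hull $K_{s_0}$ of Lemma \ref{lemma1}. The one genuinely subtle step is the use of Corollary \ref{Prop1-cor}, which is stated for a backward radial SLE$(\kappa;-\kappa-6)$ started from a fixed pair, for the stationary process: one invokes it conditionally on the random starting pair and notes that the resulting backward chordal process has the law of an unconditional backward chordal SLE$_\kappa$, so that Corollary \ref{Prop1-cor}, Theorem \ref{cor-whole}, and the backward-chordal time-reversal can all be used inside one coupling. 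The orientation mismatch introduced by $I_\TT$ is harmless and is absorbed, as above, by the reflection symmetry of whole-plane SLE$(\kappa;\kappa+2)$.
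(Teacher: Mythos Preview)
Your argument is correct and follows essentially the same route as the paper's own proof: both use Lemma \ref{ft2t1}(iii) with translation symmetry to link forward and backward chordal SLE$_\kappa$, Corollary \ref{Prop1-cor} to pass between backward chordal SLE$_\kappa$ and backward radial SLE$(\kappa;-\kappa-6)$, Lemmas \ref{lemma1}--\ref{lemma2} and Proposition \ref{condition} to produce the normalized trace and the maps $F_{\infty,t}$, and Theorem \ref{cor-whole} to identify the reflection with a whole-plane SLE$(\kappa;\kappa+2)$ trace. The only organizational difference is that you begin on the radial side and construct the chordal trace from it, whereas the paper begins with the given chordal trace and couples a stationary radial process to it; the resulting formula $W=I_\TT\circ F_{\infty,r_0}\circ W_{r_0}^{-1}\circ(\,\cdot\,+\ha\lambda(t_0))$ and time-change $v(t)=-\vartheta(t_0-t)$ match the paper's $W(z)=I_\TT(F_{\infty,u(t_0)}(V_{t_0}(z-\lambda(t_0))))$ and $v(t)=-u(t_0-t)$ up to relabeling.

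One point worth noting: you correctly observe that $I_\TT(z)=1/\bar z$ is anti-conformal, so the composite $W$ as written reverses orientation, and you fix this via the reflection symmetry of whole-plane SLE$(\kappa;\kappa+2)$. The paper's own formula has the same feature but does not comment on it; your treatment here is in fact more careful.
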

\begin{proof}
Let $\lambda$ be the driving function for $\beta$. Lemma \ref{ft2t1} and the translation symmetry implies that there is a backward chordal SLE$_\kappa$ process, which generates backward chordal traces $(\til\beta_t)$ such that $\til\beta_{t_0}(t_0-t)=\beta(t)-\lambda(t_0)$, $0\le t\le t_0$. Corollary \ref{Prop1-cor} implies that there exist a stationary backward radial SLE$(\kappa;-\kappa-6)$ process generating backward radial traces $(\ha\beta_t)$, a family of  M\"obius transformations $(V_t)$ with $V_t(\HH)=\D$ for each $t$,
 and a strictly increasing function $u$ with $u([0,\infty))=[0,\infty)$, such that $V_{t_1}(\til \beta_{t_1}(t))=\ha\beta_{u(t_1)}(u(t))$ for any  $t_1\ge t\ge 0$. In particular, it follows that $V_{t_0}(\beta(t)-\lambda(t_0))=\ha\beta_{u(t_0)}(u(t_0-t))$, $0\le t\le t_0$.

Let $\ha\beta$ be the normalized backward radial trace generated by that stationary backward radial SLE$(\kappa;-\kappa-6)$ process, which exists thanks to Proposition \ref{condition}. Lemmas \ref{lemma1} and \ref{lemma2} state that there exists a family of conformal maps $F_{\infty,t}$, $t\ge 0$, defined on $\D$, with continuation to $\lin\D$, such that $\ha\beta(t)=F_{\infty,t_1}(\beta_{t_1}(t))$ for any $t_1\ge t\ge 0$. In particular, we have
$$F_{\infty,u(t_0)}(V_{t_0}(\beta(t)-\lambda(t_0)))=F_{\infty,u(t_0)}(\ha\beta_{u(t_0)}(u(t_0-t)))=\ha\beta(u(t_0-t)),\quad 0\le t\le t_0.$$
Theorem \ref{cor-whole} states that $\gamma(s):=I_\TT(\ha\beta(-s))$, $-\infty<s\le 0$, is a whole-plane SLE$(\kappa;\kappa+2)$ trace stopped at time $0$. Lemma \ref{lemma1} states that $K_s:=\C\sem I_\TT\circ F_{\infty,-s}(\D)$ are the corresponding hulls.
Then we have $I_\TT(F_{\infty,u(t_0)}(V_{t_0}(\beta(t)-\lambda(t_0))))=\gamma(-u(t_0-t))$ , $0\le t\le t_0$.
Now it is easy to check that $W(z):=I_\TT(F_{\infty,u(t_0)}(V_{t_0}(z-\lambda(t_0))))$, $v(t):=-u(t_0-t)$, and $s_0:=-u(t_0)$ satisfy the desired properties.
\end{proof}

\begin{Theorem}
    Let $\kappa\in(0,4)$ and $t_0\in(0,\infty)$. Let $\beta(t)$, $t\ge 0$, be a forward chordal SLE$_\kappa$ trace (parameterized by the half-plane capacity). Then there is a random conformal map $V$ defined on $\HH$ such that $V(\beta(t_0))=0$, and $V(\beta(t_0-t))$, $0\le t\le t_0$, is an initial segment of a whole-plane SLE$(\kappa;\kappa+2)$ trace, up to a time-change.\label{Thm2}
\end{Theorem}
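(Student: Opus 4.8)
The plan is to read Theorem \ref{Thm2} off from Lemma \ref{W-trace} together with the reversibility of whole-plane SLE$(\kappa;\kappa+2)$ in Theorem \ref{reversibility-whole}; essentially no new analysis is needed, only a careful composition.

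First I would apply Lemma \ref{W-trace} to the given forward chordal SLE$_\kappa$ trace $\beta$ and the fixed time $t_0$. This yields a whole-plane SLE$(\kappa;\kappa+2)$ process generating a trace $\gamma$ (stopped at capacity time $0$) and hulls $(K_s)$, a random $s_0<0$, a random strictly increasing function $v$ with $v([0,t_0])=[s_0,0]$, and a random conformal map $W$ on $\HH$ with $W(\HH)=\ha\C\sem K_{s_0}$, such that $W(\beta(t))=\gamma(v(t))$ for $0\le t\le t_0$. In particular $W(\beta(0))=\gamma(s_0)$ and $W(\beta(t_0))=\gamma(0)$, so $W$ maps $\beta([0,t_0])$ onto the tip segment $\gamma([s_0,0])$ of the whole-plane trace at capacity time $0$.

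Next I would set $V(z):=W(z)-\gamma(0)$, a random conformal map on $\HH$; then $V(\beta(t_0))=\gamma(0)-\gamma(0)=0$, which is the first required property. For $0\le t\le t_0$ we have $V(\beta(t_0-t))=\gamma(v(t_0-t))-\gamma(0)$, and as $t$ runs from $0$ to $t_0$ the argument $v(t_0-t)$ decreases from $0$ to $s_0$; hence $t\mapsto V(\beta(t_0-t))$ traces the set $\gamma([s_0,0])-\gamma(0)$ starting from the tip $\gamma(0)-\gamma(0)=0$ and moving backward along $\gamma$. Now I invoke Theorem \ref{reversibility-whole}: after the orientation-reversing re-parametrization (by whole-plane capacity), $\gamma(\cdot)-\gamma(0)$ is a whole-plane SLE$(\kappa;\kappa+2)$ trace $\ha\gamma$ stopped at time $0$, with $\ha\gamma(-\infty)=0$ corresponding to the tip $\gamma(0)$ and $\ha\gamma(0)=-\gamma(0)$ to the root $\gamma(-\infty)=0$. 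Under this reversal the tip segment $\gamma([s_0,0])-\gamma(0)$ — traversed from $0$ backward along $\gamma$, hence forward along $\ha\gamma$ — is precisely an initial segment $\ha\gamma([-\infty,s_1])$ with $s_1=\ccap(\gamma([s_0,0])-\gamma(0))\le\ccap(\gamma([-\infty,0]))=0$. Composing the two monotone reparametrizations ($t\mapsto v(t_0-t)$, decreasing, and the orientation-reversing capacity change) produces a continuous strictly increasing time change $\tau:[0,t_0]\to[-\infty,s_1]$ with $V(\beta(t_0-t))=\ha\gamma(\tau(t))$. Thus $V(\beta(t_0-t))$, $0\le t\le t_0$, is an initial segment of a whole-plane SLE$(\kappa;\kappa+2)$ trace stopped at time $0$, up to a time change, which is the assertion.

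The main point to get right is the bookkeeping of orientation and of the nested monotone time changes, together with the elementary fact that for $\kappa\in(0,4)$ all the traces in play are simple curves, so that reparametrization by capacity is continuous and strictly increasing along the curve and the notions of tip segment and initial segment are unambiguous. Everything substantive — the existence of $W$ and the reversibility of whole-plane SLE$(\kappa;\kappa+2)$ — is already supplied by Lemma \ref{W-trace} and Theorem \ref{reversibility-whole}.
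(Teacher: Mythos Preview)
Your proposal is correct and follows exactly the same approach as the paper's own proof, which simply says to apply Lemma \ref{W-trace} to land on a tip segment of a whole-plane SLE$(\kappa;\kappa+2)$ trace at time $0$ and then invoke Theorem \ref{reversibility-whole}. You have merely written out explicitly the translation $V=W-\gamma(0)$ and the composition of the two monotone time changes, which the paper leaves implicit.
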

\begin{proof}
Lemma \ref{W-trace} states that we can map $\beta(t_0-t)$, $0\le t\le t_0$, conformally to a tip segment of a whole-plane SLE$(\kappa;\kappa+2)$ trace at time $0$. Then we may apply Theorem \ref{reversibility-whole}.
\end{proof}

We may derive a similar but weaker result for radial SLE.

\begin{Theorem}
  Let $\kappa\in(0,4)$ and $t_0\in(0,\infty)$. Let $\beta(t)$, $t\ge 0$, be a forward radial SLE$_\kappa$ trace (parameterized by the disc capacity). Then there is a random conformal map $V$ defined on $\D$ such that $V(\beta(1))=0$, and up to a time-change, $V(\beta(t_0-t))$, $0\le t\le t_0$, has a distribution, which is absolutely continuous w.r.t.\ an initial segment of a whole-plane SLE$(\kappa;\kappa+2)$ trace.\label{Thm-R}
\end{Theorem}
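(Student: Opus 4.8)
The plan is to imitate the proof of Theorem \ref{Thm2} with a forward radial SLE$_\kappa$ in place of the forward chordal one; the only new ingredient --- and the reason the conclusion weakens to absolute continuity --- is a Girsanov step that replaces forward radial SLE$_\kappa$ by forward radial SLE$(\kappa;\kappa+2)$. Since $\rho=0$, the trace $\beta$ of forward radial SLE$_\kappa$ is independent of its auxiliary marked point, so by rotation symmetry one may adjoin (independently of $\beta$) a marked point for which $Z_0=\lambda(0)-q(0)$ has the stationary law $\mu_{\delta}$, where $\delta=3+\frac8\kappa$ is the radial Bessel dimension attached to $(\kappa;\kappa+2)$, without altering the law of $\beta$. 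The radial Bessel dimensions attached to $(\kappa;0)$ and $(\kappa;\kappa+2)$, namely $\frac4\kappa+1$ and $3+\frac8\kappa$, both exceed $2$ --- this is where $\kappa<4$ is used --- so $Z$ stays in the interior of $(0,2\pi)$ and $\int_0^{t_0}\cot_2(Z_s)^2\,ds<\infty$; hence Girsanov's theorem gives that, restricted to $[0,t_0]$, the law of this marked forward radial SLE$_\kappa$ process is mutually absolutely continuous with that of the forward radial SLE$(\kappa;\kappa+2)$ process with the same initial data --- whose trace, up to the deterministic rotation fixing the starting point, is a stationary forward radial SLE$(\kappa;\kappa+2)$ trace. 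In particular the law of $\beta|_{[0,t_0]}$ is absolutely continuous with respect to that of a stationary forward radial SLE$(\kappa;\kappa+2)$ trace on $[0,t_0]$.

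I would then run the construction of Lemma \ref{W-trace} for a stationary forward radial SLE$(\kappa;\kappa+2)$ trace $\beta'$. Corollary \ref{reversal-trace}, applied with $\rho=-\kappa-6$ (so that $-4-\rho=\kappa+2$), identifies, in law, the reversed segment $\beta'(t_0-t)$, $0\le t\le t_0$, with the time-$t_0$ backward radial trace $\til\beta_{t_0}$ of a stationary backward radial SLE$(\kappa;-\kappa-6)$ process. Since $-\kappa-6\le-\frac\kappa2-2$, Proposition \ref{condition} yields the normalized backward radial trace, Lemmas \ref{lemma1}--\ref{lemma2} provide conformal maps $F_{\infty,t}$ on $\D$ (continuous up to $\lin\D$) with $\ha\beta(t)=F_{\infty,t_0}(\til\beta_{t_0}(t))$ on $[0,t_0]$, and Theorem \ref{cor-whole} identifies $\gamma(s)=I_\TT(\ha\beta(-s))$, $-\infty\le s\le0$, as a whole-plane SLE$(\kappa;\kappa+2)$ trace stopped at time $0$; thus $I_\TT\circ F_{\infty,t_0}$ maps $\til\beta_{t_0}$ conformally onto the tip segment $\gamma([-t_0,0])$. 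As in Lemma \ref{W-trace} --- but more simply, since here no M\"obius uniformization between $\HH$ and $\D$ is needed --- composing these maps, with extra randomness used to extend the backward radial process past time $t_0$, produces a random conformal map on $\D$ carrying $\beta'(t_0-t)$, $0\le t\le t_0$, onto a tip segment of a whole-plane SLE$(\kappa;\kappa+2)$ trace up to a time-change. Theorem \ref{reversibility-whole} then turns this tip segment, after translation by $-\gamma(0)$ and time reversal, into an initial segment of a whole-plane SLE$(\kappa;\kappa+2)$ trace; absorbing the translation into the conformal map gives a random conformal $V'$ on $\D$ with the desired property for $\beta'$. Transporting this through the absolute continuity above yields the required $V$ for the forward radial SLE$_\kappa$ trace $\beta$, with $V(\beta(t_0-t))$, $0\le t\le t_0$, having a law absolutely continuous with respect to that of an initial segment of a whole-plane SLE$(\kappa;\kappa+2)$ trace.

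The step I expect to be the main obstacle is the absolute-continuity bookkeeping: checking that the Girsanov change of measure is valid on all of $[0,t_0]$ (which forces $\kappa<4$ and is precisely why the statement is weaker than Theorem \ref{Thm2} rather than an exact identity), and that absolute continuity is preserved under Corollary \ref{reversal-trace}, the normalization of Lemmas \ref{lemma1}--\ref{lemma2}, the reversibility in Theorem \ref{reversibility-whole}, and the adjunction of the independent randomness needed to complete the backward radial process on $[t_0,\infty)$. A secondary point is composing the various time-changes so that the output is a genuine \emph{initial} segment with the normalization asserted in the statement.
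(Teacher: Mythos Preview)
Your approach is correct and mirrors the paper's proof closely; the only real difference is the order in which Girsanov and time-reversal are applied. The paper first uses rotation symmetry to view $\beta$ as a stationary forward radial SLE$(\kappa;0)$ trace, applies Corollary \ref{reversal-trace} to identify $\beta(t_0-\cdot)$ with a stationary backward radial SLE$(\kappa;-4)$ trace at time $t_0$, and \emph{then} invokes Girsanov to pass to a stationary backward radial SLE$(\kappa;-\kappa-6)$ trace, after which $F_{\infty,t_0}$ and $I_\TT$ (Lemmas \ref{lemma1}--\ref{lemma2}, Theorem \ref{cor-whole}) and finally Theorem \ref{reversibility-whole} finish as you describe. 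You instead apply Girsanov on the forward side, comparing forward SLE$(\kappa;0)$ with forward SLE$(\kappa;\kappa+2)$, and only then reverse via Corollary \ref{reversal-trace}. Since the associated radial Bessel dimensions coincide ($\tfrac4\kappa+1$ and $3+\tfrac8\kappa$ in both orderings), the two Girsanov steps are literally the same change of measure, so the approaches are equivalent; your choice of adjoining the force point with $Z_0\sim\mu_{3+8/\kappa}$ is a small nicety that makes the post-Girsanov process exactly stationary rather than merely absolutely continuous with respect to the stationary one. The concerns you flag about extending the backward process beyond $t_0$ with independent randomness and tracking absolute continuity through the measurable constructions are genuine bookkeeping items, but the paper handles them in the same (implicit) way.
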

\begin{proof}
From Theorem \ref{reversibility-whole}, it suffices to prove the theorem with ``an initial segment'' replaced by ``a tip segment at time $0$''.
  By rotation symmetry, we may assume that $\beta$ is a forward stationary radial SLE$(\kappa;0)$ trace. By Corollary \ref{reversal-trace}, $\beta(t_0-t)$, $0\le t\le t_0$, has the distribution of a backward stationary radial SLE$(\kappa;-4)$ trace at time $t_0$, say $\til\beta_{t_0}$. Girsanov's theorem implies that the distribution of $\til\beta_{t_0}$ is absolutely continuous w.r.t.\ a backward stationary radial SLE$(\kappa;-\kappa-6)$ trace at time $t_0$. This backward stationary radial SLE$(\kappa;-\kappa-6)$ trace at $t_0$ can then be mapped conformally to a tip segment of the normalized trace generated by the process. Finally, the reflection $I_\TT$ maps that tip segment to a tip segment of a whole-plane SLE$(\kappa;\kappa+2)$ trace at time $0$ thanks to Theorem \ref{cor-whole}.
\end{proof}

\section{Ergodicity}\label{ergodicity}
We will apply Theorems \ref{Thm2} and \ref{Thm-R} to study some ergodic behavior of the tip of a chordal or radial  SLE$_\kappa$ ($\kappa\in(0,4)$) trace at a deterministic half plane or disc capacity time.

Let $\gamma(t)$, $a\le t\le b$, be a simple curve in $\C$ such that $\gamma(a)=0$. We may reparameterize $\gamma$ using the whole-plane capacity. Let $T=\ccap(\gamma)$. Define $v$ on $[a,b]$ such that $v(a)=-\infty$ and $v(t)=\ccap(\gamma([a,t]))$, $a<t\le b$. Then $v$ is a strictly increasing function with $v([a,b])=[-\infty,T]$. It turns out that (c.f.\ \cite{LawSLE}) $\gamma^v(t):=\gamma(v^{-1}(t))$, $-\infty\le t\le T$, is a whole-plane Loewner trace driven by some $\lambda\in C((-\infty,T])$. Let $g_t$, $-\infty<t\le T$, be the corresponding maps. Then each $g_t^{-1}$ extends continuously to $\lin{\D^*}$ and maps $\TT$ onto $\gamma^v([-\infty,t])$. At time $t$, there are two special points on $\TT$, which are mapped by $g_t^{-1}$ to the two ends of $\gamma^v([-\infty,t])$. One is $e^{i\lambda(t)}$, which is mapped to $\gamma^v(t)$. Let $z(t)$ denote the point on $\TT$ which is mapped to $\gamma^v(-\infty)=0$. Then $z(t)$ satisfies the equation
$z'(t)=z(t)\frac{e^{i\lambda(t)}+z(t)}{e^{i\lambda(t)}-z(t)}$, $-\infty<t\le T$.
There exists a unique $q\in C((-\infty,T])$ such that $z(t)=e^{iq(t)}$ and $0<\lambda(t)-q(t)<2\pi$, $-\infty<t\le T$. Then $q(t)$ satisfies the equation $q'(t)=\cot_2(q(t)-\lambda(t))$, $-\infty<t\le T$.
 The number $\lambda(t)-q(t)\in(0,2\pi)$ has a geometric meaning. It is equal to $2\pi$ times the harmonic measure viewed from $\infty$ of the {\it right} side of $\gamma^v([-\infty,t])$ in $\ha\C\sem \gamma^v([-\infty,t])$.

Let $\kappa\le 4$ and $\rho\ge \frac\kappa 2-2$. A whole-plane SLE$(\kappa;\rho)$ process generates a simple trace, say $\gamma(t)$, $-\infty\le t<\infty$, which is  parameterized by whole-plane capacity. Recall the definition in Section \ref{Section-kappa-rho'}. There are $\lambda,q\in C(\R)$ such that $\lambda$ is the driving function, $q(t)$ satisfies the equation $q'(t)=\cot_2(q(t)-\lambda(t))$, and $Z(t):=\lambda(t)-q(t)\in(0,2\pi)$, $-\infty<t<\infty$, is a reversible stationary diffusion process with SDE: $dZ(t)=\sqrt\kappa dB(t)+(\frac\rho 2+1)\cot_2(Z(t))dt$.
Let $\mu_{\kappa;\rho}$ denote the invariant distribution for $(Z(t))$. Corollary \ref{Xtstationary'} shows that $\mu_{\kappa;\rho}$ has a density, which is proportional to $\sin_2(x)^{\frac 4\kappa(\frac\rho 2+1)}$. Corollary \ref{ergodic} shows that $(Z(t))$ is ergodic. Thus, for any $t_0\in\R$ and $f\in L^1(\mu_{\kappa;\rho})$, almost surely
\BGE \lim_{t\to-\infty} \frac{1}{t_0-t} \int_{t}^{t_0} f(Z(s))ds=\int f(x) d\mu_{\kappa;\rho}(x).\label{time-average}\EDE
We will proved that this property is preserved under conformal maps fixing $0$, as long as $f$ is uniformly continuous. The following lemma is obvious.

\begin{Lemma}
  Let $T_1,T_2\in\R$. Let $Z_j\in C((-\infty,T_j))$, $j=1,2$. Suppose that there is an increasing differential function $v$ defined on $(-\infty,T_1)$ such that $v((-\infty,T_1])=(-\infty,T_2]$, $v'(t)\to 1$ and $Z_2(v(t))-Z_1(t)\to 0$ as $t\to -\infty$. Let $f\in C(\R)$ be uniformly continuous. Then
  $$\lim_{t\to-\infty} \frac{1}{t_0-t} \int_{t}^{t_0} f(Z_1(s))ds=\lim_{t\to-\infty} \frac{1}{t_0-t} \int_{t}^{t_0} f(Z_2(s))ds $$
  as long as either limit exists and lies in $\R$ for some/every $t_0\in(-\infty, T_1\wedge T_2)$.  \label{Z-ergodic}
\end{Lemma}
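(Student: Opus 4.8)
The plan is to fix, without loss of generality, the version of the statement in which the limit with $Z_2$ exists: for arguments near $-\infty$ the hypotheses are symmetric under interchanging $Z_1,Z_2$ and replacing $v$ by $w:=v^{-1}$, which there is increasing and differentiable with $w'(\tau)=1/v'(w(\tau))\to 1$ and $Z_1(w(\tau))-Z_2(\tau)\to 0$. So call $L\in\R$ the common value of $\frac{1}{t_0-t}\int_t^{t_0}f(Z_2(s))\,ds$ and aim to show the same average built from $Z_1$ also converges to $L$. Throughout I use the elementary fact that if $\frac{1}{b-\tau}\int_\tau^b g(\sigma)\,d\sigma\to L$ as $\tau\to-\infty$ for one endpoint $b$, then it does so for every endpoint $b'$, since $\frac{1}{b'-\tau}\int_\tau^{b'}g=\frac{b-\tau}{b'-\tau}\cdot\frac{1}{b-\tau}\int_\tau^b g+\frac{1}{b'-\tau}\int_b^{b'}g$ and the three factors converge to $1$, $L$, and $0$; this justifies the ``some/every $t_0$'' in the statement and lets me move endpoints freely below.

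First I would replace $Z_1$ by $Z_2\circ v$: since $f$ is uniformly continuous and $Z_2(v(s))-Z_1(s)\to 0$, the continuous function $h(s):=f(Z_1(s))-f(Z_2(v(s)))$ tends to $0$ as $s\to-\infty$, and any such function satisfies $\frac{1}{t_0-t}\int_t^{t_0}h\to 0$ (given $\eps$, choose $M$ with $|h|<\eps$ on $(-\infty,M)$ and split the integral at $M$). Hence it suffices to prove $\frac{1}{t_0-t}\int_t^{t_0}f(Z_2(v(s)))\,ds\to L$. Substituting $\sigma=v(s)$ turns the integral $\int_t^{t_0}f(Z_2(v(s)))\,ds$ into $\int_{v(t)}^{v(t_0)}f(Z_2(\sigma))\,\frac{d\sigma}{v'(v^{-1}(\sigma))}$; write $\frac{1}{v'(v^{-1}(\sigma))}=1+\eta(\sigma)$ with $\eta(\sigma)\to 0$ (because $v'\to 1$ and $v^{-1}(\sigma)\to-\infty$). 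Also $v(t_0)-v(t)=\int_t^{t_0}v'(s)\,ds=(t_0-t)+\int_t^{t_0}(v'(s)-1)\,ds=(t_0-t)(1+o(1))$, while $v(t)\to-\infty$ and $v(t_0)$ is a fixed endpoint. Combining,
$$\frac{1}{t_0-t}\int_t^{t_0}f(Z_2(v(s)))\,ds=(1+o(1))\,\frac{1}{v(t_0)-v(t)}\int_{v(t)}^{v(t_0)}f(Z_2(\sigma))\,d\sigma+\frac{1}{t_0-t}\int_{v(t)}^{v(t_0)}f(Z_2(\sigma))\,\eta(\sigma)\,d\sigma,$$
and the first term on the right tends to $L$ by the hypothesis together with the endpoint remark (applied at $v(t_0)$, along $\tau=v(t)$).

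The one point needing care, and where I expect the actual work to lie, is the last error term. Given $\delta>0$, pick $M'$ with $|\eta|<\delta$ on $(-\infty,M']$; for $t$ small enough split $\int_{v(t)}^{v(t_0)}f(Z_2)\eta$ at $M'$. The piece over $[M',v(t_0)]$ is a fixed finite constant (a continuous integrand on a compact interval) divided by $t_0-t\to\infty$, hence vanishes; the piece over $[v(t),M']$ is bounded by $\delta\int_{v(t)}^{v(t_0)}|f(Z_2(\sigma))|\,d\sigma$, so, using $\frac{v(t_0)-v(t)}{t_0-t}\to 1$ once more,
$$\limsup_{t\to-\infty}\left|\frac{1}{t_0-t}\int_{v(t)}^{v(t_0)}f(Z_2(\sigma))\,\eta(\sigma)\,d\sigma\right|\le\delta\cdot\limsup_{\tau\to-\infty}\frac{1}{v(t_0)-\tau}\int_\tau^{v(t_0)}|f(Z_2(\sigma))|\,d\sigma .$$
Letting $\delta\to 0$ then finishes the proof, provided the running average $\frac{1}{b-\tau}\int_\tau^b|f(Z_2(\sigma))|\,d\sigma$ stays bounded as $\tau\to-\infty$; this holds because $f$ is bounded on the range of $Z_2$, which in every situation where the lemma is applied is a bounded interval. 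Everything else is routine bookkeeping.
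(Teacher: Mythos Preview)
The paper offers no proof of this lemma, declaring it ``obvious.'' Your argument is the natural one and is correct: reduce via uniform continuity of $f$ to comparing averages of $g:=f\circ Z_2$ with averages of $g\circ v$, change variables, and control the error coming from $v'-1$.

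The one point you flag at the end --- that the last step needs the running average of $|f\circ Z_2|$ to stay bounded --- is a genuine gap in the lemma as stated, not in your proof. Uniform continuity of $f$ alone does not give this: with $f(x)=x$ (uniformly continuous) and $Z_2$ oscillating with slowly growing amplitude but rapidly growing frequency, one can make the $Z_2$-average converge to $0$ while a suitable $v$ with $v'\to 1$, resonating with the oscillations, makes the $Z_1$-average diverge (take $Z_1=Z_2\circ v$). So the lemma as printed is slightly too strong; it becomes correct once $f$ is bounded on the range of $Z_2$, which is the case in every application in the paper (there $Z_j\in(0,2\pi)$ and $f\in C([0,2\pi])$). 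Your proof together with this caveat is complete, and your instinct to isolate this hypothesis is well placed.
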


We will need some properties of $\C$-hulls. Let $K$ be a $\C$-hull such that $\{0\}\subsetneqq K$. The following well-known fact follows from Schwarz lemma and Koebe's $1/4$ theorem (c.f.\ \cite{Ahl}):
\BGE e^{\ccap(K)}\le \max_{z\in K} |z|\le 4e^{\ccap(K)}.\label{diam-cap}\EDE

\begin{Lemma}
  For the above $K$, $|e^{\ccap(K)} g_K(z)-z|\le 5e^{\ccap(K)}$ for any $z\in\C\sem K$. \label{g_K-z}
\end{Lemma}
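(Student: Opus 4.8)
The plan is to transfer the estimate to the exterior disc via $f_K=g_K^{-1}$ and then apply the maximum modulus principle, using (\ref{diam-cap}) to control boundary values. Write $c=\ccap(K)$, so that $g_K:\ha\C\sem K\conf\D^*$ with $g_K(\infty)=\infty$ and $g_K'(\infty)=e^c$. Then $f_K$ has a Laurent expansion $f_K(w)=e^cw+a_0+a_1w^{-1}+\cdots$ valid for all $|w|>1$, so $\phi(w):=f_K(w)-e^cw$ extends to an analytic function on $\D^*$ (with $\phi(\infty)=a_0$). Since $g_K$ maps $\C\sem K$ into $\{|w|>1\}$ (as $g_K(\infty)=\infty$), substituting $w=g_K(z)$ turns $\phi(w)$ into $z-e^cg_K(z)$; hence the claim is equivalent to $|\phi(w)|\le 5e^c$ for every $w\in\D^*$ with $|w|>1$.

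To prove this, I would first record that every cluster value of $f_K(w)$ as $w\to\zeta\in\TT$ within $\D^*$ lies in $\pa(\ha\C\sem K)=\pa K\subseteq K$, and therefore has modulus at most $\max_{z\in K}|z|\le 4e^c$ by (\ref{diam-cap}). This has two consequences: $\phi$ is bounded on $\D^*$ (it is analytic at $\infty$ and stays bounded near $\TT$), and for each $\zeta\in\TT$,
$$\limsup_{w\to\zeta,\ w\in\D^*}|\phi(w)|\le 4e^c+|e^c\zeta|=5e^c.$$
Viewing $\D^*$ as a disc via $w\mapsto 1/w$, $\phi$ is then a bounded analytic function on that disc all of whose boundary cluster values are bounded by $5e^c$, so the maximum modulus principle gives $|\phi(w)|\le 5e^c$ throughout $\D^*$. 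Specializing $w=g_K(z)$ gives $|e^cg_K(z)-z|\le 5e^c$ for all $z\in\C\sem K$, which is the assertion.

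The only real subtlety is that $g_K$ (equivalently $f_K$) need not extend continuously to $\TT$, so the bound on $\phi$ along $\TT$ must be phrased through cluster sets rather than through honest boundary values; this is why I package the boundary estimate via $\limsup$ and invoke the maximum principle in the form valid for bounded analytic functions. (Alternatively one could approximate $K$ by $\C$-hulls with smooth boundary and pass to the limit, using continuity of $\ccap$ and of the associated conformal maps, but the cluster-set argument is cleaner and self-contained.) Everything else — the expansion of $f_K$ at $\infty$ and the maximum modulus principle — is elementary, and the key geometric input (\ref{diam-cap}) is already established.
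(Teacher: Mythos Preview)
Your proof is correct and is essentially the same as the paper's: both recognize that $e^{\ccap(K)}g_K(z)-z$ (equivalently, your $\phi(w)=f_K(w)-e^{\ccap(K)}w$) is analytic at $\infty$, apply the maximum modulus principle, and bound the boundary contribution using (\ref{diam-cap}). The only cosmetic difference is that the paper works directly on $\ha\C\sem K$ while you pull back to $\D^*$ via $w=g_K(z)$ and phrase the boundary estimate through cluster sets; this is simply the same argument viewed in the other coordinate.
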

\begin{proof}
Since the derivative of $e^{\ccap(K)} g_K(z)$ at $\infty$ is $1$, $e^{\ccap(K)} g_K(z)-z$ extends analytically to $\ha\C\sem K$. Applying the maximum modulus principle, we see that $\sup_{z\in\C\sem K}|e^{\ccap(K)} g_K(z)-z|$ is approached by a sequence $(z_n)$ in $\C\sem K$ that tends to $K$. We have $|e^{\ccap(K)} g_K(z_n)|\to e^{\ccap(K)}$ and $\limsup |z_n|\le  \max_{z\in K}|z|$. The proof is completed by (\ref{diam-cap})
\end{proof}

Let $W$ be a conformal map, whose domain $\Omega$ contains $0$. Let $K$ be a $\C$-hull such that $\{0\}\subsetneqq K\subset\Omega$. Let $\Omega_K=g_K(\Omega\sem K)$, and define $W_K(z)=g_{W(K)}\circ W\circ g_K^{-1}(z)$ for $z\in\Omega_K$. Now $\Omega_K$ contains a neighborhood of $\TT$ in $\D^*$, and as $z\to\TT$ in $\Omega_K$, $W_K(z)\to\TT$ as well. Let $\Omega_K^\dagger=\Omega_K\cup\TT\cup I_\TT(\Omega_K)$. Schwarz reflection principle implies that $W_K$ extends to a conformal map on $\Omega_K^\dagger$ such that $W_K(\TT)=\TT$.


\begin{Lemma}
There are real constants $C_0<0$ and $C_1,C_2>0$ depending only on $\Omega$ and $W$ such that if $K$ is a $\C$-hull with $\{0\}\subsetneqq K$ and satisfies $\ccap(K)\le C_1$, then
\BGE |\ccap(W(K))-\ccap(K)-\log|W'(0)||\le C_1 e^{\frac 12\ccap(K)};\label{cap-W-K}\EDE
\BGE \log|W_K'(z)|\le C_2 e^{\frac 12\ccap(K)}/|\ccap(K)|,\quad z\in\TT.\label{WK'z}\EDE \label{W-K-est}
\end{Lemma}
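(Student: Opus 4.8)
The plan is to compare $W$ with its affine approximation $\Phi(z)=W(0)+W'(0)z$ at $0$ and to study the conjugated map $W_K=g_{W(K)}\circ W\circ g_K^{-1}$ on an annular neighbourhood of $\TT$. The guiding observation is that when $W=\Phi$ is affine, the scaling/rotation/translation rules for $g_K$ give, \emph{exactly}, $\ccap(\Phi(K))=\ccap(K)+\log|W'(0)|$ and $\Phi_K(w)=\tfrac{W'(0)}{|W'(0)|}w$, a pure rotation; so for general $W$ both displayed estimates must come from quantifying how much $W$ differs from $\Phi$ at the scale of $K$. We assume $\ccap(K)$ is below a negative threshold depending only on $\Omega$ and $W$. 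By (\ref{diam-cap}) this forces $K\subset\{|z|\le 4e^{\ccap(K)}\}$ to lie in a fixed disc $\{|z|<d_0/2\}\subset\Omega$ ($d_0=\dist(0,\pa\Omega)$) on which $W$ is bi-Lipschitz with constants depending only on $W$; combining this with (\ref{diam-cap}) applied to $W(K)-W(0)$ gives the crude a priori bound $\ccap(W(K))=\ccap(K)+O(1)$, which I will use to start a bootstrap. Finally, by Lemma \ref{g_K-z} one has $|g_K^{-1}(w)|\le e^{\ccap(K)}(|w|+5)<d_0/2$ when $|w|\le e^{-\ccap(K)/2}$, so $W_K$ is well defined on $\{1<|w|<e^{-\ccap(K)/2}\}$, maps it conformally into $\D^*$, and (Schwarz reflection) extends across $\TT$ with $W_K(\TT)=\TT$.

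For (\ref{cap-W-K}) I would argue by conformal modulus. Fix $\rho=e^{-\ccap(K)/2}$. Applying Lemma \ref{g_K-z} to $K$, then the second-order Taylor expansion of $W$ at $0$, then Lemma \ref{g_K-z} to the hull $W(K)-W(0)\ni 0$, and using $\ccap(W(K))=\ccap(K)+O(1)$ to see that the main terms dominate the $5e^{\ccap(K)}$- and $O(e^{2\ccap(K)})$-errors, one obtains, uniformly for $|w|=\rho$, the estimate $W_K(w)=\tfrac{W'(0)}{|W'(0)|}\,e^{\,\ccap(K)+\log|W'(0)|-\ccap(W(K))}\,w\,(1+O(e^{\ccap(K)/2}))$. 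Hence $W_K$ maps the circle $\{|w|=\rho\}$ into the round shell $\{R(1-Ce^{\ccap(K)/2})\le|w|\le R(1+Ce^{\ccap(K)/2})\}$, where $R=e^{-\ccap(K)/2}e^{\,\log|W'(0)|+\ccap(K)-\ccap(W(K))}$. Since $W_K$ is a conformal bijection of the doubly connected domain $\{1<|w|<\rho\}$ onto the region lying between $\TT$ and the Jordan curve $W_K(\{|w|=\rho\})$, and the modulus is a conformal invariant, monotonicity of the modulus under inclusion (sandwiching that region between the true annuli bounded by $\TT$ and $\{|w|=R(1\mp Ce^{\ccap(K)/2})\}$) gives $\log R=\log\rho+O(e^{\ccap(K)/2})$, i.e. $\log|W'(0)|+\ccap(K)-\ccap(W(K))=O(e^{\ccap(K)/2})$, which is (\ref{cap-W-K}).

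For (\ref{WK'z}), feeding (\ref{cap-W-K}) back into the displayed asymptotics gives $W_K(w)=\tfrac{W'(0)}{|W'(0)|}w\,(1+O(e^{\ccap(K)/2}))$ on $\{|w|=\rho\}$, so $u(w):=\log|W_K(w)/w|$ satisfies $|u|\le Ce^{\ccap(K)/2}$ there, while $u\equiv 0$ on $\TT$ since $|W_K(w)|=|w|=1$ on $\TT$. Because $W_K$ extends conformally across $\TT$ preserving $\TT$, the identity $W_K(1/\bar w)=1/\overline{W_K(w)}$ holds, so $u$ is odd under $w\mapsto 1/\bar w$; in particular $|u|\le Ce^{\ccap(K)/2}$ also on $\{|w|=1/\rho=e^{\ccap(K)/2}\}$. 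Now $u$ is harmonic on $\{1<|w|<\rho\}$, vanishes on the inner circle $\TT$, and is $O(e^{\ccap(K)/2})$ on the outer circle; comparison with the harmonic measure of the outer circle, which at radius $|w|$ equals $\log|w|/\log\rho$, yields $|u(w)|\le C e^{\ccap(K)/2}\log|w|/\log\rho=2Ce^{\ccap(K)/2}|{\log|w|}|/|\ccap(K)|$ — this is the source of the extra factor $1/|\ccap(K)|$ — and similarly on the reflected side. Thus $|u|\le C'e^{\ccap(K)/2}/|\ccap(K)|$ on the fixed annulus $\{e^{-1}<|w|<e\}$, and the interior gradient estimate for harmonic functions bounds $|\nabla u|$ on $\TT$ by $\le C''e^{\ccap(K)/2}/|\ccap(K)|$. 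Writing $W_K(w)=w\,e^{g(w)}$ with $g=\log(W_K(w)/w)$ a single-valued analytic branch near $\TT$ (legitimate since $W_K$ is zero-free there and $W_K(\TT)=\TT$ has degree one), one has $\Re g=u$ and $\log|W_K'(w)|=u(w)+\Re\log(1+w\,g'(w))$; on $\TT$, $u=0$ and $|g'|=|\nabla u|\le C''e^{\ccap(K)/2}/|\ccap(K)|$, giving $|\log|W_K'(w)||\le C_2e^{\ccap(K)/2}/|\ccap(K)|$, which is (\ref{WK'z}).

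The main obstacle is the bookkeeping in the asymptotics for $W_K$ on $\{|w|=\rho\}$: there are three distinct error sources — the two $5e^{\ccap(K)}$ errors from Lemma \ref{g_K-z} and the $O(e^{2\ccap(K)})$ Taylor remainder of $W$ — and the choice $\rho=e^{-\ccap(K)/2}$ is exactly the scale at which, relative to the main terms (whose moduli are $\asymp e^{\pm\ccap(K)/2}$), \emph{all} of these become $O(e^{\ccap(K)/2})$; this balancing is what forces $\ccap(K)$ to be sufficiently negative, and it requires establishing the crude bound $\ccap(W(K))=\ccap(K)+O(1)$ first so that $e^{-\ccap(W(K))}$ may be replaced by $e^{-\ccap(K)}$ up to bounded factors. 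The second subtlety is obtaining the \emph{sharp} decay $e^{\ccap(K)/2}/|\ccap(K)|$ in (\ref{WK'z}) rather than merely $e^{\ccap(K)/2}$: this exploits that $u$ vanishes on $\TT$ and is harmonic on an annulus whose modulus grows linearly in $|\ccap(K)|$, so the harmonic-measure bound near $\TT$ gains the extra factor $1/|\ccap(K)|$.
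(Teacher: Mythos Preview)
Your proposal is correct and follows the same overall architecture as the paper's proof: both first establish the crude bound $\ccap(W(K))=\ccap(K)+O(1)$ (the paper phrases this as $|\alpha|\le 5$ with $\alpha=W'(0)e^{\ccap(K)-\ccap(W(K))}$), then track $W_K$ on the circle of radius $\asymp e^{-\ccap(K)/2}$ by composing Lemma~\ref{g_K-z}, the Taylor expansion of $W$ at $0$, and Lemma~\ref{g_K-z} again, arriving at exactly your asymptotic $W_K(w)=\alpha w+O(1)$ there.

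The differences are in the finishing tools. For (\ref{cap-W-K}) you invoke conformal modulus monotonicity on the annulus $\{1<|w|<\rho\}$; the paper instead writes $W_K(z)=ze^{V_K(z)}$, observes that $\Ree V_K\equiv 0$ on $\TT$, and uses Cauchy's theorem to equate $\int_0^{2\pi}\Ree V_K(Re^{i\theta})d\theta$ with the integral over $\TT$, which is zero --- this forces $|\log|\alpha||\le 40C/R$. For (\ref{WK'z}) you bound the harmonic function $u=\Ree V_K$ by the harmonic measure of the outer circle (exploiting $u|_\TT=0$ to gain the factor $\log|w|/\log\rho$) and then apply an interior gradient estimate on a fixed subannulus; the paper instead passes to the strip via $\til V_K=V_K\circ\exp$, uses reflection across $i\R$ and $2\pi i$-periodicity to propagate the bound $|V_K-i\Imm\til\alpha|\le O(1/R)$ throughout $\{|\Ree z|\le\log R\}$, and then reads off $|\til V_K'|\le O(1/(R\log R))$ on $i\R$ directly from Cauchy's integral formula on a disc of radius $\log R$. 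Your route is more potential-theoretic, the paper's more complex-analytic; both extract the same $1/|\ccap(K)|$ gain from the growing width of the annulus, and both require $\ccap(K)$ sufficiently negative for the same bookkeeping reasons you identify. One small remark: in the paper's setting (and in its proof) one has $W(0)=0$, so your $\Phi(z)=W(0)+W'(0)z$ simplifies to $W'(0)z$; your translation to $W(K)-W(0)$ is unnecessary here but harmless.
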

\begin{proof}
Since $W(0)=0$ and $W'(0)\ne 0$, there is $V$ analytic in a neighborhood $\Omega'\subset\Omega$ of $0$ such that $V(0)=0$ and
$W(z)=W'(0)ze^{V(z)}$ in $\Omega'$.
There exist positive constants $C\ge 1$ and $\delta\le \frac 1{10}$ such that $|z|\le \delta$ implies that $z\in\Omega'$ and $|V(z)|\le C|z|$. Thus,
\BGE |W(z)|\ge |W'(0)||z|e^{-C|z|},\quad |W(z)-W'(0)z|\le |W'(0)||z|(e^{C|z|}-1),\quad |z|\le \delta.\label{W-z}\EDE

Suppose $K$ is a $\C$-hull with $\{0\}\subsetneqq K$, and satisfies $e^{\ccap(K)}\le  \delta^2\wedge \frac 1{(320C)^2}$.
From (\ref{diam-cap}) we see that $K\subset\{|z|\le 4\delta^2\}\subset\{|z|\le \delta\}\subset\Omega$. So $W(K)$ and $W_K$ are well defined.
Using (\ref{diam-cap}) and the connectedness of $K$, we may choose $z_0\in K$ such that $|z_0|=e^{\ccap(K)}$. Using (\ref{W-z}) we get $$|W(z_0)|\ge  |W'(0)||z_0|e^{-C|z_0|}\ge |W'(0)|e^{\ccap(K)}e^{-1/5}\ge \frac 45 |W'(0)|e^{\ccap(K)}.$$
 Since $W(z_0)\in W(K)$, using (\ref{diam-cap}) again, we get $\ccap(W(K))\ge \frac 14|W(z_0)|\ge \frac 15 |W'(0)|e^{\ccap(K)}$. Let $\alpha=\alpha_{W,K}=W'(0)e^{\ccap(K)-\ccap(W(K))}$. Then we have  $|\alpha|\le 5$.

 Let $R=\frac 12e^{-\frac 12\ccap(K)}$, $z_1\in\{|z|=R\}$, and $z_2=g_K^{-1}(z_1)$. From Lemma \ref{g_K-z}, we get
$$ |z_2-e^{\ccap(K)} z_1|\le 5 e^{\ccap(K)}. $$
Since $R\ge \frac 12(\delta^2)^{-1/2}\ge 5$, we have
$$ |z_2|\le (R+5)e^{\ccap(K)}\le 2Re^{\ccap(K)} =e^{\frac 12\ccap(K)}\le  \delta\wedge \frac1{360C}.$$
Let $J$ denote the Jordan curve $g_K^{-1}(\{|z|=R\})$, and $U_J$ denote its interior. Then $J\subset\{|z|\le \delta\}$, which implies that $U_J\subset\{|z|\le \delta\}\subset\Omega$. Since $g_K^{-1}$ maps the annulus $\{1<|z|\le R\}$ conformally onto $(J\cup U_J)\sem K\subset\Omega\sem K$, we see that $\{1<|z|\le R\}\subset \Omega_K$, and so $\{1/R\le |z|\le R\}\subset\Omega_K^\dagger$.
Let $z_3=W(z_2)$. Using (\ref{W-z}) and $0\le C|z_2|\le 1$, we get
$$ |z_3-W'(0)z_2|\le  |W'(0)||z_2|(e^{C|z_2|}-1)\le 2C |W'(0)||z_2|^2\le 2C|W'(0)| e^{\ccap(K)}.$$
Let $z_4=g_{W(K)}(z_3)$. From Lemma \ref{g_K-z} we get
$$ |z_4-e^{-\ccap(W(K))}z_3|\le 5.$$
Combining the above four displayed formulas and that $|\alpha|\le 5$, we get
 $$|z_4-\alpha z_1|\le 5+2 C |\alpha|+5|\alpha|\le 30+10C\le 40C.$$
 Note that $z_4=W_K(z_1)$. So we get
\BGE |W_K(z)-\alpha z|\le 40C,\quad |z|=R.\label{z4-z1}\EDE
\BGE |\alpha|R-40C\le|W_K(z)|\le |\alpha|R+40C,\quad |z|=R.\label{WKz}\EDE

We may find $R'>R$ such that $A:=\{1/R'<|z|<R'\}\subset \Omega_K^\dagger$. Then $W_K$ is analytic in $A$.
 Since $W_K$ is an orientation preserving auto homeomorphism of $\TT$, there is an analytic function $V_K$ such that $W_K(z)=e^{V_K(z)}z$ in $A$. We have $\Ree V_K(z)=\log|W_K(z)|-\log|z|$. Thus, $\Ree V_K\equiv 0$ on $\TT$.
 Cauchy's theorem implies that $\oint_{|z|=1} \frac{V_K(z)}zdz=\oint_{|z|=R} \frac{V_K(z)}zdz$, which means that $\int_0^{2\pi} V_K(e^{i\theta})d\theta=\int_0^{2\pi} V_K(Re^{i\theta})d\theta$. So we get
 $$0=\int_0^{2\pi} \Ree V_K(e^{i\theta})d\theta=\int_0^{2\pi} \Ree V_K(Re^{i\theta})d\theta=\int_0^{2\pi} (\log|W_K(Re^{i\theta})|-\log R)d\theta.$$
Using (\ref{WKz}), we get $ |\alpha|R-40C\le R\le |\alpha|R+40C$, which implies that $|1-|\alpha||\le \frac{40C}R$. This implies (\ref{cap-W-K}) since $\log|\alpha|=\log|W'(0)|+\ccap(K)-\ccap(W(K))$ and $1/R=O(e^{\frac 12\ccap(K)})$.

Let $|z|=R$. From (\ref{z4-z1}), we get $|e^{V_K(z)}-\alpha |\le \frac{40C}R$. Since $|\alpha |\ge 1-\frac{40C}R$, we have $|e^{V_K(z)}|\ge 1-\frac{80C}R\ge \frac 12$ as $R\ge 160C$. So there exists $\til \alpha\in\C$ with $\alpha=e^{\til \alpha}$ such that $|V_K(z)-\til \alpha|\le 2|e^{V_K(z)}-\alpha|\le \frac{80C}R$. From $||\alpha|-1|\le \frac{40C}R$, we get $|\Ree\til \alpha|=|\log|\alpha||\le \frac{80C}R$.
Thus, $|V_K(z)-i\Imm\til \alpha|\le \frac{160C}R$ if $|z|=R$.
Let $\til V_K=V_K\circ \exp$. Then $\til V_K$ is analytic in the vertical strip $\til A:=\exp^{-1}(A)=\{-\log R'<\Ree z<\log R'\}$, and is pure imaginary on $i\R$. Thus, $\til V_K(-\lin z)=-\lin{\til V_K(z)}$. This implies that, on the two vertical lines $\{\Ree z=\log R\}$ and $\{\Ree z=-\log R\}$, $|\til V_K(z)-i\Imm\til \alpha|\le  \frac{160C}R$. Since $\til V_K$ has period $2\pi i$, the inequality holds in the strip $\{-\log R\le \Ree z\le \log R\}$. We may apply Cauchy's integral formula, and get $|\til V_K'(z)|\le \frac{160C}{R\log R}$ for $z\in i\R$.
Since $\til V_K(z)=V_K\circ \exp$, $e^{V_K(z)}=\frac{W_K(z)}z$ and $W_K(\TT)=\TT$, we get$$
\Big |W_K'(z)-\frac{W_K(z)}z \Big|=|\til V_K'(\log z)|\le \frac{160C}{R\log R},\quad z\in\TT.$$
This implies (\ref{WK'z}) since $\log R\ge |\ccap(K)|/4$  and $1/R=O(e^{\frac 12\ccap(K)})$.
\end{proof}

Now suppose $\gamma(t)$, $-\infty\le t<T$, is a simple whole-plane Loewner trace driven by $\lambda\in C((-\infty,T))$. Let $\Omega$ be a domain that contains $\gamma$. Let $W$ be a conformal map defined on $\Omega$ such that $W(0)=0$. Let $\beta(t)=W(\gamma(t))$, $-\infty\le t<T$. Define $v$ on $[-\infty,T)$ such that $v(-\infty)=-\infty$ and $v(t)=\ccap(\beta([-\infty,t]))$ for $-\infty<t<T$. Let $\til T=v(T)$ and $\til\gamma(t)=\beta(v^{-1}(t))$, $-\infty\le t<\til T$. Then $\til\gamma$ is a simple whole-plane Loewner trace, say driven by $\til\lambda\in C((-\infty,\til T))$. Let $(g_t)$ and $(\til g_t)$ be the whole-plane Loewner maps driven by $\lambda$ and $\til\lambda$, respectively.
Then, $g_t^{-1}(e^{i\lambda(t)})=\gamma(t)$ and $\til g_t^{-1}(e^{i\til\lambda(t)})=\til \gamma(t)$.
Let $z(t)$ and $\til z(t)$ be  such that $g_t^{-1}(z(t))=0$ and $\til g_t^{-1}(\til z(t))=0$. Choose $q\in C((-\infty,T))$ and $\til q\in C((-\infty,\til T))$ such that $z(t)=e^{iq(t)}$, $\til z(t)=e^{i\til q(t)}$, $\lambda(t)-q(t)\in(0,2\pi)$, and $\til\lambda(t)-\til q(t)\in(0,2\pi)$.
Let $Z=\lambda-q$ and $\til Z=\til\lambda-\til q$.

Let $K_t=\gamma([-\infty,t])$ and $\til K_t=\til\gamma([-\infty,t])$. Recall that $g_t=g_{K_t}$ and $\til g_t=g_{\til K_t}$.
For $-\infty< t<T$, let $\Omega_t=\Omega_{K_t}$, $\Omega^\dagger_t=\Omega^\dagger_{K_t}$, and $W_t=W_{K_t}$. Then $W_t$ is a conformal map defined on $\Omega^\dagger_t\supset\TT$ such that $W_t(\TT)=\TT$. Since $W(K_t)=\til K_{v(t)}$, we have $W_t=\til g_{v(t)}\circ W\circ g_t^{-1}$ in $\Omega_t$. Since $g_t^{-1}(e^{i\lambda(t)})=\gamma(t)$ and $\til g_{v(t)}^{-1}(e^{i\til\lambda(v(t))})=\til \gamma(v(t))$ when both $g_t^{-1}$ and $\til g_{v(t)}$ extends continuously to $\D^*\cup\TT$, and $W(\gamma(t))=\til\gamma(v(t))$, we get $W_t(e^{i\lambda(t)})=e^{i\til\lambda(v(t))}$. Similarly, since  $g_t^{-1}(e^{iq(t)})=0=\til g_{v(t)}^{-1}(e^{i\til q(v(t))})$ and $W(0)=0$, we have $W_t(e^{i q(t)})=e^{i\til q(v(t))}$. Thus, we get
\BGE \til Z(v(t))= \til \lambda(v(t))-\til q(v(t))=\int_{q(t)}^{\lambda(t)} |W_t'(e^{is})|ds.\label{derivative-W}\EDE

\begin{Lemma}
  For any $t\in(-\infty,T)$, $v'(t)=|W_t'(e^{i\lambda(t)})|^2$. \label{v'}
\end{Lemma}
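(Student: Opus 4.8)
The plan is to realize $v'(t)$ as a limit of difference quotients, using the additivity of the whole-plane capacity under composition together with the conformal covariance of capacity at a boundary point. Recall first that if $A\subset B$ are $\C$-hulls, then $g_B=h\circ g_A$, where $h$ is the conformal map, normalized at $\infty$, of $\D^*$ minus the $\D^*$-hull $P$ obtained as the closure in $\D^*$ of $g_A(B\sem A)$; hence $g_B'(\infty)=g_A'(\infty)\,h'(\infty)$, i.e.\ $\ccap(B)=\ccap(A)+\ccap_{\D^*}(P)$, where $\ccap_{\D^*}(P):=\log h'(\infty)$ is the $\D^*$-capacity of $P$.

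Fix $t\in(-\infty,T)$. For small $s>0$, let $P_s$ be the $\D^*$-hull coming from $A=K_t\subset B=K_{t+s}$ as above; it is a small hull attached to $\TT$ at $g_t(\gamma(t))=e^{i\lambda(t)}$, and $\ccap_{\D^*}(P_s)=\ccap(K_{t+s})-\ccap(K_t)=s$. Running the same construction for $\til\gamma$ with $A=\til K_{v(t)}\subset B=\til K_{v(t+s)}=W(K_{t+s})$, and using the identities $W(K_r)=\til K_{v(r)}$ and $W_t=\til g_{v(t)}\circ W\circ g_t^{-1}$ on the neighbourhood $\Omega_t$ of $\TT$, one checks that the corresponding $\D^*$-hull equals $W_t(P_s)$, so $\ccap_{\D^*}(W_t(P_s))=v(t+s)-v(t)$. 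Here I use that as $s\to0^+$ the hull $P_s$ shrinks to $\{e^{i\lambda(t)}\}$ (continuity of the trace $\gamma$ and of the boundary extension of $g_t$ near $\gamma(t)$), so that eventually $P_s\subset\Omega_t$, $W_t$ is honestly conformal on a neighbourhood of $P_s$, $W_t(\TT)=\TT$, and $W_t$ maps the $\D^*$-side to the $\D^*$-side. Consequently $\frac{v(t+s)-v(t)}{s}=\frac{\ccap_{\D^*}(W_t(P_s))}{\ccap_{\D^*}(P_s)}$.

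Now I invoke the conformal covariance of capacity at a boundary point: if $\Phi$ is conformal near $p\in\TT$ with $\Phi(\TT)\subset\TT$ and $\Phi$ mapping the $\D^*$-side to the $\D^*$-side, and $(M_n)$ are $\D^*$-hulls with $M_n\to\{p\}$, then $\ccap_{\D^*}(\Phi(M_n))/\ccap_{\D^*}(M_n)\to|\Phi'(p)|^2$. This is the $\D^*$-counterpart of the standard half-plane-capacity covariance $\hcap(\psi(H))\sim\psi'(0)^2\hcap(H)$ (cf.\ \cite{LawSLE}), obtained from it by a M\"obius change of coordinates sending $p$ to $0\in\R$, the corrections from the differing normalization points cancelling in the ratio; it can also be proved directly by distortion estimates in the spirit of Lemma \ref{W-K-est}. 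Applying it with $\Phi=W_t$, $p=e^{i\lambda(t)}$, $M_s=P_s$ shows that $v$ has right derivative $|W_t'(e^{i\lambda(t)})|^2$ at every $t$. Since $v$ is continuous and nondecreasing, $t\mapsto|W_t'(e^{i\lambda(t)})|^2$ is continuous (as $\lambda$ and $v$ are continuous, the maps $g_t^{-1}$, hence $W_t=\til g_{v(t)}\circ W\circ g_t^{-1}$ and $W_t'$, vary continuously near $\TT$ with $t$), and a continuous function whose right derivative exists everywhere and is continuous is $C^1$, we conclude $v'(t)=|W_t'(e^{i\lambda(t)})|^2$.

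I expect the main obstacle to be the bookkeeping surrounding the covariance statement: matching the three capacities $\hcap$, $\dcap$, $\ccap_{\D^*}$, which are normalized at different base points; verifying that $W_t$ genuinely maps the $\D^*$-side to the $\D^*$-side near $e^{i\lambda(t)}$; and making the $o(1)$ error in the covariance uniform enough as $P_s$ shrinks. The additivity identity and the identification $\ccap_{\D^*}(W_t(P_s))=v(t+s)-v(t)$ are then routine.
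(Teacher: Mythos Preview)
Your argument is correct and follows essentially the same route as the paper's own proof. The only cosmetic difference is that the paper composes with the reflection $I_\TT$ to work with $\D$-hulls and $\dcap$ (invoking Lemma 2.1 of \cite{Zhan} for the covariance $\dcap(\Phi(K))/\dcap(K)\to|\Phi'(z_0)|^2$), whereas you stay on the $\D^*$-side and phrase the same covariance in terms of your $\ccap_{\D^*}$; these are identical after conjugation by $I_\TT$. Both proofs compute only the right derivative and then appeal to the continuity of $(t,z)\mapsto W_t'(z)$ to upgrade to full differentiability.
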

\begin{proof}
  Fix $t_0\in(-\infty,T)$. Let $t\in(0,T-t_0)$, and $K_{t_0+t,t_0}=I_\TT\circ g_{t_0}(\gamma((t_0,t_0+t]))$. Then $K_{t_0+t,t_0}$ is a $\D$-hull, and $g_{K_{t_0+t,t_0}}=I_\TT\circ g_{t_0+t}\circ g_{t_0}^{-1}\circ I_\TT$. Since $\lim_{z\to\infty} g_s(z)/z=e^{-s}$ for every $s$, we have $g_{K_{t_0+t,t_0}}'(0)=e^t$, which implies that $\dcap(K_{t_0+t,t_0})=t$. Let $\til K_{v(t_0+t),v(t_0)}=I_\TT\circ g_{v(t_0)}(\gamma((v(t_0),v(t_0+t)]))$. Then $\til K_{v(t_0+t),v(t_0)}$ is also a $\D$-hull, and $\dcap(\til K_{v(t_0+t),v(t_0)})=v(t_0+t)-v(t_0)$. Let $W^\TT_t=I_\TT\circ W_t\circ I_\TT$. Then $W^\TT_T$ is conformal in a neighborhood of $\TT$, maps $\TT$ onto $\TT$, and satisfies $W^\TT_t(K_{t_0+t,t_0})=\til K_{v(t_0+t),v(t_0)}$. Note that $(K_{t_0+t,t_0})$ is an increasing family in $t$, and satisfies $\bigcap_{t} \lin{K_{t_0+t,t_0}}=\{e^{i\lambda(t_0)}\}$. We now use the following well-known fact: for any $z_0\in\TT$, $\lim_{K\to z_0}\frac{\dcap (W^\TT_t(K))}{\dcap(K)}=|(W^\TT_t)'(z_0)|^2=|W_t'(z_0)|^2$, where $K\to z_0$ means that $K$ is a nonempty $\D$-hull, and $\diam(K\cup\{z_0\})\to 0$. This follows, e.g., from Lemma 2.1 in \cite{Zhan}. Thus, we have $\lim_{t\to 0^+} (v(t_0+t)-v(t_0))/t=|W_t'(e^{i\lambda(t_0)})|^2$. Finally, applying the maximum modulus principle, one can easily show that $(t,z)\mapsto W_t'(z)$ is continuous on $(-\infty,T)\times\TT$. So the proof is completed.
\end{proof}

Applying Lemma \ref{W-K-est} to $K=\gamma([-\infty,t])$ and using (\ref{derivative-W}) and Lemma \ref{v'}, we get
\BGE  \lim_{t\to-\infty} |\til Z(v(t))-Z(t)|=0,\quad \lim_{t\to-\infty} v'(t)=1,\quad \lim_{t\to-\infty} v(t)-t=\log|W'(0)|.\label{Z-Z}\EDE
Lemma \ref{Z-ergodic} implies that, if $f$ is continuous on $[0,2\pi]$, then
$$\lim_{t\to-\infty} \frac{1}{t_0-t} \int_t^{t_0} f(Z(s))ds=\lim_{t\to-\infty} \frac{1}{t_0-t} \int_t^{t_0} f(\til Z(s))ds,\quad t_0\in(-\infty, T\wedge \til T),$$ if either limit exists.
Using (\ref{time-average}) we obtain the following proposition.

\begin{Proposition}
Let $\kappa\le 4$ and $\rho\ge \frac\kappa 2-2$. Let $\gamma(t)$, $-\infty\le t<\infty$, be a whole-plane SLE$(\kappa;\rho)$ trace. Suppose that $W$ is a random conformal map with (random) domain $\Omega\ni 0$ such that $W(0)=0$. Let $T$ be such that $\gamma([-\infty,T))\subset \Omega$. Let $\til\gamma$ be a reparametrization of $W(\gamma(t))$, $-\infty\le t<T$, such that $\til\gamma(-\infty)=0$ and $\ccap(\til\gamma([-\infty,t]))=t$ for $-\infty<t<\til T$. Let $h(t)\in(0,1)$ denote the harmonic measure of the right side of $\til\gamma([-\infty,t])$ in $\ha\C\sem \til\gamma([-\infty,t])$ viewed from $\infty$. Then for any $f\in C([0,2\pi])$ and $t_0\in(-\infty,\til T)$, almost surely
$$\lim_{t\to -\infty} \frac{1}{t_0-t} \int_t^{t_0} f(2\pi h(s))ds=\int_0^{2\pi} f(x)d\mu_{\kappa;\rho}(x)
=\frac{\int_0^{2\pi}f(x)\sin_2(x)^{\frac 4\kappa(\frac\rho 2+1)}dx }{\int_0^{2\pi}\sin_2(x)^{\frac 4\kappa(\frac\rho 2+1)}dx }.$$ \label{prop3}
\end{Proposition}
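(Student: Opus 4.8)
The plan is to reduce the statement to the ergodic theorem (\ref{time-average}) for the driving process of $\gamma$ itself, using the conformal-distortion estimates of Lemma \ref{W-K-est} to show that the driving process of $\til\gamma$ stays asymptotically close to that of $\gamma$, up to a time change whose derivative tends to $1$, as $t\to-\infty$. Concretely, let $\lambda\in C(\R)$ be the driving function of the whole-plane SLE$(\kappa;\rho)$ trace $\gamma$, and let $q$ and $Z=\lambda-q$ be the associated functions described just before the proposition, so that $(Z(t))$ is the reversible stationary diffusion with invariant law $\mu_{\kappa;\rho}$, whose density by Corollary \ref{Xtstationary'} is proportional to $\sin_2(x)^{\frac 4\kappa(\frac\rho 2+1)}$. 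Ergodicity of $(Z(t))$ (Corollary \ref{ergodic}) gives (\ref{time-average}); moreover, splitting $\int_t^{t_1}=\int_t^{t_0}+\int_{t_0}^{t_1}$ and using that $\frac{t_0-t}{t_1-t}\to 1$ shows that, whenever the time-average limit exists, it does not depend on the finite upper endpoint. Hence there is a single almost sure event on which $\frac{1}{t_1-t}\int_t^{t_1}f(Z(s))\,ds\to\int f\,d\mu_{\kappa;\rho}$ for every choice of $t_1$.

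Next I would invoke the deterministic construction of the discussion preceding the proposition, applied pathwise to the simple whole-plane Loewner trace $\gamma|_{[-\infty,T)}$ (simple since $\kappa\le 4$) and the conformal map $W$: it produces the capacity reparametrization $\til\gamma$ of $W(\gamma)$, a whole-plane Loewner trace driven by some $\til\lambda$, together with $\til q$, $\til Z=\til\lambda-\til q$, and the increasing $C^1$ time change $v$ with $v((-\infty,T])=(-\infty,\til T]$, satisfying (\ref{derivative-W}) and, by Lemma \ref{v'}, $v'(t)=|W_t'(e^{i\lambda(t)})|^2$. By the geometric meaning of $\til\lambda-\til q$ recalled before the proposition, $\til Z(t)=2\pi h(t)$ for all $t$. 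Since $\ccap(\gamma([-\infty,t]))=t\to-\infty$, for each realization the hull $K=\gamma([-\infty,t])$ eventually satisfies $\ccap(K)\le C_1$, where $C_1=C_1(\Omega,W)$ is the (finite) constant from Lemma \ref{W-K-est}; applying that lemma exactly as was done to obtain (\ref{Z-Z}) gives, almost surely, $|\til Z(v(t))-Z(t)|\to 0$ and $v'(t)\to 1$ as $t\to-\infty$.

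Finally, I would extend $f\in C([0,2\pi])$ to a uniformly continuous function on $\R$ (this changes nothing, since $Z$ and $\til Z$ take values in $(0,2\pi)$) and apply Lemma \ref{Z-ergodic} with $Z_1=Z$, $Z_2=\til Z$, and the time change $v$: on the almost sure event above, for any $t_0<T\wedge\til T$,
$$\lim_{t\to-\infty}\frac{1}{t_0-t}\int_t^{t_0}f(2\pi h(s))\,ds=\lim_{t\to-\infty}\frac{1}{t_0-t}\int_t^{t_0}f(Z(s))\,ds=\int_0^{2\pi}f\,d\mu_{\kappa;\rho}.$$
As in the first paragraph, once this limit exists it is independent of $t_0$, so it holds for every $t_0\in(-\infty,\til T)$; inserting the density of $\mu_{\kappa;\rho}$ yields the displayed formula.

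I do not expect a serious obstacle: all the analytic content is already contained in Lemmas \ref{W-K-est}, \ref{v'} and \ref{Z-ergodic} together with the ergodic input (\ref{time-average}), so the proof is essentially an assembly of these pieces. The only points that require care are bookkeeping ones: that Lemma \ref{W-K-est} and (\ref{Z-Z}) are stated for a deterministic conformal map whereas here $W$ is random (harmless, since the estimates are pathwise and the constants $C_0,C_1,C_2$ are finite for each realization), reconciling the single almost sure event from ergodicity with the pathwise comparison of the second paragraph, and bridging the gap between the range $(-\infty,\til T)$ of $t_0$ in the statement and the range $(-\infty,T\wedge\til T)$ allowed by Lemma \ref{Z-ergodic} — all of which dissolve using the $t_0$-independence of the limit.
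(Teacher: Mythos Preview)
Your proposal is correct and follows essentially the same route as the paper: apply the ergodic theorem (\ref{time-average}) to $Z$, use the deterministic comparison (\ref{derivative-W}), Lemma \ref{v'}, and Lemma \ref{W-K-est} to obtain (\ref{Z-Z}), and then invoke Lemma \ref{Z-ergodic} to transfer the time-average limit from $Z$ to $\til Z=2\pi h$. Your extra care about the $t_0$-independence, the uniform-continuity extension of $f$, and the pathwise use of the random $W$ just makes explicit what the paper leaves implicit.
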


Combining the above proposition with Theorems \ref{Thm2} and \ref{Thm-R}, we obtain the following theorem.

\begin{Theorem}
  Let $\kappa\in(0,4)$ and $t_0\in(0,\infty)$. Let $\beta$ be a chordal or radial SLE$_\kappa$ trace. For $0\le t< t_0$, let $v(t)=\ccap(\beta([t,t_0]))$ and $h(t)$ be the harmonic measure of the left side of $\beta([t,t_0])$ in $\ha\C\sem \beta([t,t_0])$ viewed from $\infty$. Then for any $f\in C([0,2\pi])$, almost surely
  $$\lim_{t\to t_0^-} \frac{1}{v(t)-v(0)} \int_0^t f(h(s))dv(s) =\frac{\int_0^{2\pi}f(x)\sin_2(x)^{\frac8\kappa+2}dx }{\int_0^{2\pi}\sin_2(x)^{\frac 8\kappa+2}dx }.$$ \label{chordal-radial-ergodicity}
\end{Theorem}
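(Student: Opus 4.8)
The plan is to reduce the statement to Proposition~\ref{prop3} applied to a whole-plane SLE$(\kappa;\kappa+2)$ process, via Theorems~\ref{Thm2} and~\ref{Thm-R}. First, the parameters match: for $\rho=\kappa+2$ one has $\frac4\kappa(\frac\rho2+1)=\frac8\kappa+2$, so $\mu_{\kappa;\kappa+2}$ has density proportional to $\sin_2(x)^{\frac8\kappa+2}$, which is exactly the measure on the right-hand side of the theorem; moreover $\kappa\in(0,4)$ and $\kappa+2\ge\frac\kappa2-2$, so Proposition~\ref{prop3} applies. Since $\sin_2(2\pi-x)=\sin_2(x)$, the measure $\mu_{\kappa;\kappa+2}$ is invariant under $x\mapsto2\pi-x$, which will let us ignore the distinction between the left and right sides of the curve.

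Next I would introduce $\til\gamma$, the curve obtained from $\beta|_{[0,t_0]}$ by reversing time, translating the tip $\beta(t_0)$ to $0$, and then reparametrizing by whole-plane capacity: thus $\til\gamma(-\infty)=0$, $\ccap(\til\gamma([-\infty,r]))=r$, and $\til\gamma$ traces the set $\beta([0,t_0])-\beta(t_0)$ starting from $0$. Since whole-plane capacity and the harmonic measure of the two sides are translation invariant, one has $\til\gamma([-\infty,v(s)])=\beta([s,t_0])-\beta(t_0)$ for $0\le s<t_0$, with $v$ as in the theorem, and the harmonic measure from $\infty$ of a side of $\til\gamma([-\infty,v(s)])$ coincides, up to the factor $2\pi$ and a possible left-right reflection, with the theorem's $h(s)$. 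Now Theorems~\ref{Thm2} and~\ref{Thm-R} say precisely that $\til\gamma$ is a reparametrization by whole-plane capacity of $W(\gamma^*)$, where $\gamma^*$ is a whole-plane SLE$(\kappa;\kappa+2)$ trace and $W$ is a random conformal map with $W(0)=0$ (namely $W=V^{-1}$ post-composed with the translation carrying $V^{-1}(0)=\beta(t_0)$ to $0$, where $V$ is the map supplied by those theorems) --- this being a deterministic relation in a coupling in the chordal case, and holding up to absolute continuity of laws in the radial case. So $\til\gamma$ is precisely the curve ``$\til\gamma$'' of Proposition~\ref{prop3}; one takes $\Omega$ to be the domain of $W$, and lets $T$ approach the top of the relevant parameter range, so that the proposition's parameter $\til T$ increases to $\ccap(\beta([0,t_0]))$, which is $v(0)$ in the notation of the theorem.

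Finally I would translate the time-average. Since $\kappa<4$, $\beta$ is simple, so $s\mapsto v(s)=\ccap(\beta([s,t_0]))$ is continuous and strictly decreasing on $[0,t_0)$ with $v(s)\to-\infty$ as $s\to t_0^-$, and the change of variables $r=v(s)$ gives
\[\frac1{v(t)-v(0)}\int_0^t f(h(s))\,dv(s)=\frac1{v(0)-v(t)}\int_{v(t)}^{v(0)}f\bigl(h(v^{-1}(r))\bigr)\,dr.\]
Proposition~\ref{prop3}, applied to the continuous function $x\mapsto f(2\pi-x)$ (to absorb the possible reflection) together with the $x\mapsto2\pi-x$ symmetry of $\mu_{\kappa;\kappa+2}$, gives, almost surely, $\frac1{b-a}\int_a^bf(h(v^{-1}(r)))\,dr\to\int_0^{2\pi}f\,d\mu_{\kappa;\kappa+2}$ as $a\to-\infty$ for any fixed $b<v(0)$; splitting the integral at $b$ and bounding the tail over $[b,v(0)]$ by $\|f\|_\infty(v(0)-b)/(v(0)-a)\to0$ promotes this to the limit with $a=v(t)\to-\infty$ and exact endpoint $v(0)$, which is the assertion of the theorem (with $\int_0^{2\pi}f\,d\mu_{\kappa;\kappa+2}$ written as the stated ratio). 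In the radial case the event in question is measurable with respect to $\til\gamma$ (equivalently, with respect to $\beta|_{[0,t_0]}$), so, being almost sure under whole-plane SLE$(\kappa;\kappa+2)$, it remains almost sure under the absolutely continuous law coming from $\beta$.

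The substance is carried by Proposition~\ref{prop3} and Theorems~\ref{Thm2}--\ref{Thm-R}; what remains is bookkeeping, and the only points that need genuine care are identifying the theorem's $v$ and $h(s)$ with the whole-plane-capacity parameter and the side-measure of $\til\gamma$ in Proposition~\ref{prop3} (through the time-reversal, translation, reparametrization, and side reflection), the endpoint passage $\til T\uparrow v(0)$, and the absolute-continuity step in the radial case. I do not expect a serious obstacle beyond getting these identifications right.
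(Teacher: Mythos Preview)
Your proposal is correct and follows exactly the route the paper intends: the paper's own proof is the single sentence ``Combining the above proposition with Theorems~\ref{Thm2} and~\ref{Thm-R}, we obtain the following theorem,'' and you have accurately unpacked that combination, including the change of variables $r=v(s)$, the endpoint passage $b\uparrow v(0)$, and the absolute-continuity transfer in the radial case. The left/right bookkeeping in fact lines up without any reflection (reversing the direction of traversal already swaps sides, so the right side of $\til\gamma$ is the left side of $\beta$), but your symmetry argument makes this harmless either way.
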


\no{\bf Remarks.} 
\begin{enumerate}
  \item We can now conclude that Theorem \ref{reversibility-whole} does not hold with $\kappa+2$ replaced by any other $\rho\ge \frac\kappa 2-2$. If this is not true, then Theorem \ref{Thm-R} also holds with $\kappa+2$ replaced by such $\rho$. Then Theorem \ref{chordal-radial-ergodicity} holds in the radial case with the exponent $\frac8\kappa+2$ replaced by $\frac 4\kappa(\frac{\rho} 2+1)$, which is obviously impossible.
\item Fubini's Theorem implies that Theorem \ref{chordal-radial-ergodicity} still holds if the deterministic number $t_0$ is replaced a positive random number $\lin t_0$, whose distribution given $\beta$ is absolutely continuous with respect to the Lebesgue measure. 
    We do not expect that the theorem holds if the conditional distribution of $\lin t_0$ does not have a density. In fact, if the conditional distribution of $\lin t_0$ is absolutely continuous with respect to the natural parametrization introduced by Lawler and Sheffield \cite{NP}, then we expect that $\beta$ behaves like a two-sided radial SLE$_\kappa$ process, which is a radial SLE$(\kappa;2)$ process, near $\beta(\lin t_0)$, and  Theorem \ref{chordal-radial-ergodicity} is expected to hold with $\frac 8\kappa+2$ replaced by $\frac 8\kappa$.
\end{enumerate}

Let $\kappa\in(0,4]$. A whole-plane SLE$(\kappa;\rho)$ trace $\gamma$ generates a simple curve. Combining the reversibility property derived in \cite{whole} with the Markov-type relation between whole-plane SLE$_\kappa$ and radial SLE$_\kappa$ processes, we see that, if $\beta$ is a radial SLE$_\kappa$, there is a conformal map $V$ defined on $\D$ with $V(0)=0$, which maps $\beta$ to an initial segment of a whole-plane SLE$_\kappa$ trace. Applying Proposition \ref{prop3}, we obtain the following.

\begin{Theorem}
  Let $\kappa\in(0,4]$. Let $\beta$ be a  radial SLE$_\kappa$ trace. For $0\le t<\infty$, let $v(t)=\ccap(\beta([t,\infty]))$ and $h(t)$ be the harmonic measure of the left side of $\beta([t,\infty])$ in $\ha\C\sem \beta([t,\infty])$ viewed from $\infty$. Then for any $f\in C([0,2\pi])$, almost surely
  $$\lim_{t\to \infty} \frac{1}{v(t)-v(0)} \int_0^t f(h(s))dv(s) =\frac{\int_0^{2\pi}f(x)\sin_2(x)^{\frac 4\kappa}dx }{\int_0^{2\pi}\sin_2(x)^{\frac 4\kappa}dx }.$$
\end{Theorem}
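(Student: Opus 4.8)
The plan is to reduce this statement to Proposition \ref{prop3} by exhibiting a conformal map that sends the radial SLE$_\kappa$ trace $\beta$ (for $\kappa\in(0,4]$) to an initial segment of a whole-plane SLE$_\kappa$ trace, i.e.\ a whole-plane SLE$(\kappa;0)$ trace. Once such a map is in hand, we are in exactly the setting of Proposition \ref{prop3} with $\rho=0$: the exponent there is $\frac 4\kappa(\frac\rho2+1)=\frac 4\kappa$, which matches the right-hand side in the statement, and the left side of $\beta([t,\infty])$ corresponds to the right side of the image curve after applying the orientation-reversing reflection/reparametrization (this sign bookkeeping is the only subtlety on the geometric side, and it is the same bookkeeping already carried out in the proof of Theorem \ref{chordal-radial-ergodicity}).

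First I would invoke the reversibility of the whole-plane SLE$_\kappa$ trace established in \cite{whole}, combined with the Markov-type relation between whole-plane SLE$_\kappa$ and radial SLE$_\kappa$ processes (the relation stated in Section \ref{Section-kappa-rho'}, in the special case $\rho=0$): if $\gamma$ is a whole-plane SLE$_\kappa$ trace and $t_0\in\R$, then $I_\TT\circ g_{t_0}(K_{t_0+t}\sem K_{t_0})$, $t\ge 0$, is generated by a stationary radial SLE$_\kappa$ process. Reversibility lets us reverse the roles of the two ends of $\gamma$, so that the \emph{tip} segment of $\gamma$ at a fixed whole-plane capacity time, viewed from $0$, is (after a conformal map fixing $0$ and an orientation-reversing time change) distributed as an \emph{initial} segment of a whole-plane SLE$_\kappa$ trace. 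Composing with the reflection and using rotation symmetry to normalize the starting point, this produces a random conformal map $V$ on $\D$ with $V(0)=0$ such that $V(\beta)$ is an initial segment of a whole-plane SLE$_\kappa$ trace, up to a time change.

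Next I would set $\Omega=V(\D)$ (a random domain containing $0$) and apply Proposition \ref{prop3} with $W=V^{-1}$, $\rho=0$, and $\gamma$ the whole-plane SLE$_\kappa$ trace whose initial segment is $V(\beta)$. The proposition gives, for $f\in C([0,2\pi])$, the almost sure limit of the time-average of $f(2\pi h(s))$ along the capacity parametrization of $W(\gamma)=\beta$ (up to reparametrization), equal to $\int_0^{2\pi} f(x)\,d\mu_{\kappa;0}(x)=\frac{\int_0^{2\pi} f(x)\sin_2(x)^{4/\kappa}dx}{\int_0^{2\pi}\sin_2(x)^{4/\kappa}dx}$. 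It then remains to translate the parametrization: $v(t)=\ccap(\beta([t,\infty]))$ is (up to an additive constant coming from $\log|V'(0)|$, which drops out of the normalized average exactly as in \eqref{Z-Z}) the whole-plane capacity parametrization of the reversed curve, $dv(s)$ is the correct measure, and $h(s)$ as defined via the harmonic measure of the \emph{left} side of $\beta([s,\infty])$ matches $1-2\pi h$ of Proposition \ref{prop3} under the orientation reversal — and since $\sin_2$ is symmetric under $x\mapsto 2\pi-x$, the invariant density $\sin_2(x)^{4/\kappa}$ is unchanged, so no correction is needed in the final formula. Assembling these identifications yields the claimed limit.

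The main obstacle is the first step: producing the conformal map $V$ rigorously, which requires both the whole-plane reversibility result of \cite{whole} (used as a black box) and a careful limiting/Markov argument to pass from a finite tip segment to a genuine initial segment of an \emph{infinite} whole-plane trace, together with conformal removability to patch the map across the curve — exactly the circle of ideas developed in Sections \ref{normalized}--\ref{image}, but now applied with $\rho=0$ rather than $\rho=\kappa+2$. The remaining work — verifying the hypotheses of Proposition \ref{prop3} for this $W$, and the orientation/parametrization bookkeeping — is routine given \eqref{Z-Z} and Lemma \ref{Z-ergodic}.
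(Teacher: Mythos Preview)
Your proposal is correct and follows essentially the same route as the paper: use the reversibility of whole-plane SLE$_\kappa$ from \cite{whole} together with the Markov relation between whole-plane and radial SLE$_\kappa$ to produce a conformal map $V$ on $\D$ with $V(0)=0$ sending $\beta$ to an initial segment of a whole-plane SLE$_\kappa$ trace, and then apply Proposition~\ref{prop3} with $\rho=0$.

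Your last paragraph, however, overcomplicates the first step. No conformal removability and no rerunning of the machinery of Sections~\ref{normalized}--\ref{image} with $\rho=0$ is needed here. The Markov relation already identifies (a rotation of) $\beta$ with the image under $I_\TT\circ g_{t_0}$ of the \emph{entire} tail $\gamma([t_0,\infty))$ of a whole-plane SLE$_\kappa$ trace, and the reversibility result of \cite{whole}, taken as a black box, converts that tail via a single M\"obius map into an initial segment of another whole-plane SLE$_\kappa$ trace. The map $V$ is just the composition of these explicit conformal maps, defined globally on $\D$ from the outset, so there is nothing to patch across the curve.
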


\appendixpage
\addappheadtotoc
\appendix
\section{Carath\'eodory Convergence} \label{A}
\begin{Definition}  Let $(D_n)_{n=1}^\infty$ and $D$ be domains in a Rieman surface $R$. We say that $(D_n)$ converges to $D$ in the Carath\'eodory topology, and write $D_n\dto D$, if 
\begin{enumerate}
  \item [(i)] for every compact set $K\subset D$, there exists $n_0\in\N$ such that $K\subset D_n$ if $n\ge n_0$;
\item [(ii)] for every point $z_0\in\pa D$, there exists $z_n\in\pa D_n$ for each $n$ such that $z_n\to z_0$.
\end{enumerate} \label{def-lim}
\end{Definition}

\no {\bf Remark.}
 A sequence of domains may converge to two different domains. For example, let $D_n=\C\sem((-\infty,n])$. Then $D_n\dto\HH$, and
$D_n\dto -\HH$ as well. But two different limit domains of the same domain sequence must be disjoint from each other, because if they
have nonempty intersection, then one contains some boundary point of the other, which implies a contradiction.


\begin{Lemma} Let $R$ and $S$ be two Riemann surfaces. Let $D_n$, $n\in\N$, and $D$ be domains in $R$ such that $D_n\dto D$. Let $f_n$ map $D_n$ conformally into $S$, $n\in\N$. Suppose $(f_n)$ converges locally uniformly in $D$.  Assume that the limit function $f$ is not constant in $D$. Then $f$ is a conformal map, $f(D_n)\dto f(D)$, and $f_n^{-1}\luto f^{-1}$ in $f(D)$. \label{domain convergence*}
\end{Lemma}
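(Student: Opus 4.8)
The plan is to verify the three conclusions in turn, the last two being intertwined. First I would note that $f$, being a locally uniform limit of holomorphic maps, is holomorphic, and is an open map since it is non-constant; hence $f(D)$ is a domain. Injectivity of $f$ follows from Hurwitz's theorem read in local charts: if $f(p)=f(q)=w_0$ with $p\neq q$ in $D$, pick disjoint parametric disks $U\ni p$, $U'\ni q$ with closures in $D$ on which $f$ attains the value $w_0$ only at $p$, resp.\ only at $q$; uniform convergence then forces $f_n$ to attain $w_0$ in both $U$ and $U'$ for large $n$, contradicting injectivity of $f_n$. Thus $f:D\conf f(D)$.

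For condition (i) of $f_n(D_n)\dto f(D)$ together with $f_n^{-1}\luto f^{-1}$ I would run a single argument-principle estimate. Fix a compact $K\subset f(D)$ and a compact $L$ with $f^{-1}(K)\subset\mathrm{int}\,L\subset L\subset D$. For $n$ large one has $L\subset D_n$ (condition (i) of $D_n\dto D$) and $f_n\to f$ uniformly on $L$; since $f$ is injective, $f(\partial L)$ is disjoint from $K$, so there is a uniform gap, and the Riemann-surface form of the argument principle (applied in coordinate charts covering $K$) shows that for $n$ large and every $w\in K$ the equation $f_n(z)=w$ has in $\mathrm{int}\,L$ exactly as many solutions as $f(z)=w$, namely one. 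By injectivity of $f_n$ this solution is $f_n^{-1}(w)$, whence $K\subset f_n(D_n)$ (condition (i)) and $f_n^{-1}(K)\subset L$ for $n$ large. If $f_n^{-1}$ did not converge uniformly to $f^{-1}$ on $K$, one could extract $w_n\in K$ with $w_n\to w_\infty$ and $f_n^{-1}(w_n)\to q\in L\subset D$; but $f_n(f_n^{-1}(w_n))=w_n\to w_\infty$ while $f_n\to f$ near $q$ gives $f(q)=w_\infty=f(f^{-1}(w_\infty))$, contradicting injectivity of $f$.

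The core is condition (ii), and this is the only place where $D_n\dto D$ is genuinely used. Suppose it fails at some $w_0\in\partial f(D)$; passing to a subsequence there is a parametric disk $B=B(w_0,\varepsilon)$ with $B\cap\partial f_n(D_n)=\emptyset$ for all $n$. By connectedness either $B\subset f_n(D_n)$ or $B\cap\overline{f_n(D_n)}=\emptyset$; the latter is impossible for large $n$, since $w_0\in\partial f(D)$ yields $p\in D$ with $f(p)\in B(w_0,\varepsilon/2)$, so $f_n(p)\in B\cap f_n(D_n)$ eventually. Thus $B\subset f_n(D_n)$ for large $n$ and $g_n:=f_n^{-1}|_B$ is a sequence of conformal maps of $B$ into $D_n$. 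Choosing $w_*\in f(D)$ with $d(w_*,w_0)$ small and applying the previous step with $K=\overline V$ for a small closed parametric disk $\overline V\ni w_*$ inside $B\cap f(D)$, one gets $g_n\to f^{-1}$ uniformly on $\overline V$ and $g_n(\overline V)$ inside a fixed compact subset of $D$; in particular $g_n'$ stays bounded at a point of $V$. Since the $g_n$ are univalent, Koebe's distortion theorem (after an affine normalization in the planar case, which covers all our applications; in general after lifting to the universal cover of $R$) makes $(g_n)$ a normal family on a concentric disk $B_0\Subset B$. Pass to a subsequence with $g_n\luto g$ in $B_0$; then $g$ is non-constant (it equals $f^{-1}$ on $V\cap B_0$), hence conformal. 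Put $q_0:=g(w_0)=\lim g_n(w_0)$. If $q_0\in D$, then $f_n\to f$ uniformly near $q_0$, so $w_0=f_n(g_n(w_0))\to f(q_0)$, giving $f(q_0)=w_0$ and $w_0\in f(D)$ — contradicting $w_0\in\partial f(D)$.

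It remains to rule out $q_0\notin D$, and here condition (ii) of $D_n\dto D$ enters. Applying the argument-principle estimate once more, now to $g_n\luto g$ on $B_0$: the fixed open set $\Theta:=g(B_1)$, with $B_1\Subset B_0$ a concentric disk containing $w_0$, satisfies $\overline\Theta\subset g_n(B_0)\subset D_n$ for all large $n$. But $\Theta$ is connected, contains $q_0\in R\setminus D$, and contains $g(V\cap B_1)\subset D$ (nonempty if $w_*$ was taken close enough to $w_0$), so it meets $\partial D$ at some $\zeta^*$. Now condition (ii) of $D_n\dto D$ at $\zeta^*$ produces $\zeta_n^*\in\partial D_n$ with $\zeta_n^*\to\zeta^*$; since $\Theta$ is open and $\zeta^*\in\Theta$, we get $\zeta_n^*\in\Theta\subset D_n$ for large $n$, contradicting $\zeta_n^*\in\partial D_n$. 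This finishes condition (ii), and hence the lemma. The main obstacle — the part needing care — is precisely this last step: one must (a) prevent the inverse maps $g_n$ from degenerating on a neighborhood of $w_0$, which is exactly Koebe/Montel, and (b) dispose of the possibility that their limit pushes $w_0$ out of $D$, which is impossible only because $D_n\dto D$ forces boundary points of $D$ to be approximated by boundary points of $D_n$. Everything else is routine bookkeeping with the argument principle and the uniform convergence $f_n\to f$.
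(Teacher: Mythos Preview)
Your proof is correct and follows the same broad architecture as the paper's: Hurwitz/Rouch\'e for injectivity of $f$, an argument-principle computation for condition~(i) and the locally uniform convergence of the inverses, and a contradiction argument for condition~(ii) that ultimately appeals to condition~(ii) of $D_n\dto D$ at a point of $\partial D$ produced as a limit of $f_n^{-1}(w_0)$.

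The one genuine difference is in how normality of the inverse maps near $w_0$ is obtained in the proof of condition~(ii). You apply Koebe's distortion theorem directly to $g_n=f_n^{-1}|_B$, using that $g_n'(w_*)$ and $g_n(w_*)$ converge at a nearby point $w_*\in f(D)$; this gives a normal family on a disk $B_0$ about $w_0$ without further work. The paper instead first establishes, via a harmonic-measure estimate, a uniform lower bound $r_1\ge r_0$ on $\dist(w_1,\partial\Omega_1)$, where $\Omega_1=f_m\bigl(g_m(w_1+\D)\cap g_n(w_2+\D)\bigr)$; this yields a fixed disk $U_0$ on which the composites $f_{n_1}\circ g_n$ are all defined, and only then is Koebe invoked. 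Your route is noticeably shorter and bypasses the harmonic-measure step entirely; the paper's route has the minor advantage that by passing through $f_{n_1}$ it keeps the Koebe estimate inside the coordinate disk $2\D\subset S$, whereas your direct use of Koebe on $g_n$ (target $R$) needs the lifting-to-universal-cover remark you made---but this is a point both proofs must address for general Riemann surfaces, and both are perfectly rigorous in the planar case that the paper actually uses. Your case split on $q_0\in D$ versus $q_0\notin D$ is also slightly different from the paper, which observes directly that $g(w_0)\in\partial D$; either way the contradiction with condition~(ii) of $D_n\dto D$ is identical in spirit.
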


\no{\bf Remark.} The lemma improves the Carath\'eodory kernel theorem (Theorem 1.8, \cite{Pom-bond}) such that the domains do not have to be simply connected. A simpler version (in the case $R$ and $S$ are $\C$ or $\ha\C$) was introduced in \cite{LERW}, and used in the author's other papers, but no proof has been given so far. For completeness, we include the proof here.

\begin{proof}
Cauchy-Goursat theorem implies that $f$ is analytic. We first prove that $f$ is one-to-one. Assume that $f$ is not one-to-one. Then there exist $z_1\ne z_2\in D$ such that $f(z_1)=f(z_2):=w_0$. Since $f$ is not constant, $f^{-1}(w_0)$ has no accumulation points in the domain $D$.
Let $(V,\psi)$ be a chart for $S$ such that $w_0\in V$ and $\psi(w_0)=0$. We may find charts $(U_1,\phi_1)$ and $(U_2,\phi_2)$ for $R$ such that $z_j\in U_j\subset D$, $f(U_j)\subset V$, $\phi_j(z_j)=0$, $\phi_j(U_j)\supset \lin\D$, $\phi_j^{-1}(\TT)\cap f^{-1}(w_0)=\emptyset$, $j=1,2$, and $U_1\cap U_2=\emptyset$. Since $D_n\dto D$, we have $\phi_j^{-1}(\lin\D)\subset D_n$, $j=1,2$, if $n$ is big enough. Thus, for $j=1,2$, $\psi\circ f_n\circ \phi_j^{-1}$ tends uniformly on $\lin\D$ to $\psi\circ f\circ \phi_j^{-1}$, which has a zero at $0$ and has no zero on $\TT$. Rouch\'e's theorem implies that when $n$ is big enough, $\psi\circ f_n\circ \phi_j^{-1}$ has zero(s) in $\D$ for $j=1,2$, which implies that $f_n^{-1}(w_0)$ intersects both $U_1$ and $U_2$. This contradicts that each $f_n$ is one-to-one, and $U_1\cap U_2=\emptyset$. So $f$ is one-to-one.

Let $E_n=f(D_n)$, $n\in\N$, and $E=f(D)$ be domains in $S$. Since $f_n\luto f$ in $D$, we have $f_n\circ f^{-1}\luto \id$ in $E$. Let $K\subset E$ be a closed ball, which means that there is a chart $(V,\psi)$ for $S$ such that $K\subset V\subset E$ and $\psi(K)=\{|z|\le r_0\}$ for some $r_0>0$. We may choose $r_1>r_0$ such that $\psi(V)\supset \{|z|\le r_1\}$. Let $K'=\psi^{-1}(\{|z|\le r_1\})$. Applying Rouch\'e's theorem to the Jordan curve $\{|z|=r_1\}$ and the functions $\psi\circ f_n\circ f^{-1}\circ \psi^{-1}(z)-z_0$ and $z-z_0$, where $z_0\in\{|z|\le r_0\}$, we see that when $n$ is big enough, $\psi\circ f_n\circ f^{-1}\circ \psi^{-1}(z)-z_0$ has a zero in $\{|z|<z_1\}$ for every $z_0\in\{|z|\le r_0\}$, which implies that $K=\psi^{-1}(\{|z|\le r_0\})\subset f_n(D_n)=E_n$. Since every compact subset of $E$ can be covered by finitely many closed balls in $E$, condition (i) in Definition \ref{def-lim} holds for $E_n$ and $E$.

Let $g_n=f_n^{-1}$, $n\in\N$, and $g=f^{-1}$.
Now we prove that $g_n\luto g$ in $E$. Assume that this is not true. By passing to a subsequence, we may find a sequence $(w_n)$ in $E$ with $w_n\to w_0\in E$ such that $g(w_0)$ is not any subsequential limit of $(g_n(w_n))$. Let $(V,\psi)$ be a chart for $S$ such that $w_0\in V\subset E$ and $\psi(w_0)=0$. Let $r_1>0$ be such that $\{|z|\le r_1\}\subset \psi(V)$; and let $r_0\in(0,r_1)$. Since $w_n\to w_0$, there is $n_0\in\N$ such that $\psi(w_{n})\in\{|z|\le r_0\}$ for $n\ge n_0$. The argument in the previous paragraph shows that, there is $n_1\in\N$ such that, if $n\ge n_1$, then for every $z\in\{|z|\le r_0\}$, there is $z'\in\{|z|<r_1\}$ such that $\psi\circ f_n\circ g\circ \psi^{-1}(z')=z$. Taking $z=\psi(w_n)$, we see that $g_n(w_n)\in g\circ \psi^{-1} (\{|z|< r_1\})$  for $n\ge n_0\vee n_1$. Since $r_1>0$ can be chosen arbitrarily small and $\psi^{-1}(0)=w_0$, this contradicts that $g(w_0)$ is not any subsequential limit of $(g_n(w_n))$. Thus, $g_n\luto g$ in $E$.

It remains to prove that condition (ii) in Definition \ref{def-lim} holds for $E_n$ and $E$. Assume that this is not true. By passing to a subsequence, we may assume that there exist $w_0\in\pa E$ and a domain $V$ with $w_0\in V\subset S$ such that $V\cap\pa E_n=\emptyset$ for each $n$. Let $w_0'\in E\cap V$. Since condition (ii) in Definition \ref{def-lim} holds for $E_n$ and $E$, if $n$ is big enough, then $w_0'\in E_n$, which implies that $V\subset E_n$ because $V\cap\pa E_n=\emptyset$ and $V$ is connected. By removing finitely many terms, we may assume that $V\subset E_n$ for each $n$. By considering a smaller $V$, we may further assume that there is $\psi:V\conf 2\D$ such that $\psi(w_0)=0$. We will restrict our attention to $V$ and derive a contradiction. So we may assume that $V=2\D$, $\psi=\id$, and $w_0=0$.

It is well known that there is an increasing function $h(r)$ defined on $(0,1)$ with $h(0^+)=0$ such that the probability that a planar Brownian motion started from $0$ hits $\TT$ before disconnecting $r\TT$ from $\TT$ is less than $h(r)$. Pick $r_0\in(0,1/5)$ such that $h(r_0)+h(5r_0)<1$.

Since $w_0=0\in\pa E$, may find $w_1\in E\cap V$ such that $|w_1|<0.1\wedge r_0$. Let $s\in (0,0.1)$ be such that $U_2:=\{|w-w_1|<s\}\subset E$. Let $U_1=\{|w-w_1|< s/2\}$. Since $g_n\luto g$ in $U_2$, from what we have derived, condition (i) in Definition \ref{def-lim} holds for $g_n(U_2)$ and $g(U_2)$. Thus, there is $n_0\in\N$ such that $g_n(w_1)\in g(U_1)\subset g(\lin{U_1})\subset g_n(U_2)$ when $n\ge n_0$. This implies that, if $n,m\ge n_0$, then $f_n\circ g_m(w_1)\in U_2$, i.e., $|f_n\circ g_m(w_1)-w_1|<s<0.1$, and so $|f_n\circ g_m(w_1)|<0.2$.

That $g_n\luto g$ in $E$ also implies that $g_n'(w_1)\to g'(w_1)\in\C\sem\{0\}$. So there is $n_1\ge n_0$ such that, if $n,m\ge n_1$ then $|(f_n\circ g_m)'(w_1)|\in(0.9,1.1)$. Fix $n,m\ge n_1$. Let $W=f_n\circ g_m$ and $w_2=W(w_1)$. Recall that $|w_1|<0.1$ and $|w_2|<0.2$. So $w_1+\D$ and $w_2+\D$ are contained in $2\D=V\subset E_n\cap E_m$. Let $\Omega_1=f_m(g_m(w_1+\D)\cap g_n(w_2+\D))\subset w_1+\D$ and $\Omega_2=f_n(g_m(w_1+\D)\cap g_n(w_2+\D))\subset w_2+\D$. Then $w_j\in\Omega_j$, $j=1,2$, and $W:\Omega_1\conf \Omega_2$.

Let $r_j=\dist(w_j,\pa\Omega_j)$. Since $|W'(w_1)|\in (0.9,1.1)$, Koebe's $1/4$ theorem implies that $r_2<4.4 r_1$. Let $I_1= W^{-1}(w_2+\TT)\cap(w_1+\D)$ and $I_2=(w_1+\TT)\cap W^{-1}(w_2+\D)$. Then $I_1$ and $I_2$ are disjoint subsets of $\pa\Omega_1$. For $k=1,2$, let $h_k$ be the harmonic measure of $I_k$ in $\Omega_1$ viewed from $w_1$. Then $h_1+h_2\le 1$. Note that $\pa\Omega_1\sem I_1\subset\TT$, and $I_1$ contains a connected component, which touches both $w_1+\TT$ and $w_1+r_1\TT$. So $h_1\ge 1-h(r_1)$. Let $I_2'=W(I_2)=W(w_1+\TT)\cap (w_2+\D)\subset\pa \Omega_2$. Then $\pa\Omega_2\sem I_2'\subset\TT$, and $I_2'$ contains a connected component, which touches both $w_2+\TT$ and $w_2+r_2\TT$. From conformal invariance of harmonic measures, $h_2$ is equal to the harmonic measure of $I_2'$ in $\Omega_2$ viewed from $w_2$, which is at least $1-h(r_2)$. Thus, we have $1\ge h_1+h_2\ge (1-h(r_1))+(1-h(r_2))$, from which follows that $1\le h(r_1)+h(r_2)$. If $r_1<r_0$, since $h$ is increasing and $r_2<4.4 r_1$, we get $h(r_1)+h(r_2)\le h(r_0)+h(5r_0)<1$, which is a contradiction. So $r_1\ge r_0$.

So we conclude that, for any $n,m\ge n_1$, $f_m\circ g_n$ is well defined and analytic on $U_0:=\{|w-w_1|<r_0\}$. Fix $m=n_1$. Since $f_{n_1}\circ g_{n}(w_1)\to f_{n_1}\circ g(w_1)$ and $(f_{n_1}\circ g_{n})'(w_1)\to (f_{n_1}\circ g)'(w_1)$, Koebe's distortion theorem implies that $(f_{n_1}\circ g_{n}|_{U_0})_{n\ge n_1}$ is a normal family. Since $f_{n_1}\circ g_{n}\luto f_{n_1}\circ g$ in $E \cap U_0$, we see that $f_{n_1}\circ g_{n}$ converges locally uniformly in $U_0$, as $n\to\infty$, and the limit is an analytic extension of $f_{n_1}\circ g$ from $E\cap U_0$ to $U_0$. Thus, $g$ extends analytically to $E':=E\cup U_0$, and $g_n\luto  g$ in $E'$. Since $|w_1|<r_0$, we have $w_0=0\in U_0\cap\pa E$. Thus, $z_0:=g(w_0)\in\pa D$. Let $K$ be a compact subset of $U_0$, whose interior $\mathring K$ contains $w_0$. Since $g_n\luto g$ in $U_0$, from what we have derived, condition (i) in Definition \ref{def-lim} holds for $g_n(U_0)$ and $g(U_0)$. Thus, $z_0\in g(\mathring K)\subset g(K)\subset g_n(U_0)\subset D_n $ when $n$ is big enough, which contradicts that $z_0\in\pa D_n$ and $D_n\dto D$ as $g(\mathring K)$ is an open set. The contradiction completes the proof.
\end{proof}

\no{\bf Remark.} The only place that we use the connectedness is that $f$ is not constant implies $f^{-1}(w_0)$ has no accumulation points. Thus, we may define Carath\'eodory convergence of open sets in a Riemann surface. Lemma \ref{domain convergence*} still holds when $D_n$ and $D$ are not domains, if the condition that $f$ is not constant is replaced by that $f$ is not locally constant.


\section{Radial Bessel Processes} \label{B'}
Let $\delta\in\R$. Consider the SDE:
\BGE dX_t=dB_t+\frac{\delta-1}2\cot(X_t)dt,\quad X_0\in (0,\pi).\label{dXt'}\EDE
The solution is called a radial Bessel process of dimension $\delta$. The name comes from the fact that the process arises in the definition of radial SLE$(\kappa;\rho)$ processes, and $(X_t)$ behaves like a Bessel process of dimension $\delta$ when it is close to $0$ or $\pi$. Let $[0,T)$ denote the time interval for $(X_t)$. Define $h(x)=\int_{\pi/2}^x \sin(t)^{1-\delta}dt$, $0<x<\pi$. It\^o's formula (c.f.\ \cite{RY}) shows that $h(X_t)$, $0\le t<T$, is a local martingale. Note that $h((-1,1))=\R$ if $\delta\ge 2$; and is bounded if $\delta<2$. A simple argument shows that, if $\delta\ge 2$, then $T=\infty$; if $\delta< 2$, then $T<\infty$ and $\lim_{t\to T}X_t\in\{0,\pi\}$.
Let $Y_t=\cos(X_t)$, $0\le t<T$. It\^o's formula shows that
\BGE dY_t=-\sqrt{1-Y_t^2}dB(t)-\frac\delta2 Y_t dt, \quad 0\le t<T.\label{dYt2'}\EDE

Suppose $\delta\ge 2$. We will derive the transition densities of $(Y_t)$ and $(X_t)$.  Observe that if the process $(Y_t)$ has a smooth transition density $p(t,x,y)$, then it satisfies the Kolmogorov's backward equation:
\BGE \pa_t p =\frac{1-x^2}2 \pa_x^2 p  -\frac{\delta }2 x \pa_x p.\label{PDE'}\EDE
Below we will solve (\ref{PDE'}) using the eigenvalue method, and prove that some solution is the transition density of $(Y_t)$.

Let $\lambda\in\R$. Consider the ODE:
\BGE ({1-x^2}) p''(x)-{\delta x} p'(x)-2\lambda p(x)=0.\label{ODE'}\EDE
If $\lambda=\lambda_n=-\frac n2(n+\delta-1)$, $n\in\N\cup\{0\}$, the above equation has a solution, which is the Gegenbauer polynomial $C_n^{(\alpha)}(x)$ (c.f.\ \cite{orthogonal}) with degree $n$ and index $\alpha:=\frac{\delta}2-\frac 12$. Thus,
$p_n(t,x):=e^{-\frac n2(n+\delta-1)t} C_n^{(\frac{\delta}2-\frac 12)}(x)$, $n\in\N\cup\{0\}$, solve (\ref{PDE'}) for $t,x\in\R$.

The functions $C_n^{(\alpha)}(x)$, $n\in\N\cup\{0\}$ form a complete orthogonal system w.r.t.\ the inner product
$\langle f,g\rangle_{\alpha-\frac 12}:=\int_{-1}^1 (1-x^2)^{\alpha-\frac 12} f(x)g(x) dx$
such that $\langle C_n^{(\alpha)},C_m^{(\alpha)}\rangle_{\alpha-\frac 12}=0$ when $n\ne m$, and
\BGE \langle C_n^{(\alpha)},C_n^{(\alpha)}\rangle_{\alpha-\frac 12}=\frac{\pi \Gamma(2\alpha+n)}{2^{2\alpha-1}(\alpha+n)n!\Gamma(\alpha)^2} \sim n^{2\alpha-2}.\label{norm-n'}\EDE
Moreover,
\BGE \|C_n^{(\alpha)}\|_\infty:=\max_{-1\le x\le 1} |C_n^{(\alpha)}(x)|=\frac{\Gamma(n+2\alpha)}{n!\Gamma(2\alpha)}\sim   n^{2\alpha-1}.\label{supernorm0'}\EDE
For $t>0$, $x,y\in[-1,1]$, define
\BGE p^{(Y)}(t,x,y)= \sum_{n=0}^\infty \frac{(1-y^2)^{\frac\delta 2-1} C_n^{(\frac{\delta}2-\frac 12)}(x)C_n^{(\frac{\delta}2-\frac 12)}(y)}{\int_{-1}^1 (1-y^2)^{\frac{\delta}2-1} C_n^{(\frac{\delta}2-\frac 12)}(y)^2dy} \exp({-\frac n2(n+\delta-1)t}).\label{pY'}\EDE
From (\ref{norm-n'}) and (\ref{supernorm0'}) we see that the above series converges uniformly on $[-1,1]$.

\begin{Proposition}
If $\delta\ge 2$, the transition density for $(Y_t)$ is $p^{(Y)}(t,x,y)$ given by (\ref{pY'}), and the transition density for $(X_t)$ is $p^{(X)}(t,x,y)=p^{(Y)}(t,\cos x,\cos y)\sin y$.  \label{densityY'}
\end{Proposition}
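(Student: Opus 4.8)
The plan is to identify $p^{(Y)}(t,x,\cdot)$ with the law of $Y_t$ started from $x$ by testing against polynomials, exploiting that, by (\ref{ODE'}), the Gegenbauer polynomial $C_n^{(\alpha)}$ with $\alpha=\frac\delta2-\frac12$ is an eigenfunction of the generator $Lp=\frac{1-x^2}2p''-\frac\delta2 xp'$ of the diffusion (\ref{dYt2'}): indeed (\ref{ODE'}) with $\lambda=\lambda_n:=-\frac n2(n+\delta-1)$ reads exactly $LC_n^{(\alpha)}=\lambda_nC_n^{(\alpha)}$.

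First I would record the basic martingale identity. For $\delta\ge2$ one has $T=\infty$ and $Y_t=\cos X_t\in(-1,1)$ for all $t$, so $(Y_t)$ solves (\ref{dYt2'}) for all time and never touches $\pm1$. Since $d\langle Y\rangle_t=(1-Y_t^2)\,dt$, It\^o's formula gives $dC_n^{(\alpha)}(Y_t)=LC_n^{(\alpha)}(Y_t)\,dt+(\text{local mart.})=\lambda_nC_n^{(\alpha)}(Y_t)\,dt+(\text{local mart.})$, so $M^n_t:=e^{-\lambda_nt}C_n^{(\alpha)}(Y_t)$ is a local martingale. Because $C_n^{(\alpha)}$ and $(C_n^{(\alpha)})'$ are bounded on $[-1,1]$ and $e^{-\lambda_ns}$ is bounded on any $[0,t]$, $M^n$ is in fact a true martingale on $[0,t]$, whence
$$\EE\big[C_n^{(\alpha)}(Y_t)\mid Y_0=x\big]=e^{\lambda_nt}\,C_n^{(\alpha)}(x),\qquad x\in(-1,1),\ t\ge0,\ n\ge0.$$

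Next I would test against polynomials. Fix $t>0$ and $x\in(-1,1)$, and let $f$ be a polynomial of degree $N$, expanded as $f=\sum_{n=0}^N a_nC_n^{(\alpha)}$. By the displayed identity and linearity, $\EE[f(Y_t)\mid Y_0=x]=\sum_{n=0}^N a_ne^{\lambda_nt}C_n^{(\alpha)}(x)$. On the other hand, since $\delta\ge2$ the weight $(1-y^2)^{\delta/2-1}$ is bounded on $[-1,1]$, so by the estimates (\ref{norm-n'})--(\ref{supernorm0'}) the series (\ref{pY'}) converges uniformly in $y\in[-1,1]$; multiplying by the bounded function $f$ and integrating term by term, orthogonality of the $C_m^{(\alpha)}$ (w.r.t.\ $\langle\cdot,\cdot\rangle_{\alpha-1/2}$) together with $\deg f=N$ kills every term with $m>N$ and leaves $\int_{-1}^1p^{(Y)}(t,x,y)f(y)\,dy=\sum_{m=0}^N a_me^{\lambda_mt}C_m^{(\alpha)}(x)$. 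Hence $\EE[f(Y_t)\mid Y_0=x]=\int_{-1}^1p^{(Y)}(t,x,y)f(y)\,dy$ for every polynomial $f$. Since $y\mapsto p^{(Y)}(t,x,y)$ is continuous (a uniform limit of continuous functions), both sides are continuous linear functionals of $f\in C([-1,1])$, and by the Weierstrass approximation theorem the identity extends to all $f\in C([-1,1])$. Taking $f\ge0$ gives $p^{(Y)}(t,x,\cdot)\ge0$, and $f\equiv1=C_0^{(\alpha)}$ (with $\lambda_0=0$) gives total mass $1$; thus $p^{(Y)}(t,x,\cdot)$ is the transition density for $(Y_t)$.

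Finally, for $(X_t)$: since $\delta\ge2$, $X_0=x\in(0,\pi)$ corresponds to $Y_0=\cos x$ and $X_t=\arccos Y_t$, so for $g\in C([0,\pi])$,
$$\EE\big[g(X_t)\mid X_0=x\big]=\int_{-1}^1p^{(Y)}(t,\cos x,y)\,g(\arccos y)\,dy=\int_0^\pi p^{(Y)}(t,\cos x,\cos\eta)\,g(\eta)\,\sin\eta\,d\eta$$
after the substitution $y=\cos\eta$, $\eta\in(0,\pi)$; this yields $p^{(X)}(t,x,\eta)=p^{(Y)}(t,\cos x,\cos\eta)\sin\eta$. The one point needing genuine care is promoting the local martingale $M^n$ to a true martingale in the first step — this is exactly where $\delta\ge2$ is used, since it forces $(Y_t)$ to stay in the open interval $(-1,1)$ so that the bounded eigenfunctions $C_n^{(\alpha)}$ remain bounded along the path; the term-by-term integration and the density argument are then routine given the uniform convergence of (\ref{pY'}) already noted in the text.
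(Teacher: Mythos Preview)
Your proof is correct and follows essentially the same approach as the paper: both exploit that the Gegenbauer polynomials are eigenfunctions of the generator, build the corresponding martingales via It\^o's formula, and then test against arbitrary polynomials to identify the transition density. The only cosmetic difference is that the paper packages the martingale as $M(t)=f(t_0-t,Y_t)$ for a finite combination $f(t,x)=\sum a_n p_n(t,x)$ solving the backward equation, whereas you treat each eigenfunction separately via $M^n_t=e^{-\lambda_n t}C_n^{(\alpha)}(Y_t)$ and then sum; these are equivalent. Your added remarks (the explicit change of variables for $p^{(X)}$, the nonnegativity and total-mass checks) are fine but not strictly needed. One small comment: the role of $\delta\ge 2$ is simply that $T=\infty$, so the martingale identity holds at every fixed time $t$; the boundedness of $C_n^{(\alpha)}$ on $[-1,1]$ holds regardless and is what makes the local martingale a true one.
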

\begin{proof}
It suffices to derive the the transition density for $(Y_t)$.
Let $f(x)$ be a polynomial, and $a_n=\langle f, C_n^{(\frac{\delta}2-\frac 12)}\rangle_{\frac\delta2-1}/\langle C_n^{(\frac{\delta}2-\frac 12)}, C_n^{(\frac{\delta}2-\frac 12)}\rangle_{\frac\delta2-1}$, $n\in\N\cup\{0\}$. Then all but finitely many $a_n$'s are zero, and $f=\sum_{n=0}^\infty a_n C_n^{(\frac{\delta}2-\frac 12)}$. Define $ f(t,x)=\sum_{n=0}^\infty a_n p_n(t,x)$.
Then $f(t,x)$ solves (\ref{PDE'}) with $f(0,x)=f(x)$. Suppose $(Y_t)$ solves (\ref{dYt2'}) with initial value $x_0$.
Fix $t_0>0$. It\^o's formula together with the boundedness of $f(t,x)$ on $[0,t_0]\times[-1,1]$ shows that $M(t):=f(t_0-t,Y_t)$, $0\le t<t_0$, is a bounded martingale. Since $\lim_{t\to t_0} M(t)=f(Y_{t_0})$, the optional stopping theorem together with the definition of $p^{(Y)}(t,x,y)$ implies that
$$ \EE_{x_0}[f(Y_{t_0})]=M(0)=f(t_0,x_0)=\int_{-1}^1 f(y) p^{(Y)}(t_0,x_0,y)dy.$$ 
Since this holds for any polynomial $f$, the proof is finished.
\end{proof}

\begin{Corollary}
  Let $\delta\ge 2$. Then $(Y_t)$ has a unique stationary distribution, which has a density
  \BGE  p^{(Y)}(x)=\frac{(1-x^2)^{\frac\delta 2-1}}{\int_{-1}^1 (1-y^2)^{\frac\delta 2-1}dy},\quad x\in(-1,1);\EDE
  and $(X_t)$ has a unique stationary distribution, which has a density $p^{(X)}(x)=p^{(Y)}(\cos x)\sin x$, $x\in(-\pi,\pi)$. Moreover, the stationary processes $(Y_t)$ and $(X_t)$ are reversible. \label{Xtstationary'}
\end{Corollary}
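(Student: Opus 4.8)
The plan is to read the whole statement off the spectral formula (\ref{pY'}) for the transition density of $(Y_t)$, which is available because $\delta\ge 2$ (so in particular $T=\infty$ and the stationary process runs over all of $\R$). Write $\alpha=\frac\delta2-\frac12$ and $\ell_n=\frac n2(n+\delta-1)$, so $\ell_0=0$ and $\ell_n>0$ for $n\ge1$. Since $C_0^{(\alpha)}\equiv1$, the $n=0$ term of (\ref{pY'}) is exactly
\[
\frac{(1-y^2)^{\frac\delta2-1}}{\int_{-1}^1(1-u^2)^{\frac\delta2-1}\,du}=p^{(Y)}(y),
\]
which depends neither on $x$ nor on $t$. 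Hence (\ref{pY'}) reads
\[
p^{(Y)}(t,x,y)=p^{(Y)}(y)+(1-y^2)^{\frac\delta2-1}\sum_{n\ge1}\frac{C_n^{(\alpha)}(x)\,C_n^{(\alpha)}(y)}{\langle C_n^{(\alpha)},C_n^{(\alpha)}\rangle_{\frac\delta2-1}}\,e^{-\ell_n t}.
\]

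First I would check that $p^{(Y)}(x)\,dx$ is invariant. Multiplying the last identity by $p^{(Y)}(x)=(1-x^2)^{\frac\delta2-1}/\int_{-1}^1(1-u^2)^{\frac\delta2-1}du$ and integrating over $x\in[-1,1]$, each $n\ge1$ term vanishes since $\int_{-1}^1(1-x^2)^{\frac\delta2-1}C_n^{(\alpha)}(x)\,dx=\langle C_0^{(\alpha)},C_n^{(\alpha)}\rangle_{\frac\delta2-1}=0$, leaving $\int_{-1}^1 p^{(Y)}(x)\,p^{(Y)}(t,x,y)\,dx=p^{(Y)}(y)$; so $p^{(Y)}(x)\,dx$ is a stationary probability measure. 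Reversibility of the corresponding stationary process is then immediate from the symmetric form of (\ref{pY'}): multiplying through by $p^{(Y)}(x)$ gives
\[
p^{(Y)}(x)\,p^{(Y)}(t,x,y)=\frac{(1-x^2)^{\frac\delta2-1}(1-y^2)^{\frac\delta2-1}}{\int_{-1}^1(1-u^2)^{\frac\delta2-1}\,du}\sum_{n\ge0}\frac{C_n^{(\alpha)}(x)\,C_n^{(\alpha)}(y)}{\langle C_n^{(\alpha)},C_n^{(\alpha)}\rangle_{\frac\delta2-1}}\,e^{-\ell_n t},
\]
which is symmetric in $x$ and $y$; this is exactly the detailed balance relation $p^{(Y)}(x)p^{(Y)}(t,x,y)=p^{(Y)}(y)p^{(Y)}(t,y,x)$, hence the stationary $(Y_t)$ is reversible.

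For uniqueness I would extract a little ergodicity from the same formula. For fixed $t>0$ the series (\ref{pY'}) converges uniformly on $[-1,1]^2$: by (\ref{norm-n'})--(\ref{supernorm0'}) its general term is $O(n^{2\alpha}e^{-\ell_n t})$ and $\ell_n$ grows quadratically in $n$. In particular $p^{(Y)}(t,\cdot,\cdot)$ is continuous, and for $t\ge t_0>0$ the tail is bounded by $e^{-\ell_1(t-t_0)}\sum_{n\ge1}O(n^{2\alpha})e^{-\ell_n t_0}$, so $p^{(Y)}(t,x,y)\to p^{(Y)}(y)$ uniformly in $x$ as $t\to\infty$. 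Now if $\nu$ is any stationary probability measure of $(Y_t)$, then for each $t>0$ the law of $Y_t$ started from $\nu$ --- which is $\nu$ itself --- has Lebesgue density $y\mapsto\int_{-1}^1 p^{(Y)}(t,x,y)\,\nu(dx)$ by Proposition \ref{densityY'}; letting $t\to\infty$ and using the uniform convergence just noted, this density equals $p^{(Y)}(y)$. Hence the stationary distribution is unique.

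Finally I would transfer everything to $(X_t)$ through the fixed decreasing homeomorphism $x\mapsto\cos x$ of $(0,\pi)$ onto $(-1,1)$. Since $Y_t=\cos X_t$ pathwise, the change of variables $y=\cos x$ turns $p^{(Y)}(y)\,dy$ into $p^{(Y)}(\cos x)\sin x\,dx$ on $(0,\pi)$, giving the claimed $p^{(X)}$; uniqueness carries over because the map is a bijection, and reversibility carries over because time reversal commutes with applying a fixed map to the paths, i.e.\ $(X_t)=(\arccos Y_t)$ together with reversibility of $(Y_t)$ forces reversibility of $(X_t)$. I expect the only step needing real care to be the uniqueness argument --- specifically the uniform-in-$x$ decay of the $n\ge1$ tail of (\ref{pY'}) and the attendant automatic absolute continuity of any stationary law --- though it is routine given (\ref{norm-n'})--(\ref{supernorm0'}); as an alternative to using the transition density for stationarity and reversibility one could instead verify the one-dimensional detailed-balance criterion $\big((1-x^2)\,m(x)\big)'=-\delta x\,m(x)$ directly from (\ref{dYt2'}), which $m=p^{(Y)}$ solves, but that route does not yield uniqueness.
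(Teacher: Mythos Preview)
Your proof is correct and follows essentially the same route as the paper: identify the $n=0$ term of (\ref{pY'}) as $p^{(Y)}(y)$ using $C_0^{(\alpha)}\equiv 1$, use orthogonality of the Gegenbauer polynomials to check invariance, and read off reversibility from the detailed balance identity $(1-x^2)^{\frac\delta2-1}p^{(Y)}(t,x,y)=(1-y^2)^{\frac\delta2-1}p^{(Y)}(t,y,x)$. Your uniqueness argument via the uniform convergence $p^{(Y)}(t,x,y)\to p^{(Y)}(y)$ is more explicit than the paper's one-line proof of the corollary; in fact the paper establishes exactly this uniform bound immediately afterward (to deduce mixing and ergodicity in Corollary~\ref{ergodic}), so you have simply folded that step in earlier.
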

\begin{proof} This follows from the previous proposition and the orthogonality of $C_n^{(\frac{\delta}2-\frac 12)}$ w.r.t.\ $\langle\cdot\rangle_{\frac\delta 2-1}$. Note that $C_0^{(\frac{\delta}2-\frac 12)}\equiv 1$ and $(1-x^2)^{\frac\delta 2-1}p^{(Y)}(t,x,y)=(1-y^2)^{\frac\delta 2-1}p^{(Y)}(t,y,x)$.
\end{proof}


Note that $p^{(Y)}(y)$ is also  the term for $n=0$ in (\ref{pY'}). Using (\ref{norm-n'}) and (\ref{supernorm0'}), we see that there is a constant $C$ depending on $\delta$ such that
  \BGE | p^{(Y)}(t,x,y)- p^{(Y)}(y)|\le C e^{-\frac \delta 2 t},\quad x,y\in[-1,1].\EDE
  Thus, $p^{(Y)}(t,x,y)\to p^{(Y)}(y)$ as $t\to\infty$ uniformly in $x,y\in[-1,1]$. So we obtain the following corollary.
\begin{Corollary}
  Let $\delta\ge 2$. Then the stationary processes $(Y_t)$ and $(X_t)$ are mixing, and so are ergodic. \label{ergodic}
\end{Corollary}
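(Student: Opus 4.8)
The plan is to deduce mixing of the stationary process $(Y_t)$ directly from the uniform exponential convergence $|p^{(Y)}(t,x,y)-p^{(Y)}(y)|\le Ce^{-\delta t/2}$ recorded just above, and then transport the conclusion to $(X_t)$ by a change of variables. Recall that a stationary process is mixing exactly when $\EE[F\cdot(G\circ\theta_t)]\to\EE[F]\,\EE[G]$ as $t\to\infty$ for all square-integrable functionals $F,G$ of the process, where $(\theta_t)$ denotes the time shift, and that it suffices to check this on a dense class of $F,G$. I would take $F,G$ to be bounded cylinder functionals, $F=f(Y_{s_1},\dots,Y_{s_k})$ and $G=g(Y_{u_1},\dots,Y_{u_m})$, which are dense in $L^2$. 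Mixing will then immediately give ergodicity: if $A$ is shift-invariant, $\PP(A)=\PP(A\cap\theta_t A)\to\PP(A)^2$, so $\PP(A)\in\{0,1\}$.

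First I would reduce the correlation $\EE[F\cdot(G\circ\theta_t)]$ to a two-point quantity. For $t$ large, $G\circ\theta_t$ depends only on the process at the times $u_j+t$, all of which exceed $a:=\max_i s_i$; put $b:=\min_j u_j+t$, so $b-a\to\infty$. Since $(Y_t)$ is Markov, conditionally on $(Y_a,Y_b)$ the functionals $F$ (measurable with respect to the process up to $a$) and $G\circ\theta_t$ (measurable with respect to the process from $b$ on) are independent, so that
$$\EE[F\cdot(G\circ\theta_t)]=\int_{-1}^1\!\!\int_{-1}^1 \phi(x)\,\psi(y)\,p^{(Y)}(x)\,p^{(Y)}(b-a,x,y)\,dx\,dy,$$
where $\phi(x)=\EE[F\mid Y_a=x]$ and $\psi(y)=\EE[G\mid Y_{u_1}=y]$; here time-homogeneity is used so that $\psi$ is free of $t$, and $\phi,\psi$ are bounded by $\|F\|_\infty,\|G\|_\infty$. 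Then I would insert the uniform estimate: replacing $p^{(Y)}(b-a,x,y)$ by $p^{(Y)}(y)$ changes the integral by at most $2C\|F\|_\infty\|G\|_\infty e^{-\delta(b-a)/2}\to 0$, whence $\EE[F\cdot(G\circ\theta_t)]\to(\int_{-1}^1\phi\,p^{(Y)})(\int_{-1}^1\psi\,p^{(Y)})=\EE[F]\,\EE[G]$. A routine $\varepsilon/3$-approximation, using that the shift preserves the law so $\|G\circ\theta_t-G'\circ\theta_t\|_2=\|G-G'\|_2$, extends this to all $F,G\in L^2$, giving mixing, hence ergodicity, of $(Y_t)$.

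For $(X_t)$ I would observe that $\delta\ge2$ forces $T=\infty$ and $X_t\in(0,\pi)$ for all $t$, and that $Y_t=\cos X_t$ with $\cos\colon(0,\pi)\to(-1,1)$ a homeomorphism; hence $(\omega_s)\mapsto(\cos\omega_s)$ is a shift-commuting measure-preserving isomorphism between the stationary processes $(X_t)$ and $(Y_t)$. Since mixing and ergodicity are invariants of such an isomorphism, $(X_t)$ is mixing and ergodic as well.

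I do not expect a genuine obstacle: the substance is the already-established uniform exponential convergence of transition densities, and the rest is bookkeeping. The only points needing care are the Markov-property reduction — conditioning at the two boundary times $a$ and $b$ and verifying via time-homogeneity that the "future" factor $\psi$ is genuinely independent of $t$ — and the standard density argument that lets one pass from cylinder functionals to arbitrary $L^2$ functionals (equivalently, from cylinder events to the full path-space $\sigma$-algebra).
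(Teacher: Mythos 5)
Your argument is correct and follows essentially the same route as the paper, whose proof consists precisely of the uniform estimate $|p^{(Y)}(t,x,y)-p^{(Y)}(y)|\le Ce^{-\frac\delta2 t}$ followed by the (unstated) standard deduction that uniform convergence of transition densities to the stationary density implies mixing; you have simply supplied the routine Markov/cylinder-functional bookkeeping that the paper omits, and the transfer to $(X_t)$ via $\cos$ is as intended.
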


\vskip 4mm
We now study the transition densities in the case $\delta<2$. Recall that $[0,T)$ is the time interval for $(Y_t)$.
We say that $\til p^{(Y)}(t,x,y)$ is the transition density of $(Y_t)$ if for any $f\in C([-1,1])$,
\BGE\EE_x[{\bf 1}_{T>t} f(Y_t)]=\int_{-1}^1 f(y) \til p^{(Y)}(t,x,y)dy,\quad x,y\in(-1,1),t>0.\label{killed'}\EDE
The integral $\int_{-1}^1 \til p(t,x,y)dy=\EE_x[T>t]$ may be less than $1$.

We will need functions, which solve (\ref{PDE'}) for $x\in(-1,1)$ and vanish at $x\in\{-1,1\}$. It is easy to see that if $p(x)=(1-x^2)^{1-\frac\delta 2}q(x)$, then $p(x)$ solves (\ref{ODE'}) in $(-1,1)$ iff $q(x)$ solves
$$ ({1-x^2}) q''(x)-(4- {\delta }) xq'(x)-(2\lambda+2-\delta ) q(x)=0,\quad -1<x<1.$$
If $\lambda=-\frac 12(n+1)(n+2-\delta)$, $n\in\N\cup\{0\}$, the above equation has a solution $C_n^{(\frac 32-\frac\delta2)}$.
Thus, $$\til p_n(t,x):=(1-x^2)^{1-\frac\delta 2}C_n^{(\frac 32-\frac\delta2)} e^{-\frac 12(n+1)(n+2-\delta)t}$$
solves (\ref{PDE'}) for $x\in(-1,1)$ and vanishes at $x\in\{-1,1\}$.

Note that $C_n^{(\frac 32-\frac\delta2)}$, $n\in\N\cup\{0\}$, form a complete orthogonal system w.r.t.\ $\langle \cdot\rangle_{1-\frac\delta 2}$. So we define
\BGE \til p^{(Y)}(t,x,y)=\sum_{n=0}^\infty  \frac{(1-x^2)^{1-\frac\delta 2} C_n^{(\frac 32-\frac\delta2)}(x) C_n^{(\frac 32-\frac\delta2)}(y)}{\int_{-1}^1 (1-y^2)^{1-\frac\delta 2} C_n^{(\frac 32-\frac\delta2)}(y)^2dy }\exp(-\frac 12(n+1)(n+2-\delta)t).\label{tilp'}\EDE
Let $P$ be a polynomial, and $a_n=\langle P, C_n^{(\frac 32-\frac\delta2)}\rangle_{1-\frac\delta2}/\langle C_n^{(\frac 32-\frac\delta2)}, C_n^{(\frac 32-\frac\delta2)}\rangle_{1-\frac\delta2}$, $n\in\N\cup\{0\}$. Then all but finitely many $a_n$'s are zero, and $P=\sum_{n=0}^\infty a_n C_n^{(\frac 32-\frac\delta2)}$. Define $\til f(t,x)=\sum_{n=0}^\infty a_n \til p_n(t,x)$. Then $\til f(t,x)$ solves (\ref{PDE'}) for $x\in(-1,1)$, vanishes at $x\in\{-1,1\}$, and satisfies $\til f(0,x)=f(x):=(1-x^2)^{1-\frac\delta 2}P(x)$. Fix $t_0>0$. Define $\til M_{t}:=\til f(t_0-t,Y_t)$, $0\le t\le T$. Then $\til M_t$ is a martingale with $\til M_T=0$. The optional stoping theorem implies that
$$ \EE_{x_0}[{\bf 1}_{T>t_0}\til f(Y_{t_0})]=\EE_{x_0}[M_{T\wedge t_0}]=M_0=\til f(t_0,x_0)=\int_{-1}^1 f(y)\til p^{(Y)}(t_0,x_0,y)dy.$$
Thus (\ref{killed'}) holds for $f(x)=(1-x^2)^{1-\frac\delta 2}P(x)$. Then a denseness argument show that (\ref{killed'}) holds for any $f\in C([-1,1])$. So we obtain the following proposition.

\begin{Proposition}
   Let $\delta<2$. The transition density of $(Y_t)$ is $\til p^{(Y)}(t,x,y)$ given by (\ref{tilp'}), and the transition density of $(X_t)$ is $\til p^{(Y)}(t,\cos x,\cos y)\sin y$. 
\end{Proposition}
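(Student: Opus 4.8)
The plan is to imitate the proof of Proposition \ref{densityY'}, the one new feature being that the diffusion $(Y_t)$ is now killed at the finite time $T$ where it hits $\{-1,1\}$, so one must build the kernel out of eigenfunctions of the generator that vanish at $\pm1$ --- exactly the functions $\til p_n(t,x)=(1-x^2)^{1-\frac\delta 2}C_n^{(\frac 32-\frac\delta 2)}(x)e^{-\frac12(n+1)(n+2-\delta)t}$ set up above. All the analytic ingredients are already in place; the work is to assemble them, to extend from a dense class of test functions to all of $C([-1,1])$, and to transfer the conclusion from $(Y_t)$ to $(X_t)$.

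First I would reduce to the statement for $(Y_t)$. Since $Y_t=\cos(X_t)$ with $X_t\in(0,\pi)$, for $g\in C([0,\pi])$ one has $\EE_x[{\bf 1}_{T>t}g(X_t)]=\EE_{\cos x}[{\bf 1}_{T>t}g(\arccos Y_t)]$, and the substitution $y=\cos y'$ turns $\int_{-1}^1 g(\arccos y)\,\til p^{(Y)}(t,\cos x,y)\,dy$ into $\int_0^\pi g(y')\,\til p^{(Y)}(t,\cos x,\cos y')\sin y'\,dy'$, which is precisely the asserted transition density of $(X_t)$. For $(Y_t)$, the discussion preceding the statement shows that (\ref{killed'}) holds whenever $f(x)=(1-x^2)^{1-\frac\delta 2}P(x)$ with $P$ a polynomial: expand $f$ as a (finite) combination in the complete orthogonal system $\{(1-x^2)^{1-\frac\delta 2}C_n^{(\frac 32-\frac\delta 2)}\}_n$, form the matching finite combination $\til f(t,x)$ of the $\til p_n$'s, which solves (\ref{PDE'}) on $(-1,1)$, vanishes at $\pm1$, is bounded on $[0,t_0]\times[-1,1]$ (using $1-\frac\delta 2>0$), and has $\til f(0,\cdot)=f$; then, by It\^o and (\ref{dYt2'}), $\til M_t=\til f(t_0-t,Y_t)$ is a bounded martingale on $[0,T\wedge t_0]$ with $\til M_T=0$ on $\{T\le t_0\}$, so optional stopping, together with the uniform convergence of the series (\ref{tilp'}) for $t$ bounded away from $0$ and orthogonality, gives $\EE_{x_0}[{\bf 1}_{T>t_0}f(Y_{t_0})]=\til f(t_0,x_0)=\int_{-1}^1 f(y)\,\til p^{(Y)}(t_0,x_0,y)\,dy$.

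To remove the restriction on $f$, I would compare the finite measures $\mu_1(dy)=(1-y^2)^{1-\frac\delta 2}\,\EE_{x_0}[{\bf 1}_{T>t_0}{\bf 1}_{Y_{t_0}\in dy}]$ and $\mu_2(dy)=(1-y^2)^{1-\frac\delta 2}\,\til p^{(Y)}(t_0,x_0,y)\,dy$ on $[-1,1]$: they integrate every polynomial identically, hence coincide by Weierstrass approximation and the Riesz representation theorem; since $(1-y^2)^{1-\frac\delta 2}>0$ on $(-1,1)$ while neither underlying measure charges $\{\pm1\}$ (for $\mu_1$, because $Y_{t_0}\in(-1,1)$ on $\{T>t_0\}$), cancelling the factor $(1-y^2)^{1-\frac\delta 2}$ gives (\ref{killed'}) for all $f\in C([-1,1])$, and likewise its $(X_t)$-analogue. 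The step most worth checking carefully is the input used in the second paragraph --- that $p(x)=(1-x^2)^{1-\frac\delta 2}q(x)$ solves (\ref{ODE'}) exactly when $q$ solves the Gegenbauer equation of index $\frac 32-\frac\delta 2$ with eigenvalue $-\frac12(n+1)(n+2-\delta)$, and that the series (\ref{tilp'}) converges uniformly for $t$ bounded below --- but both are recorded above, so the remaining argument is routine.
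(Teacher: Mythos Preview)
Your proof is correct and follows essentially the same route as the paper: the martingale argument for test functions of the form $f(x)=(1-x^2)^{1-\frac\delta2}P(x)$ with $P$ a polynomial, followed by a density argument to reach all of $C([-1,1])$. The paper simply writes ``a denseness argument'' for the last step; your version, which multiplies the law of $Y_{t_0}$ on $\{T>t_0\}$ by $(1-y^2)^{1-\frac\delta2}$ and invokes Weierstrass approximation on polynomials, is a clean and valid way to make that step precise.
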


Note that the term for $n=0$ in (\ref{tilp'}) is
\BGE \til p^{(Y)}(t,x):= \frac{(1-x^2)^{1-\frac\delta 2}}{\int_{-1}^1 (1-y^2)^{1-\frac\delta 2}dy}\, e^{-\frac 12(2-\delta)t}.\EDE
 Using (\ref{norm-n'}) and (\ref{supernorm0'}), we see that there is a constant $C$ depending on $\delta$ such that
  \BGE |\til p^{(Y)}(t,x,y)-\til p^{(Y)}(t,x)|\le C  e^{- (3-\delta)t},\quad x,y\in(-1,1).\EDE
Since $\PP_x^{(Y)}[T>t]=\int_{-1}^1 \til p^{(Y)}(t,x,y)dy$, using the fact that $C_1^{(\alpha)}(y)=2\alpha y$ is odd we see that there is a constant $C$ depending on $\delta$ such that
 \BGE |\PP_x^{(Y)}[T>t]-2\til p^{(Y)}(t,x)|\le C  e^{-\frac 32(4-\delta)t},\quad x\in(-1,1).\EDE
So we obtain the following corollary. 

\begin{Corollary}
  Let $\delta<2$, and $T$ be the lifetime for $(Y_t)$ or $(X_t)$. Then for any initial values, $\PP^{(Y)}[T>t]$ and $\PP^{(X)}[T>t]$ are bounded above by a constant depending on $\delta$ times $e^{-\frac 12(2-\delta)t}$, and for any $a<\frac 12(2-\delta)$, $\EE^{(Y)}[e^{aT}]$ and $\EE^{(X)}[e^{aT}]$ are finite.
\end{Corollary}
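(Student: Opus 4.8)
The plan is to obtain both assertions directly from the two facts recorded immediately above the statement. Recall that the $n=0$ term in the series for $\PP_x^{(Y)}[T>t]$ equals $2\til p^{(Y)}(t,x)$, where $\til p^{(Y)}(t,x)=(1-x^2)^{1-\frac\delta2}e^{-\frac12(2-\delta)t}\big/\int_{-1}^1(1-y^2)^{1-\frac\delta2}\,dy$, and that the $n=1$ term drops out upon integrating in $y$ (oddness of $C_1^{(\frac32-\frac\delta2)}$), leaving the uniform estimate $\bigl|\PP_x^{(Y)}[T>t]-2\til p^{(Y)}(t,x)\bigr|\le Ce^{-\frac32(4-\delta)t}$ for all $x\in(-1,1)$, with $C=C(\delta)$.

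First I would bound $2\til p^{(Y)}(t,x)$ uniformly in the initial value. Since $\delta<2$, the exponent $1-\frac\delta2$ is positive, so $(1-x^2)^{1-\frac\delta2}\le 1$ for every $x\in(-1,1)$; hence $2\til p^{(Y)}(t,x)\le c_\delta e^{-\frac12(2-\delta)t}$ with $c_\delta:=2\big/\int_{-1}^1(1-y^2)^{1-\frac\delta2}\,dy$. Next, since $\frac32(4-\delta)-\frac12(2-\delta)=5-\delta>0$, we have $e^{-\frac32(4-\delta)t}\le e^{-\frac12(2-\delta)t}$ for every $t\ge0$. The triangle inequality then gives $\PP_x^{(Y)}[T>t]\le(c_\delta+C)e^{-\frac12(2-\delta)t}$ for all $t\ge0$ and all $x\in(-1,1)$, which is the tail bound for $(Y_t)$. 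For $(X_t)$ I would simply note that $Y_t=\cos(X_t)$ and $\cos$ is a diffeomorphism of $(0,\pi)$ onto $(-1,1)$, so the two processes have the same lifetime $T$ and $\PP_x^{(X)}[T>t]=\PP_{\cos x}^{(Y)}[T>t]$; the same bound applies.

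For the exponential moment, I would use $e^{aT}=1+\int_0^\infty ae^{at}{\bf 1}_{\{t<T\}}\,dt$ for $a>0$, take expectations, and apply Tonelli's theorem to get $\EE^{(Y)}[e^{aT}]=1+a\int_0^\infty e^{at}\,\PP^{(Y)}[T>t]\,dt$. Inserting the tail bound yields $\EE^{(Y)}[e^{aT}]\le 1+a(c_\delta+C)\int_0^\infty e^{(a-\frac12(2-\delta))t}\,dt$, and the integral converges precisely when $a<\frac12(2-\delta)$; for $a\le0$ finiteness is trivial since $T>0$ forces $e^{aT}\le1$. The identical argument with the same tail bound gives $\EE^{(X)}[e^{aT}]<\infty$. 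I do not expect a real obstacle here, since the spectral expansion, its uniform convergence, and the $O(e^{-\frac32(4-\delta)t})$ error are already established; the only points requiring a word of care are the positivity of $1-\frac\delta2$, the exponent comparison $\frac32(4-\delta)>\frac12(2-\delta)$, and the identification of the lifetimes of $(X_t)$ and $(Y_t)$.
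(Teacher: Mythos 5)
Your proposal is correct and follows exactly the route the paper intends: the corollary is stated as an immediate consequence of the two displayed estimates preceding it (the bound $2\til p^{(Y)}(t,x)\le c_\delta e^{-\frac12(2-\delta)t}$ from $(1-x^2)^{1-\frac\delta2}\le 1$, the $O(e^{-\frac32(4-\delta)t})$ error term after the odd $n=1$ term integrates to zero, and the identification $Y_t=\cos(X_t)$ of the two lifetimes), and your integration of the tail bound to get the exponential moment is the standard completion. No gaps.
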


\no{\bf Remarks.}
\begin{enumerate}
\item Gregory Lawler has a method to prove Corollary \ref{Xtstationary'} without finding the transition density (Appendix A, \cite{Law-Bessel}). The idea is to use Girsanov's theorem to compare a radial Bessel process of dimension $\delta\ge 2$ with a Brownian motion. His method also works for some functions other than $\frac{\delta-1}2\cot(x)$.
\item We may define a radial Bessel process $(X_t)$ with dimension $\delta\in[0,2)$ such that the time interval is $[0,\infty)$. First, we define $(Y_t)$ to be the solution of the SDE: $dY_t=-q(Y_t)dB(t)-\frac\delta2 Y_t dt$ with $Y_0\in(-1,1)$, where $q(x)=\sqrt{(1-x^2)\vee 0}$. Since $q$ is H\"older $1/2$ continuous, the existence and uniqueness of the strong solution defined on $[0,\infty)$ follow from Theorems 1.7 and 3.5 in $\S$IX of \cite{RY}. If $\delta\ge 0$, then $(Y_t)$ stays on $[-1,1]$, and so solves (\ref{dYt2'}). Then the process $(X_t)$ is defined by $X_t=\arccos(Y_t)$. Proposition (\ref{densityY'}) and its two corollaries also hold for $\delta\in(0,2)$ because the functions $p_n(t,x,y)$ solve (\ref{PDE'}) for all $x\in\R$. Lawler's argument does not work in this case since Girsanov's theorem does not apply.
\item We may also consider the transition density of the process $(Y_t)$, which solves the SDE
  $$dY_t=-\sqrt{1-Y_t^2}dB(t)-\frac{\delta_+}4(Y_t+1)dt-\frac{\delta_-}4(Y_t-1)dt,\quad Y_0\in(-1,1).$$
If $\delta_+=\delta_-=\delta$, this SDE becomes (\ref{dYt2'}). 
If $\delta_+,\delta_->0$, then $(Y_t)$ stays in $[-1,1]$, and the transition density is given by (\ref{pY'}) revised such that $C_n^{(\frac{\delta} 2-\frac 12)}$ is replaced by the Jacobi polynomial $P_n^{(\frac{\delta_+}2-1,\frac{\delta_-}2-1)}$, the weight $(1-y^2)^{\frac\delta 2-1}$ is replaced by $(1-y)^{\frac{\delta_+}2-1}(1+y)^{\frac{\delta_-}2-1}$, and the number $n+\delta-1$ is replaced by $n+\frac{\delta_++\delta_-}2-1$. Such $(Y_t)$ has a unique stationary distribution with density proportional to $(1-x)^{\frac{\delta_+}2-1}(1+x)^{\frac{\delta_-}2-1}$, and the corresponding stationary process is reversible, mixing and ergodic. 
One may also use the Jacobi polynomials to express the transition density of the process $(Y_t)$ killed after it hits $\{-1,1\}$ in the case $\delta_+$ or $\delta_-$ is less than $2$, which resembles (\ref{tilp'}). Such process $(Y_t)$ was studied in Section 4 of \cite{SS}.
\end{enumerate}

\end{document}